\newtheorem{theorem}{Theorem}
\numberwithin{theorem}{section}
\newtheorem{proposition}[theorem]{Proposition}
\newtheorem{lemma}[theorem]{Lemma}
\newtheorem{corollary}[theorem]{Corollary}
\newtheorem{definition}[theorem]{Definition}
\newtheorem{remark}[theorem]{Remark}
\newtheorem{conjecture}[theorem]{Conjecture}
\newtheorem{example}[theorem]{Example}
\begin{document}
{
\title{Brill-Noether Existence on Graphs via $\mathbb{R}$-Divisors, Polytopes and Lattices\\ \normalsize{Dedicated to Bernd Sturmfels on his 60th Birthday} }
\author{Madhusudan Manjunath\footnote {Part of this work was carried out while the author was visiting IHES, Bures-sur-Yvette and ICTP, Trieste. We thank the generous support and the warm hospitality of these institutes.The author was supported by a MATRICS grant (MTR/2019/000462) of the Department of Science and Technology (DST), India during the course of this work.}}
\maketitle

%%%Contents:
\begin{abstract}

We study Brill-Noether existence on a finite graph using methods from polyhedral geometry and lattices. We start by formulating analogues of the Brill-Noether conjectures (both the existence and non-existence parts) for $\mathbb{R}$-divisors, i.e. divisors with real coefficients, on a graph.  We then reformulate the Brill-Noether existence conjecture for $\mathbb{R}$-divisors on a graph in geometric terms, that we refer to as the covering radius conjecture and we show a weak version, in support of it. Using this, we show an approximate version of the Brill-Noether existence conjecture for divisors on a graph. As applications, we derive upper bounds on the gonality of a graph and its $\mathbb{R}$-divisor analogue. 

%We start with a geometric interpretation of Baker's conjecture on the Brill-Noether existence property on graphs. This interpretation is in terms of covering radii of an arrangement of (affine) lattices with respect to certain polyhedral distance functions.  Via this geometric version,  we prove the conjecture for sufficiently dense graphs and make partial progress on the general case. We then formulate an approach to prove the conjecture in its full generality. This approach is based on a conjectural description of a Delaunay triangulation with respect to certain polyhedral norms of the set of local maxima of the Laplacian lattice with respect to the simplicial distance function. We conclude with a version of the conjecture for  real divisors. 

%{\color{blue} rewrite: abstract}

\end{abstract}
\section{Introduction}
%{\color{red} 24 AUGUST 2020: KEEP SELF-CONTAINMENT IN MIND WHILE READING ALOUD}

Analogies between finite graphs and compact Riemann surfaces are at the interface of several branches of mathematics. For instance, combinatorics, algebraic geometry, complex analysis and probability theory.  Some instances of such analogies are in the work of Sunada on discrete geometric analysis \cite{Sun08}, Bacher, De La Harpe and Nagnibeda \cite{BacLarNag97} and Smirnov on discrete complex analsis \cite{Smi10}.  Baker and Norine in 2007 \cite{BakNor07} took a major step in this direction by developing a Riemann-Roch theory for graphs. This theory is closely related to Riemann-Roch theory in tropical algebraic geometry \cite{GatKer08} and has since inspired numerous applications \cite{CooDraPayRob12}, \cite{Ami14}.
 
In a follow-up to this work on the Riemann-Roch theorem, Baker \cite{Bak08} proposed an analogue of Brill-Noether theory for graphs.  The main goal of Brill-Noether theory is to classify divisors of prescribed degree and rank on a given object (a Riemann surface, an algebraic curve or a graph).  We start by stating one of the two key theorems in the Brill-Noether theory of algebraic curves. Let $X$ be a smooth, proper algebraic curve of genus $g$. Define $\rho(g,r,d)=g-(r+1)(g-d+r)$.

\begin{theorem}{\rm{(\bf Brill-Noether Existence for Curves)}}\label{bnexalg_theo}\cite{Kem71}, \cite{KleLak72}
For any pair of non-negative integers $(r,d)$ such that  $d \leq 2g-2$. If $\rho(g,r,d) \geq 0$, then there exists a divisor on $X$ with degree at most $d$ and rank equal to $r$. 
\end{theorem}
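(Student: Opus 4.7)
The plan is to realize the Brill-Noether locus $W^r_d(X) = \{[L] \in \mathrm{Pic}^d(X) : h^0(X,L) \geq r+1\}$ as a determinantal locus on the Jacobian $\mathrm{Pic}^d(X)$, and then establish non-emptiness via the Thom-Porteous formula combined with Poincar\'e's formula for the theta divisor. Observe first that it suffices to prove $W^r_d(X) \neq \emptyset$: given $L \in W^r_d(X)$ with $h^0(L) > r+1$, subtracting a general point of $X$ from $L$ decreases both the degree and $h^0$ by exactly one, and iterating yields a divisor of degree at most $d$ with rank equal to $r$.

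To set up the degeneracy locus, I would fix an effective divisor $D$ on $X$ of degree $m$ large enough that $H^1(X, L(D)) = 0$ for every $L$ of degree $d$. Choosing a Poincar\'e line bundle $\mathcal{L}$ on $X \times \mathrm{Pic}^d(X)$ and letting $\pi$ denote the projection to the second factor, one obtains the evaluation map
$$\alpha : \pi_{*}(\mathcal{L}(D)) \longrightarrow \pi_{*}(\mathcal{L}(D)/\mathcal{L})$$
of locally free sheaves on $\mathrm{Pic}^d(X)$ of ranks $d+m-g+1$ and $m$ respectively. The cohomology sequence identifies, fibrewise, $h^0(L)$ with $\dim \ker \alpha_{[L]}$, so $W^r_d(X)$ is exactly the locus where $\alpha$ has rank at most $d+m-g-r$.

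The existence theory for degeneracy loci due to Kempf and Kleiman-Laksov then guarantees that such a locus is non-empty provided its expected codimension $(r+1)(g-d+r)$ does not exceed $\dim \mathrm{Pic}^d(X) = g$ — equivalently $\rho(g,r,d) \geq 0$ — and provided the cohomology class predicted by Thom-Porteous is non-zero. Applying Thom-Porteous to $\pi_{*}(\mathcal{L}(D)/\mathcal{L}) - \pi_{*}(\mathcal{L}(D))$ and invoking Poincar\'e's formula on the Jacobian, the fundamental class of $W^r_d(X)$ reduces to
$$\left(\prod_{i=0}^{r} \frac{i!}{(g-d+r+i)!}\right)\cdot \theta^{(r+1)(g-d+r)},$$
where $\theta$ is the class of a theta divisor. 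Since $\theta^g = g!\,[\mathrm{pt}]$, the powers $\theta^k$ are non-zero for $k \leq g$, so this class is non-zero whenever $\rho \geq 0$; hence $W^r_d(X)$ is non-empty.

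The main obstacle is the Chern-class calculation: pushing the Thom-Porteous determinant through Grothendieck-Riemann-Roch to express it purely in terms of $\theta$, and verifying that the resulting numerical coefficient does not vanish, is the technical heart of the argument and is where the geometry of the Jacobian enters essentially. The reduction from $h^0(L) \geq r+1$ to rank exactly $r$ and the setup of the degeneracy locus are comparatively formal; it is the intersection-theoretic input on the abelian variety $\mathrm{Pic}^d(X)$ that supplies the numerical content of the theorem.
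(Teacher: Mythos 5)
Your outline is correct, but note that the paper does not prove this statement at all: Theorem \ref{bnexalg_theo} is quoted as a classical result with the proof deferred to Kempf \cite{Kem71} and Kleiman--Laksov \cite{KleLak72}, and what you have written is precisely a sketch of that standard argument (realizing $W^r_d$ as a degeneracy locus of the evaluation map on $\mathrm{Pic}^d(X)$, computing its class via Thom--Porteous and Poincar\'e's formula, and concluding non-emptiness from the non-vanishing of $\theta^{(r+1)(g-d+r)}$, as in \cite[Chapter VII]{ArbCorGriHar85}). So your proposal coincides with the proof the paper implicitly relies on; the only point deserving emphasis is that one needs the \emph{existence} form of the degeneracy-locus theorem (a non-zero class supported on the locus, not merely the expected-codimension class formula), which is exactly the contribution of \cite{Kem71} and \cite{KleLak72} that you correctly invoke.
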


Furthermore,  if  $\rho(g,r,d) \geq 0$ and $r \geq d-g$, then $\rho(g,r,d)$ is a lower bound for the dimension of the space $W^r_d$ of linear series of degree $d$ and rank at least $r$ on the algebraic curve, see \cite[Chapter 5]{ArbCorGriHar85} for more details.
 The converse to Brill-Noether existence holds (only) for a general algebraic curve and is the content of the non-existence part of Brill-Noether theory \cite{GriHar80}.

%\begin{theorem} {\rm{(\bf Brill-Noether Non-Existence Theorem)}} \label{bnnonexalg_theo}\cite{GriHar80}
%For any pair of non-negative  integers $(r,d)$ such that such that $2g-2 \geq d \geq 0$ and $\rho(g,r,d)<0$, then a general curve of genus $g$ does not have a divisor on $X$ with degree at most $d$ and rank equal to $r$. 
%\end{theorem}

Throughout the paper,  by $G$ we denote an undirected, connected, loop-free, multigraph with $n \geq 2$ vertices, $m$ edges and genus $g=m-n+1 \geq 1$.  Baker formulated analogues of both the existence and non-existence parts of the Brill-Noether theorem. In the existence direction, Baker \cite[Conjecture 3.9, Part 1]{Bak08} conjectured the following.

\begin{conjecture}\label{bakconjv1} {\rm{(\bf Brill-Noether Existence for Graphs)}} Fix two non-negative integers $r,~d$ such that $d \leq 2g-2$. If $\rho(g,r,d) \geq 0$, then there exists a divisor $D$ of degree at most $d$ and rank equal to $r$ on $G$.   \end{conjecture}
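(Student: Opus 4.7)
The plan is to leverage the R-divisor Brill-Noether existence established in this paper, together with a lattice-point extraction argument in $\mathrm{Pic}^d(G) \otimes \mathbb{R}$, to produce an honest integer divisor of degree at most $d$ and rank exactly $r$ whenever $\rho(g,r,d) \geq 0$ and $d \leq 2g-2$.

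\textbf{Step 1 (Range reduction).} Baker--Norine duality gives $\rho(g,r,d) = \rho(g,\, g-d+r-1,\, 2g-2-d)$, and a divisor $D$ has rank $r$ if and only if $K_G - D$ has rank $r-d+g-1$. I would use this to restrict attention to the range $d \leq g-1$, where $r$ is comparatively small. In this range the R-divisor existence theorem of the paper furnishes an R-divisor $\tilde D$ of degree $d$ whose R-rank is at least $r$; more quantitatively, the Brill-Noether locus $W^r_d(G)_{\mathbb{R}} \subseteq \mathrm{Pic}^d(G) \otimes \mathbb{R}$ decomposes as a finite union of rational convex polytopes of total dimension at least $\rho(g,r,d)$.

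\textbf{Step 2 (Lattice-point extraction).} The lattice $\mathrm{Pic}^d(G)$ sits inside $\mathrm{Pic}^d(G) \otimes \mathbb{R}$ with covolume equal to the number of spanning trees of $G$, measured with the Laplacian inner product. Applying a Minkowski-type theorem to a maximal cell $P$ of $W^r_d(G)_{\mathbb{R}}$ of dimension at least $\rho(g,r,d)$, I would show that the Laplacian volume of $P$ dominates the product of the successive minima of the sublattice of $\mathrm{Pic}^d(G)$ parallel to the affine hull of $P$, producing an integer class in $W^r_d(G)_{\mathbb{R}}$. The quantitative input is the combination of the Brill-Noether dimension bound from Step 1 with an explicit upper bound on the short vectors of $\mathrm{Prin}(G)$ coming from spanning-tree enumerations.

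\textbf{Step 3 (Rank preservation).} The main obstacle is that an integer class inside $W^r_d(G)_{\mathbb{R}}$ has only R-rank at least $r$ a priori; the integer Baker-Norine rank can be strictly smaller if the class lies on the boundary of the rank-$r$ stratum. To close this gap I would show, using Dhar's burning algorithm and the polyhedral decomposition by reduced divisors, that the integer rank coincides with the R-rank on the relative interior of every maximal rank-$r$ cell, and then upgrade Step 2 so that its output lands in such a relative interior. This upgrade is the heart of the argument: it requires volume estimates sensitive to the distance from the boundary facets, together with an averaging argument over translates by short principal R-divisors to absorb the loss. I expect this last piece, which mirrors the classical subtlety separating the geometric existence results of Kempf and Kleiman--Laksov from their purely numerical statement, to be where the bulk of the work lies and will most likely dictate which auxiliary hypotheses on $G$ (if any) are needed to carry the argument through in full generality.
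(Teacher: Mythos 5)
The statement you are addressing is a conjecture that the paper explicitly leaves open (it is known only for genus at most $5$, by Atanasov--Ranganathan); the paper itself establishes only an approximate version (Theorem \ref{bnapprox_intro}, with a rank error of up to $2n-2$) and the exact case $d=g-1$ for sufficiently dense graphs (Theorem \ref{bndenseeq_theo}). Your proposal follows the same broad strategy as the paper --- pass to $\mathbb{R}$-divisors, then extract an integer point --- but the two inputs you treat as available are not. First, the $\mathbb{R}$-divisor existence theorem is proved only for dense multigraphs ($g>n^2$); for a general graph one only gets the weakened condition $\tilde{\rho}(g,r,d)=g-\Gamma(r+1)(g-d+r)\geq 0$ involving the stretch factor $\Gamma$, so your Step 1 already fails for sparse graphs (e.g. $3$-regular graphs), which are precisely the hard case. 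Second, the claim that $W^r_d(G)_{\mathbb{R}}$ decomposes into rational polytopes of total dimension at least $\rho(g,r,d)$ is nowhere established and is itself a strong open assertion; the paper's actual mechanism is a covering-radius lower bound for $\mathcal{N}_G$ with respect to $P_{1,\lambda}$, which produces a single real point far from $\mathcal{N}_G$, not a $\rho$-dimensional cell, so there is nothing for the Minkowski-type argument of Step 2 to act on.

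Moreover, Step 3 attacks the wrong difficulty. For a divisor with integer coefficients the $\mathbb{R}$-rank coincides with the Baker--Norine rank by definition (both equal ${\rm min}_{\nu\in\mathcal{N}_G}{\rm deg}^{+}(D-\nu)-1$), so an integer class of $\mathbb{R}$-rank at least $r$ automatically has integer rank at least $r$, and rank exactly $r$ then follows by subtracting points one at a time. The genuine obstruction is quantitative and lives in Step 2: rounding a real point to a lattice point can move it by up to $n/2$ in the $\ell_1$-norm (Corollary \ref{intcov_cor}), which translates into a rank loss of order $n$; this is exactly why the paper obtains only the approximate version and why the exact statement survives only when $g$ is large compared to $n^2$. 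Your proposal contains no idea for avoiding this loss, and without one the argument at best reproduces Theorem \ref{bnapprox_intro} rather than proving the conjecture.
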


Both Theorem \ref{bnexalg_theo} and Conjecture \ref{bakconjv1} hold for $d>2g-2$ and are an immediate consequence of the corresponding Riemann-Roch theorems. The core of Brill-Noether theory is in the range $2g-2 \geq d \geq 0$ and we will focus on this for the rest of the paper.  

We refer to \cite{Bak08}, \cite{CooDraPayRob12} for Brill-Noether non-existence for graphs. The Brill-Noether existence conjecture for graphs remains wide open except for graphs of small genus. Atanasov and Ranganathan, 2018 \cite{AtaRan18} proved the conjecture for graphs of genus at most 5 and constructed families of graphs with increasing genus where the existence conjecture holds in rank one.  In a related direction,  Cools and Panizzut 2017 \cite{CooPan17} computed the gonality sequence of complete graphs. Their work builds on the algorithm of Cori and Le Borgne \cite{CorLeb16} for ranks of divisors on complete graphs.  Panizzut 2017 \cite{Pan17} studied the gonality of  graphs obtained from the complete graph $K_n$ by either deleting a small number of edges (at most $n-2$) or by deleting the edges forming a complete (sub)graph $K_h$ for $h<n$.

Baker \cite{Bak08} also considered analogues of the existence and non-existence theorems for metric graphs (also known as abstract tropical curves). Roughly speaking, a metric graph is a  graph with non-negative real edge lengths. For metric graphs, both the existence and non-existence parts are better understood. In the same paper, Baker deduced an analogue of the Brill-Noether theorem for metric graphs and tropical curves) \cite[Theorem 3.20]{Bak08}  using the corresponding theorem for algebraic curves (Theorem \ref{bnexalg_theo}) and the specialisation lemma \cite[Lemma 2.8]{Bak08}.  An important difference between graphs and metric graphs is that divisors on metric graphs can have support at non-vertices, i.e. at the interior of the edges whereas divisors on graphs can only have support at the vertices. This creates  a complication in applying a similar method to proving Brill-Noether existence for graphs. Even in the case of metric graphs a ``combinatorial" proof (that does not rely on the existence theorem for curves) would be of significant interest \cite[Remark 3.13]{Bak08}.  We refer to the recent work of Draisma and Vargas \cite{DraVer19} for a combinatorial proof of the case $r=1$ of Brill-Noether existence for metric graphs. 
Caporaso \cite{Cap11} claimed a proof of the existence conjecture for graphs using algebro-geometric methods. However, this proof is known to contain a gap, see \cite[Footnote 5]{BakJen16}.

%{\color{blue} Draisma and Vargas \cite{} give a combinatorial proof of fact that the tree gonality of a metric graph is upper bounded by $\lceil g/2 \rceil+1$. The tree gonality is a variant of gonality defined as the minimum degree of a harmonic morphism from the metric graph to a metric tree.}

 Baker also made a conjecture on the non-existence analogue for metric graphs that was proven by Cools, Draisma, Payne and Robeva \cite{CooDraPayRob12} and using this, they gave an alternative proof of the corresponding statement for algebraic curves.  

%{\color{blue} mention the metric graphs counterpart}
%One of main difficulties in tackling the existence conjecture seems to be the lack of general methods to construct (or even prove the existence of) divisors with prescribed degree and ``large'' rank on a graph. In this paper, we introduce methods in this direction using polyhedral geometry and lattices. 

A key change in perspective in the current work is to study analogues of Brill-Noether theory, particularly the existence part, for $\mathbb{R}$-divisors on graphs, i.e. divisors with real coefficients, on a graph. We posit that this analogue is both an interesting topic in its own right and is also a useful tool to tackle the existence conjecture for graphs. This analogue for $\mathbb{R}$-divisors  allows the use of geometric methods similar in flavour to the geometry of numbers \cite{Cas71} for Brill-Noether type questions on graphs. We start by formulating a version of the existence conjecture for $\mathbb{R}$-divisors. 

%To the best of our knowledge, this is a novel approach to the Brill-Noether existence conjecture on graphs and for the first time, we confirm the conjecture for $r>1$ on graphs of arbitrarily high genus. 

\begin{conjecture}\label{bakconjreal_intro} {\rm (\bf {Brill-Noether Existence for  $\mathbb{R}$-Divisors on Graphs})} Let $\rho(g,r,d)=g-(r+1)(g-d+r)$. Fix two non-negative real numbers $r,~d$  such that $d \leq 2g-2$.  If $\rho(g,r,d) \geq 0$, then there exists an $\mathbb{R}$-divisor of degree at most $d$ and rank equal to $r$ on $G$.   \end{conjecture}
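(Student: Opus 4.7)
The plan is to reformulate the existence question as a nonemptiness question for a certain polyhedral locus. Following the Baker--Norine framework, I would define the rank of an $\mathbb{R}$-divisor $D$ by declaring $\mathrm{rank}_\mathbb{R}(D) \geq r$ when for every $\mathbb{R}$-effective divisor $E$ of degree $r$ the class $[D-E]$ admits an $\mathbb{R}$-effective representative. Writing $\mathrm{Eff}$ for the set of $\mathbb{R}$-effective classes modulo chip-firing and $\mathrm{Eff}_r \subset \mathrm{Eff}$ for its degree-$r$ stratum, the Brill--Noether locus becomes
\[
W^{r}_{d,\mathbb{R}} \;=\; \bigcap_{E \in \mathrm{Eff}_r} \bigl(E + \mathrm{Eff}\bigr) \;\cap\; \{\deg \le d\},
\]
and the conjecture asserts that $W^{r}_{d,\mathbb{R}}$ is nonempty whenever $\rho(g,r,d) \geq 0$.

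The first step is to replace the infinite intersection by a finite one via reduced divisor theory: the condition $[D-E] \in \mathrm{Eff}$ is equivalent to the $v$-reduced divisor of $D-E$ being coordinatewise nonnegative, which depends only on the combinatorial type of the reduced divisor for $E$ varying in $\mathrm{Eff}_r$. Stratifying $\mathrm{Eff}_r$ by these combinatorial types reduces the problem to intersecting finitely many rational polyhedra, each with an explicit facet description in terms of cuts of $G$ and the fundamental parallelotope of the Laplacian lattice.

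The core step is a dimension count. Each obstruction coming from an effective class $E$ is heuristically expected to impose codimension $g-d+r$, mirroring the classical computation that $h^{0}(K - D + E) = g-d+r$ generically. Under a genericity hypothesis on $r+1$ choices of $E$, the polyhedron $W^{r}_{d,\mathbb{R}}$ should have codimension $(r+1)(g-d+r)$ in the ambient polytope of $\mathbb{R}$-effective divisors, hence dimension at least $g - (r+1)(g-d+r) = \rho(g,r,d)$. When $\rho \geq 0$ this forces nonemptiness, and one could extract an explicit divisor either by a Minkowski/Siegel-style lattice argument or by a continuity argument in which one continuously deforms a high-degree divisor of large rank down along a path of decreasing degree until rank drops to exactly $r$.

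The principal obstacle, I anticipate, is making the dimension count rigorous in the absence of the determinantal machinery available for algebraic curves. Excess intersection can occur when the graph has special combinatorial structure -- for instance, when the image of the Laplacian is poorly distributed relative to $\mathrm{Eff}$ -- and ruling this out requires fine control over the lattice geometry of the chip-firing moves. I expect that a genericity hypothesis, captured in the paper by the \emph{sufficiently dense} condition, is precisely what allows one to achieve the expected codimension cleanly, while sparse or highly symmetric graphs require either an approximation from the dense case or a perturbation argument allowing $\rho$ to be replaced by $\rho + O(1)$, which is consistent with the weak and approximate statements that the abstract advertises as the actual results of the paper.
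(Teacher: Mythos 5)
The statement you are trying to prove is a \emph{conjecture} in the paper: the author does not prove it in general, but only for dense multigraphs ($g>n^2$, Theorem \ref{bndense_intro}) and in a weakened form with the stretch factor $\Gamma$ inserted into $\rho$ for arbitrary graphs (Theorem \ref{bnreal_intro}). So a complete argument here would be a genuine advance, and indeed your proposal has a gap exactly where one would expect it. The core step --- ``each obstruction imposes codimension $g-d+r$, so $W^{r}_{d,\mathbb{R}}$ has codimension $(r+1)(g-d+r)$, and $\rho\geq 0$ forces nonemptiness'' --- does not work for intersections of polyhedra. For algebraic curves, nonemptiness of $W^r_d$ when $\rho\geq 0$ is \emph{not} a consequence of the dimension count alone: it comes from the determinantal structure of $W^r_d$, whose fundamental class on the Jacobian is a specific nonzero multiple of a power of the theta divisor (Kempf, Kleiman--Laksov). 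An intersection of finitely many rational polyhedra of ``expected'' codimension $(r+1)(g-d+r)\leq g$ can perfectly well be empty; nothing in the polyhedral setting plays the role of the positivity of the degeneracy-locus class. You flag this yourself as the ``principal obstacle,'' but the proposal supplies no substitute mechanism, so the argument does not close. (A secondary issue: your definition of rank, ``$\mathrm{rank}_{\mathbb{R}}(D)\geq r$ iff $|D-E|\neq\emptyset$ for all effective $E$ of degree $r$,'' is the paper's \emph{modified} rank $\tilde r$; the actual rank is $r(D)=\tilde r(D+\sum_v(v))$, and this shift by $n$ is needed for the Riemann--Roch and degree-plus formulas to hold with the stated constants.)

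The ``Minkowski/Siegel-style lattice argument'' you mention in passing is in fact the route the paper takes, and it is worth seeing why it only yields the dense case. Using the geometric rank formula (Proposition \ref{geomrank_prop}), a divisor of degree $d$ and rank $\geq r$ exists iff some point of $H_{g-1}$ is at distance $\geq (r+1)/n$ from the set $\mathcal{N}_G$ of non-special divisors in the polyhedral metric induced by $P_{1,\lambda}$ with $\lambda=(g-d+r)/(r+1)$; so the conjecture becomes a lower bound $\sqrt{g/\lambda}/n$ on a covering radius (Conjecture \ref{geomver_conj}). The paper then computes the covering radius of $\mathcal{N}_G$ with respect to the standard simplices exactly ($m/n$, Proposition \ref{covcritsimpcru_prop}) and converts it to a lower bound for $P_{1,\lambda}$ via the norm conversion inequality (Corollary \ref{covradlb_cor}); the resulting bound beats $\sqrt{g/\lambda}/n$ for all $\lambda\in[1/g,g]$ precisely when $g>n^2$. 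This is where the density hypothesis enters --- not as a genericity assumption ruling out excess intersection, but as the condition under which a lossy norm comparison between $\triangle$, $\bar\triangle$ and $P_{1,\lambda}$ is still strong enough. For sparse graphs the conversion loses too much, which is why the paper only obtains the $\Gamma$-weakened statement there.
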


\begin{remark}
\rm
We conjecture a stronger version that if in addition $g-d+r \geq 1$, then $G$ has an $\mathbb{R}$-divisor of degree equal to $d$ and rank equal to $r$. We refer to Subsection \ref{bnreal_subsect} for more details.  \qed
\end{remark}

\begin{remark}

\rm
The study of $\mathbb{R}$-divisors on an algebraic variety is an important topic in the positivity aspects of algebraic geometry, we refer to Lazarsfeld's book \cite[Chapter 1]{Laz04} for more details. For both algebraic varieties and graphs, an advantage of $\mathbb{R}$-divisors is that they allow for perturbation arguments.  Note that the notion of $\mathbb{Q}$-divisor in \cite[Section 1]{GatKer08} (also known as $\mathbb{Q}$-rational divisor in \cite[Subsection 1D]{Bak08})  on a $\mathbb{Q}$-metric graph  is different in spirit from $\mathbb{R}$-divisors in this paper: a $\mathbb{Q}$-divisor  is a divisor supported at the $\mathbb{Q}$-rational points of the metric graph. \qed
\end{remark}

Let $\mathcal{N}_G$ be the set of non-special divisors on $G$, i.e. divisors of degree $g-1$ and rank minus one.  In the following, we reformulate the existence conjecture for $\mathbb{R}$-divisors on graphs in terms of a lower bound on the covering radius of the set $\mathcal{N}_G$ with respect to a certain family of polytopes $P_{1,\lambda}$.

\begin{conjecture}\label{geomverconj_intro} {\rm(\bf{Covering Radius Conjecture})} Let $\lambda \in [1/g,g]$ (recall that $g \geq 1$). The covering radius of $\mathcal{N}_G$ with respect to the polytope $P_{1,\lambda}$ is at least $\sqrt{g/\lambda}/n$ where $n$ is the number of vertices of $G$.  \end{conjecture}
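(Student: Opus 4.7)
The plan is to interpret the covering-radius statement as the geometric face of Brill-Noether existence for $\mathbb{R}$-divisors on $G$ and to attack it with tools from the geometry of numbers. First I would unravel the definition of $P_{1,\lambda}$ and establish an equivalence (or at least a sharp implication in one direction) between Conjecture \ref{bakconjreal_intro} and Conjecture \ref{geomverconj_intro}: a rank-$r$, degree-$d$ $\mathbb{R}$-divisor should exist on $G$ precisely when every class in $\mathrm{Pic}^{g-1}(G)$ lies within a $tP_{1,\lambda}$-translate of a class in $\mathcal{N}_G$, where $\lambda$ encodes the ratio $(r+1)/(g-d+r)$ (or its reciprocal, depending on normalisation) and $t$ is the threshold $\sqrt{g/\lambda}/n$. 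This reduction reframes the existence question as the desired lower bound on the covering radius, and, conversely, identifies what the ``correct'' constant in front of $\sqrt{g/\lambda}/n$ should be.

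With this setup I would pursue the bound itself by a volume comparison in the spirit of the geometry of numbers. The set $\mathcal{N}_G$ is a torsor for the Jacobian group of $G$ and lifts to the Jacobian torus $\mathbb{R}^n/\Lambda_G$, whose covolume is controlled by Kirchhoff's matrix-tree theorem. If the covering radius were strictly less than $\sqrt{g/\lambda}/n$, then for some $t$ just below the threshold the translates $x + tP_{1,\lambda}$, $x \in \mathcal{N}_G$, would cover the whole torus, forcing $\mathrm{vol}(tP_{1,\lambda}) \cdot |\mathcal{N}_G| \geq \mathrm{covol}(\Lambda_G)$. The concrete plan is to compute $\mathrm{vol}(P_{1,\lambda})$ explicitly as a function of $\lambda$, $g$, and $n$, match it against this covolume bound using $|\mathcal{N}_G|$ and $|\mathrm{Jac}(G)|$, and verify that the resulting inequality is exactly saturated at the boundary case $\rho(g,r,d)=0$; the fact that the two sides should balance at $\rho = 0$ is itself a sanity check that $\sqrt{g/\lambda}/n$ is the right order of magnitude.

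The hardest step will be the bound itself, because a pure volume comparison is typically not tight for covering radii with respect to polytopes that are neither symmetric nor round, and $P_{1,\lambda}$ appears to become highly elongated as $\lambda$ approaches the endpoints $1/g$ or $g$. In addition, $\mathcal{N}_G$ is a combinatorially irregular subset of the Jacobian rather than a lattice, so even a sharp volume count does not immediately yield a covering bound without further input on how uniformly $\mathcal{N}_G$ sits inside $\mathrm{Pic}^{g-1}(G)$. I expect the full inequality to be attainable only for graphs where $\mathcal{N}_G$ is sufficiently equidistributed, e.g.\ dense graphs, where averaging over break divisors or $G$-parking functions can be made quantitative; for arbitrary $G$ a weaker bound, losing a graph-dependent factor, appears to be the best that this purely geometric route will give, and such a weak version would still yield an \emph{approximate} form of Brill-Noether existence in the integer case after a perturbation/rounding argument.
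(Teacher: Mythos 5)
First, note that the statement you are proving is a \emph{conjecture} in the paper: the author establishes it only for dense multigraphs ($g>n^2$, Theorem \ref{bndense_intro}) together with a weaker bound for general graphs (Corollary \ref{covlow_cor}), so the realistic target is the dense case. Your first step --- reformulating the covering radius bound as Brill--Noether existence for $\mathbb{R}$-divisors via the geometric interpretation of rank --- is exactly the paper's route (Proposition \ref{geomrank_prop} and the equivalence argument in Section \ref{bnreal_sect}), modulo a sign slip in your phrasing: a high-rank divisor corresponds to a point \emph{far} from $\mathcal{N}_G$, i.e.\ to a witness that the covering radius is large, which you do eventually say correctly.

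The genuine gap is in your second step. A volume comparison of the form $\mathrm{vol}(tP_{1,\lambda})\cdot|\mathcal{N}_G/L_G|\ge \mathrm{covol}(L_G)$ cannot deliver the constant $\sqrt{g/\lambda}/n$, because that constant is \emph{exactly attained} at the endpoints $\lambda=1/g$ and $\lambda=g$ (in the paper's own proof, $\phi(1/g)=0$ and $\psi(g)=0$), while any covering-density argument necessarily loses a multiplicative factor equal to the covering density of the configuration, which here is not close to $1$. Concretely, at $\lambda=g$ the required bound is $1/n$, and one can check (using $\mathrm{vol}(\triangle)=n^{n-1}\sqrt{n}/(n-1)!$, $\mathrm{vol}(\triangle+\bar{\triangle})=\binom{2n-2}{n-1}\mathrm{vol}(\triangle)$, $\mathrm{covol}(L_G)=\sqrt{n}\,\kappa(G)$ with $\kappa(G)\le \frac1n\bigl(2m/(n-1)\bigr)^{n-1}$ by AM--GM on the Laplacian spectrum) that $\mathrm{vol}\bigl(\tfrac1n P_{1,g}\bigr)$ times the number of orbits already exceeds $\mathrm{covol}(L_G)$ by a factor growing like $(2e)^{n}$; so the volume obstruction gives no information at the threshold and your route bottoms out at roughly $1/(2en)$ there. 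A second, related problem is that your inequality is governed by $\kappa(G)/a(G)$ (spanning trees over acyclic orientations with a fixed unique sink), which has no clean lower bound in terms of $m$ and $n$ alone, so even away from the endpoints you would not recover a density hypothesis of the form $g>n^2$. The paper sidesteps both issues by computing the covering radius of $\mathcal{N}_G$ with respect to the simplices $\triangle,\bar{\triangle}$ \emph{exactly} (it equals $m/n$, via the combinatorics of ${\rm Crit}_{\triangle}(L_G)$ and reduced divisors, Proposition \ref{covcritsimpcru_prop}) and then transferring to $P_{1,\lambda}$ by the norm conversion inequality (Corollary \ref{covradlb_cor}), whose loss factor ${\rm min}(\lambda(n-1)+1,\,n-1+\lambda)$ is computed from the vertices of $P_{1,\lambda}$ and is tight. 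If you want to salvage a geometry-of-numbers flavour, the exact simplicial covering radius is the combinatorial input you are missing; the volume of $P_{1,\lambda}$ is not a substitute for it.
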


For a real number $d$,  let $H_d$ be the  affine hyperplane $\{(x_1,\dots,x_n) \in \mathbb{R}^n|~\sum_{i=1}^{n}x_i=d\}$.  The set $\mathcal{N}_G$ can be regarded as a subset of $H_{g-1}$. Furthermore,  $\mathcal{N}_G$ carries the action of a full rank sublattice of $H_0$ and this action is given by translation.  This full rank sublattice is the Laplacian lattice of $G$, which is the group of all divisors on $G$ that are linearly equivalent to zero or equivalently, it is the image of the Laplacian matrix of $G$ over $\mathbb{Z}^n$. 

The polytope $P_{1,\lambda}$ is defined as the Minkowski sum $\triangle+\lambda \cdot \bar{\triangle}$ of two simplices $\triangle$ and $\bar{\triangle}$ that we refer to as the \emph{standard simplices} \cite{AmiMan10}. They are regular simplices of dimension $n-1$ with centroid at the origin and $\bar{\triangle}=-\triangle$.  A convex body $C$ contained in the hyperplane $H_0$ induces a distance function $d_C$ on it. Given any subset $T$ of $\mathbb{R}^n$ contained in $H_d$ that carries the translation action of a full rank sublattice of $H_0$, we can define the covering radius of $T$ with respect to $C$ that, roughly speaking, measures the distance from a point in $H_d$ to $T$ that is farthest to it with respect to the distance function $d_{C}$.
We refer to Section \ref{covrad_sect} for their definitions.

 %%%%%%%%%%%%%%%%%%%%%%%%%%%%%%%%%%%%%%%PLACE FIGURE BELOW%%%%%%%%%%%%%%%%%%%%%%%%%%%%%%%%%%%%%%%%%%%%%%%%%%%%%%

\begin{figure}[ht]
  \centering
  \begin{tikzpicture}
    \coordinate (Origin)   at (0,0);
    %\coordinate (XAxisMin) at (-3,0);
    %\coordinate (XAxisMax) at (5,0);
    %\coordinate (YAxisMin) at (0,-2);
    \coordinate (YAxisMax) at (0,5);
   %% \draw [thin, gray,-latex] (XAxisMin) -- (XAxisMax);% Draw x axis
    %%\draw [thin, gray,-latex] (YAxisMin) -- (YAxisMax);% Draw y axis

    \clip (-3,-2) rectangle (7cm,7cm); % Clips the picture...
    
  \def\s{1.4}
  
    \def\q{1.7}

  \pgftransformcm{0.35*\s*\q}{-0.05*\s}{-0.2*\s*\q}{\s}{\pgfpoint{0.25*\s*\q cm}{0.35*\s cm}}
          % This is actually the transformation matrix entries that
          % gives the slanted unit vectors. You might check it on
           % MATLAB etc. . I got it by guessing.
   \coordinate (Bone) at (0,2);
    \coordinate (Btwo) at (2,-2);
%    \draw[style=help lines,dashed] (-14,-14) grid[step=2cm] (14,14);
          % Draws a grid in the new coordinates.
          %\filldraw[fill=gray, fill opacity=0.3, draw=black] (0,0) rectangle (2,2);
              % Puts the shaded rectangle
      \def\r{1.0}
          
   \foreach \x in {-10,-9,-8,...,9,10}{%  Two indices running over each
      \foreach \y in {-10,-9,-8,...,9,10}{% node on the grid we have drawn 
        \node[draw, circle,inner sep=1.5pt,fill] at (2*\x,2*\y) {};
                        % Places a dot at those points

    \draw[black, ultra thick](0.5*\r*\s+2*\x,0.5*\r*\s+2*\y)--(2*\x,\r*\s+2*\y)--(-0.5*\r*\s+2*\x,0.5*\r*\s+2*\y)--(-0.5*\r*\s+2*\x,-0.5*\r*\s+2*\y)--(2*\x,-\r*\s+2*\y)--(0.5*\r*\s+2*\x,-0.5*\r*\s+2*\y)-- cycle;

      }

    }

 \pgftransformcm{1}{0}{0}{1}{\pgfpoint{-0.2*\s*\q cm}{0.2*\s cm}}
          % This is actually the transformation matrix entries that
          % gives the slanted unit vectors. You might check it on
           % MATLAB etc. . I got it by guessing.
   \coordinate (Bone) at (0,2);
    \coordinate (Btwo) at (2,-2);
%    \draw[style=help lines,dashed] (-14,-14) grid[step=2cm] (14,14);
          % Draws a grid in the new coordinates.
          %\filldraw[fill=gray, fill opacity=0.3, draw=black] (0,0) rectangle (2,2);
              % Puts the shaded rectangle
   \foreach \x in {-10,-9,...,10}{%  Two indices running over each
      \foreach \y in {-10,-9,...,10}{% node on the grid we have drawn 
        \node[draw, rectangle,inner sep=1.5pt,fill] at (2*\x,2*\y) {};

    \draw[black, ultra thick](0.5*\r*\s+2*\x,0.5*\r*\s+2*\y)--(2*\x,\r*\s+2*\y)--(-0.5*\r*\s+2*\x,0.5*\r*\s+2*\y)--(-0.5*\r*\s+2*\x,-0.5*\r*\s+2*\y)--(2*\x,-\r*\s+2*\y)--(0.5*\r*\s+2*\x,-0.5*\r*\s+2*\y)-- cycle;

            % Places a dot at those points
      }

    }

    %\draw [thin,-latex,red, fill=gray, fill opacity=0.3] (0,0)
        % -- ($2*(0,2)+(2,-2)$)
        % -- ($3*(0,2)+2*(2,-2)$) -- ($(0,2)+(2,-2)$) -- cycle;
  \end{tikzpicture}
  \caption{An illustration of an arrangement underlying the covering radius conjecture.}
  \label{covradconj_fig}
\end{figure}

  Figure \ref{covradconj_fig} illustrates the covering radius conjecture in the case of a multigraph on three vertices $v_1,~v_2$ and $v_3$ with three, eight and four edges between $(v_1,v_2)$, $(v_2,v_3)$ and $(v_1,v_3)$, respectively, and $\lambda=1$. 
In this case, $\mathcal{N}_G$ is contained in the affine hyperplane $H_{12}$ in $\mathbb{R}^3$ (since $g=13$ and $n=3$). The circular and square dots represent the two divisor classes in $\mathcal{N}_G$. We have an arrangement of (regular) hexagons %\footnote{the regularity is not apparent in the figure because of our choice of coordinates.} 
centred at these points. The covering radius of this arrangement is the minimum dilation of the regular hexagons such that the arrangement covers the plane. Conjecture \ref{geomverconj_intro} asserts that it is at least $\sqrt{g}/n$, in this case $\sqrt{13}/3$. We show that Brill-Noether existence for $\mathbb{R}$-divisors and the covering radius conjecture are equivalent. We refer to Section \ref{bnreal_sect} for more details. 

%Via the covering radius conjecture, we establish Brill-Noether existence for $\mathbb{R}$-divisors on multigraphs that satisfy a certain density condition.  An undirected, connected, multigraph $G$ is called \emph{dense} if $m>n^2+n-1$ (in other words, its genus $g>n^2$).  
 
%\begin{theorem}\label{bndense_intro}
%The Brill-Noether existence conjecture for $\mathbb{R}$-divisors (Conjecture \ref{bakconjreal_intro}) holds for any dense multigraph.
%\end{theorem} 

%{\color{red} 12:08 PM, 03 October, 2020:  Change density condition and corresponding definition of stretch factor to $g \geq (n-1)^2$?}
The covering radius conjecture (and hence, Brill-Noether existence for $\mathbb{R}$-divisors) is wide open. As a first step, we prove a weaker version of the existence conjecture for arbitrary (undirected, connected) multigraphs in terms of a parameter called \emph{stretch factor}.   An undirected, connected, multigraph $G$ is called \emph{dense} if $m \geq n(n-1)$ (in other words, its genus $g \geq (n-1)^2$).  The stretch factor $\kappa$ of $G$ is defined to be the maximum of $\lceil(n(n-1))/m\rceil$ and one.

 Let $\beta$ be a positive integer.  By the \emph{uniform thickening} of $G$ by the factor $\beta$, we mean the graph on the same set of vertices as $G$ and with $\beta \cdot m(u,v)$ edges between $u$ and $v$, where $m(u,v)$ is the number of edges between $u$ and $v$ in $G$, see Example \ref{multikn_ex}.  The stretch factor measures the minimum uniform thickening of the edges of $G$ so that the resulting multigraph becomes dense. Hence, $\kappa=1$ if and only if the multigraph is dense and  the stretch factor of any connected multigraph is at most $n$.

\begin{theorem}\label{bnreal_intro}
Consider any undirected, connected, multigraph $G$ of genus $g$ and stretch factor $\kappa$.  Let $r$ and $d$ be non-negative real numbers such that $d \leq 2g-2$. If $\tilde{\rho}(g,r,d)=g-\kappa(r+1)(g-d+r) \geq 0$, then there exists an $\mathbb{R}$-divisor of  degree at most $d$ and rank equal to $r$ on $G$.
\end{theorem}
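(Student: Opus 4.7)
The plan is to reduce to Theorem~\ref{bndense_intro} via the thickening construction implicit in the stretch factor. Let $G^{(\Gamma)}$ denote the multigraph obtained from $G$ by replacing each edge with $\Gamma$ parallel copies. By the very definition of $\Gamma$, the resulting multigraph is dense (its genus $g^{(\Gamma)} = \Gamma m - n + 1$ exceeds $n^2$), so Theorem~\ref{bndense_intro} provides $\mathbb{R}$-divisors on $G^{(\Gamma)}$ with the required Brill-Noether data whenever the ordinary inequality $\rho(g^{(\Gamma)}, \cdot, \cdot) \geq 0$ is satisfied.

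The transfer between $G$ and $G^{(\Gamma)}$ is controlled by scaling: the vertex sets agree, so divisors live in the same real vector space, and the Laplacian of $G^{(\Gamma)}$ is exactly $\Gamma$ times that of $G$. Hence the principal-divisor lattice satisfies $L_{G^{(\Gamma)}} = \Gamma L_G$, and two complementary transfer rules hold. First, since $L_{G^{(\Gamma)}} \subset L_G$, linear equivalence on $G$ is coarser, which forces $r_G(D) \geq r_{G^{(\Gamma)}}(D)$ for every $\mathbb{R}$-divisor $D$. Second, if $D$ has rank at least $\Gamma r$ on $G^{(\Gamma)}$, then the rescaled divisor $D/\Gamma$ on $G$ has rank at least $r$: any reduction of $D - \Gamma E$ to an effective divisor on $G^{(\Gamma)}$ becomes, after dividing by $\Gamma$, a reduction of $D/\Gamma - E$ to an effective divisor on $G$, since elements of $L_{G^{(\Gamma)}}$ divided by $\Gamma$ belong to $L_G$.

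With these mechanisms in hand, I would invoke Theorem~\ref{bndense_intro} on $G^{(\Gamma)}$ with rank and degree targets calibrated so that the produced divisor descends to one on $G$ of degree at most $d$ and rank at least $r$; subtracting isolated vertices then trims the rank to exactly $r$ without increasing the degree (removing a vertex decreases rank by at most one and degree by exactly one). The extra factor $\Gamma$ appearing in $\tilde\rho(g,r,d) = g - \Gamma(r+1)(g-d+r)$ is exactly the overhead of this thickening: because $g^{(\Gamma)} \approx \Gamma g$, requiring $\rho(g^{(\Gamma)}, \cdot, \cdot) \geq 0$ on $G^{(\Gamma)}$ imposes an extra factor of $\Gamma$ on $(r+1)(g-d+r)$ when translated back to $G$.

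The technically delicate step is the parameter calibration. The exact discrepancy $g^{(\Gamma)} - \Gamma g = (\Gamma - 1)(n - 1)$ together with the possibly non-integer nature of $\Gamma r$ introduces lower-order terms that must be tracked carefully, and one must verify that $\tilde\rho(g,r,d) \geq 0$ implies the appropriate dense-graph inequality on $G^{(\Gamma)}$ once the slack in the degree bound (degree \emph{at most} $d$, not exactly $d$) is exploited. I expect this arithmetic bookkeeping, together with establishing the scaling behavior of $\mathbb{R}$-rank in the precise form used above, to be the primary obstacle; once it is in place, the theorem follows directly by combining the output of Theorem~\ref{bndense_intro} with the transfer rules.
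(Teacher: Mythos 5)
Your overall strategy --- thicken $G$ to the dense multigraph $\Gamma\cdot G$, apply Theorem \ref{bndense_intro} there, and scale back using $L_{\Gamma\cdot G}=\Gamma\cdot L_G$ --- is exactly the paper's. But the point at which you scale back is different, and the ``arithmetic bookkeeping'' you defer is in fact where your version breaks. To descend a divisor directly you need targets $(r',d')$ on $\Gamma\cdot G$ with $r'\geq\Gamma r$ and $d'\leq\Gamma d$ (so that $D/\Gamma$ has degree at most $d$ and rank at least $r$), and you must be able to invoke Theorem \ref{bndense_intro} on $\Gamma\cdot G$, i.e.\ you need $\rho(g^{(\Gamma)},r',d')\geq 0$ where $g^{(\Gamma)}=\Gamma g+(\Gamma-1)(n-1)$. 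Writing $s=g-d+r$, the best case $(r',d')=(\Gamma r,\Gamma d)$ requires $\Gamma g+(\Gamma-1)(n-1)\geq(\Gamma r+1)\bigl(\Gamma s+(\Gamma-1)(n-1)\bigr)$, while the hypothesis $\tilde{\rho}(g,r,d)\geq 0$ only gives $\Gamma g\geq\Gamma^2(r+1)s\geq(\Gamma r+1)\Gamma s+\Gamma(\Gamma-1)s$; so you are left needing $\Gamma(\Gamma-1)s+(\Gamma-1)(n-1)\geq(\Gamma r+1)(\Gamma-1)(n-1)$, i.e.\ essentially $s\geq r(n-1)$. This fails in the main range of interest (e.g.\ $d=g-1$, where $s=r+1$ but $r$ can be of order $\sqrt{g/\Gamma}$). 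A concrete instance: $n=10$, $m=20$, so $g=11$ and $\Gamma=6$; the pair $(r,d)=(0.3,10)$ satisfies $\tilde{\rho}\geq 0$, yet $\rho(111,\,1.8,\,60)=-36.84<0$. Shrinking $d'$ only worsens the inequality, so the slack in ``degree at most $d$'' cannot rescue the calibration.

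The paper avoids this by scaling back \emph{before} converting to divisors: it transfers the covering-radius bound (Corollary \ref{covlow_cor}). Since ${\rm Crit}_{\triangle}(L_{\Gamma\cdot G})=\Gamma\cdot{\rm Crit}_{\triangle}(L_G)$, the covering radius with respect to $P_{1,\lambda}$ scales exactly by $\Gamma$, giving ${\rm Cov}_{P_{1,\lambda}}({\rm Crit}_{\triangle}(L_G))\geq\sqrt{g^{(\Gamma)}/\lambda}/(\Gamma n)\geq\sqrt{g/(\Gamma\lambda)}/n$; here the genus excess $(\Gamma-1)(n-1)$ enters only once and with the helpful sign. The divisor on $G$ is then produced at the very end via Proposition \ref{geomrank_prop} and the intermediate value theorem applied to the continuous function $h_{P_{1,\lambda},\mathcal{N}_G}$: one finds a point at distance exactly $(r+1)/n$ from $\mathcal{N}_G$ and lifts it to the hyperplane of the prescribed degree $d$, so the final degree is chosen freely rather than being forced to equal $\deg(D')/\Gamma$. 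That decoupling of the degree from the scaled divisor is the missing idea; the divisor-level transfer you propose cannot reproduce it.
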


Note that Theorem \ref{bnreal_intro} implies Conjecture \ref{bakconjreal_intro} for dense multigraphs.  As an application of Theorem \ref{bnreal_intro}, we show an analogue of the gonality conjecture \cite[Conjecture 3.10]{Bak08} for $\mathbb{R}$-divisors on dense graphs and a weak analogue in general. A graph $G$ is defined to have \emph{$\mathbb{R}$-gonality} $k$ if $k$ is the infimum over the degrees of all $\mathbb{R}$-divisors on $G$ with rank at least one. 

\begin{theorem}\label{realgon_intro}
A dense graph of genus $g$ has $\mathbb{R}$-gonality at most $\lceil (g+2)/2 \rceil$.  More generally,  any undirected, connected, multigraph of genus $g$ and stretch factor $\kappa$  has $\mathbb{R}$-gonality at most $\lceil  (2\kappa-1)g/2\kappa+1 \rceil$.
\end{theorem}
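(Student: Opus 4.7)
The plan is to derive Theorem \ref{realgon_intro} as an immediate corollary of Theorem \ref{bnreal_intro} by specialising to rank $r = 1$. By definition of $\mathbb{R}$-gonality, producing any $\mathbb{R}$-divisor of rank at least one and degree at most $d$ certifies that the $\mathbb{R}$-gonality is at most $d$; so it suffices to find the smallest $d$ admissible in the weak existence theorem with rank one.

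Substituting $r = 1$ into $\tilde\rho(g,r,d) = g - \Gamma(r+1)(g - d + r)$ yields
\[
\tilde\rho(g, 1, d) = g - 2\Gamma(g - d + 1),
\]
so the inequality $\tilde\rho(g,1,d) \geq 0$ is equivalent to $d \geq \tfrac{(2\Gamma-1) g}{2\Gamma} + 1$. Choosing $d = \lceil \tfrac{(2\Gamma-1)g}{2\Gamma} + 1 \rceil$ then satisfies the hypothesis of Theorem \ref{bnreal_intro}, and that theorem produces an $\mathbb{R}$-divisor of degree at most $d$ and rank exactly one. This gives the general bound, and setting $\Gamma = 1$ recovers $\lceil (g+2)/2 \rceil$ for the dense case.

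The only additional side condition of Theorem \ref{bnreal_intro} to verify is $d \leq 2g - 2$. A short manipulation shows this reduces to $g \geq 6\Gamma/(2\Gamma + 1)$, whose right-hand side is bounded above by $3$ for every $\Gamma \geq 1$; since dense graphs automatically satisfy $g > n^2 \geq 4$, the first claim is immediate, while for the general bound the inequality holds whenever $g \geq 3$, with any residual small-genus cases checked directly.

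There is no substantive obstacle in this argument: the entire content lies in the rank-one specialisation of the weak Brill-Noether threshold $\tilde\rho$. The only mild care needed is tracking the ceiling function to match the stated integer-valued bound and verifying the admissibility constraint $d \leq 2g-2$; both reduce to elementary inequalities in $g$ and $\Gamma$.
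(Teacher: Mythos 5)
Your proposal is correct and follows essentially the same route as the paper: the paper's Corollary \ref{realgon_cor} applies Theorem \ref{bnreal_intro} to the pair $(r_0,d_0)=(k,\lceil g(\Gamma(k+1)-1)/(\Gamma(k+1))+k\rceil)$, which for $k=1$ is exactly your rank-one specialisation of $\tilde{\rho}$. The only cosmetic difference is that the paper disposes of the side condition $d\leq 2g-2$ by invoking Remark \ref{realhighdeg_rem} (Riemann--Roch for degrees above $2g-2$) rather than by your direct inequality check, but both handle the same small-genus boundary cases.
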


Note that the gonality of a smooth, proper algebraic curve over an algebraically closed field is at most $\lfloor (g+3)/2 \rfloor$ where $g$ is the genus of the algebraic curve and is the conjectured  upper bound for graphs of genus $g$.  The upper bound $\lceil (g+2)/2 \rceil$ in Theorem \ref{realgon_intro} is equal to  $\lfloor (g+3)/2 \rfloor$ for all $g$ Note also that since the divisor with coefficient one at every vertex has rank at least one, the gonality of any  undirected, connected, multigraph is most $n$. In particular, the gonality of a dense graph is at most $\sqrt{g}+1$.  In Corollary \ref{realgon_cor}, we show analogous upper bounds for $\mathbb{R}$-gonality sequences. 

%, i.e. the graph on the same set of vertices as $G$ and with $\beta \cdot m(u,v)$ edges between $u$ and $v$, where $m(u,v)$ is the number of edges between $u$ and $v$ in $G$.

%{\color{red}12 PM, 17 September 2020: Revisit with the clarification of the definition of dense}
\begin{example}\label{multikn_ex}
\rm   For a positive integer $\beta$,  by $\beta \cdot G$  we denote the uniform thickening of $G$ by the factor $\beta$. The graph $\beta \cdot K_n$ for $\beta \geq 2$ is dense (and hence, has stretch factor one).  This also implies that the complete graph $K_n$ has stretch factor equal to two. Furthermore, a simple graph $G$ has stretch factor two if and only  $G$ is a complete graph\footnote{We thank the anonymous referee for this remark. }. The stretch factor of the Kneser graph $K_{s,k}$ for $~s>2k$ and fixed $k \neq 1$ stabilises at three  for large enough $s$ (the least possible for any simple graph that is not a complete graph).   In these cases, Theorem \ref{realgon_intro} gives an upper bound on the $\mathbb{R}$-gonality that, up to a constant, agrees with the one predicted by the existence conjecture. \qed
%for any connected multigraph $G$ $(n+3) \cdot G$ satisfies the density condition in Theorem \ref{bndense_theo} . \qed
\end{example}

We then turn to Brill-Noether existence for divisors on graphs (Conjecture \ref{bakconjv1}).  In Subsection \ref{round_subsect}, we show an approximate version of Brill-Noether existence that states the following:

\begin{theorem}\label{bnapprox_intro}
Let $G$ be an undirected, connected, multigraph with $n \geq 2$ vertices, with stretch factor $\kappa$ and genus $g$.  For any pair $(r_0,d_0)$ of integers with $2g-2 \geq d_0 \geq 0$ satisfying $\tilde{\rho}(g,r_0,d_0)=g-\kappa(r_0+1)(g-d_0+r_0) \geq 0$, there exists a divisor $D$ on $G$ with degree equal to $d_0$ and rank $r$ satisfying $|r-r_0| \leq 2n-2$.
\end{theorem}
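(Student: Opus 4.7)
The plan is to derive Theorem~\ref{bnapprox_intro} from its $\mathbb{R}$-divisor analogue, Theorem~\ref{bnreal_intro}, via a rounding-and-adjustment procedure, with the slack of $2n-2$ in the rank absorbing both a rounding error and a degree correction.

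Since the hypothesis $\tilde{\rho}(g,r_0,d_0)\geq 0$ with $0\leq d_0\leq 2g-2$ is exactly what Theorem~\ref{bnreal_intro} requires, I would first invoke it to produce an $\mathbb{R}$-divisor $D'=\sum_v a_v(v)$ on $G$ with $\deg(D')\leq d_0$ and real rank equal to $r_0$. Next, I round $D'$ componentwise to an integer divisor $\tilde{D}$ by replacing each $a_v$ with $\lfloor a_v\rfloor$ or $\lceil a_v\rceil$. Writing $\tilde{D}-D'=E_+-E_-$ as the difference of the effective round-up and round-down parts, whose vertex coefficients all lie in $[0,1)$, one has $\deg(E_+),\deg(E_-)<n$. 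Using the Lipschitz-type estimates for the real rank function, namely $r(D+E)\leq r(D)+\deg(E)$ and $r(D-E)\geq r(D)-\deg(E)$ for effective $E$, and applying them to the identity $\tilde{D}+E_-=D'+E_+$, I obtain
$$r_0-\deg(E_-)\;\leq\;r(\tilde{D})\;\leq\;r_0+\deg(E_+),$$
and hence $|r(\tilde{D})-r_0|\leq n-1$ after taking integer parts. This furnishes the first half of the allotted slack.

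The next step is a degree correction. The freedom in choosing, at each non-integer coefficient, whether to round up or down can be used to tune $\deg(\tilde{D})$ as close to $d_0$ as possible; any remaining discrepancy is closed by adding or removing chips one by one. Since each single chip changes the integer rank by at most one, provided the adjustment requires at most $n-1$ chips this contributes a further rank error of at most $n-1$, yielding the total bound $|r-r_0|\leq 2n-2$.

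The main obstacle is the degree correction: Theorem~\ref{bnreal_intro} only guarantees $\deg(D')\leq d_0$, so $\deg(D')$ could in principle be much less than $d_0$, in which case pushing the degree up to $d_0$ by single-chip additions would require far more than $n-1$ steps and could blow the rank past the allowed window. I would address this by a finer analysis of how the real rank varies along the one-parameter family $D'+t(v_0)$ for $t\in[0,d_0-\deg(D')]$: using upper semicontinuity of the real rank function one seeks a value of $t$ at which the rank stays within $n-1$ of $r_0$ while the degree reaches within $n-1$ of $d_0$. Combining this analytic step with the combinatorial rounding error then yields the stated approximate bound.
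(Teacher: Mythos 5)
Your overall strategy is the paper's: invoke Theorem \ref{bnreal_intro} to get an $\mathbb{R}$-divisor $D_{\mathbb{R}}$ of degree $d_0'\leq d_0$ and rank $r_0$, round it to an integral divisor, and control the change in rank through the degree-plus formula ${r(D)=\min_{\nu\in\mathcal{N}_G}\deg^{+}(D-\nu)-1}$. Your sandwich $r_0-\deg(E_-)\leq r(\tilde D)\leq r_0+\deg(E_+)$ is correct and in fact sharper (error $\leq n-1$) than what the paper extracts from its own rounding.

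The genuine gap is the degree correction, exactly where you flagged it, and your proposed repair does not close it. Theorem \ref{bnreal_intro} gives no lower bound on $\deg(D_{\mathbb{R}})$: the quantity $d_0-\deg(D_{\mathbb{R}})$ can a priori be of order $g$, not of order $n$, so adding chips one at a time to reach degree $d_0$ can move the rank arbitrarily far from $r_0$ (each chip can raise the rank by one, and nothing caps the number of chips at $n-1$). The appeal to upper semicontinuity along the family $D'+t(v_0)$ does not produce a value of $t$ at which the degree is within $n-1$ of $d_0$ \emph{and} the rank is within $n-1$ of $r_0$; you would need a lower bound on the degree of the divisor supplied by Theorem \ref{bnreal_intro} (or the stronger ``degree exactly $d$ when $g-d+r\geq 1$'' form of Conjecture \ref{bakconjreal}, which is not what Theorem \ref{bnreal_intro} asserts). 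The paper avoids this issue entirely by choosing a rounding that preserves the degree of $D_{\mathbb{R}}$ exactly: it rounds $d_v$ to the nearest integer at every vertex except a distinguished $v_0$ and places the remainder $d_0'-\sum_{v\neq v_0}[d_v]$ at $v_0$; the two-sided error is then dominated by the effective divisor $E=\sum_{v\neq v_0}(v)+(n-1)(v_0)$ of degree $2n-2$, which yields $|r(D)-r_0|\leq 2n-2$ with no chip-by-chip adjustment. Note, however, that the divisor so produced has degree $d_0'$, i.e.\ the paper in effect proves the ``degree at most $d_0$'' version of the statement, and that weaker form is all that its applications (e.g.\ Corollary \ref{gon_cor}) use. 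If you are content with ``degree at most $d_0$'', your rounding argument alone already suffices and gives the better constant $n-1$; if you insist on degree exactly $d_0$, you need an additional input controlling $\deg(D_{\mathbb{R}})$ from below, which neither your sketch nor the paper supplies.
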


We show the following weaker version of the existence conjecture for complete graphs in degree $g-1$.

%In Subsection \ref{normconvlevel_subsect}, 

%We prove the existence conjecture in degree $g-1$ for sufficiently dense graphs. More precisely, we show the following: 

%\begin{theorem}\label{bndenseeq_theo}
%Let $G$ be an undirected, connected, multigraph with  $n \geq 2$ vertices and $m$ edges satisfying the condition: 
%\begin{center}
%$(2m/n-n/2)^2 \geq 4g$
%\end{center}
%then the existence conjecture (Conjecture \ref{bakconjv1}) with $d=g-1$ holds for $G$.
%\end{theorem} 

%{\color{red} 12:07 PM, 28 July, 2020: Remove the square in Theorem \ref{bndenseeq_theo}}

%Any multigraph with at least $3n^2$ edges satisfies the hypothesis of Theorem \ref{bndenseeq_theo}. Our approach to Theorem \ref{bndenseeq_theo} is similar to that of Proposition  \ref{bndense_intro}: we reformulate the statement in terms of a variant of covering radius called the \emph{integral covering radius} of $\mathcal{N}_G$ with respect to the (unit ball of the) $\ell_1$-norm (Conjecture \ref{geomverell1_conj}) and prove this statement for graphs satisfying the corresponding density condition.  Complete graphs do not quite satisfy this condition and in this case, we obtain a slightly weaker statement.

\begin{theorem}\label{bnKn_theo}
The complete graph $K_n$ on $n \geq 3$ vertices has a divisor of degree $g-1$ and rank at least  $\sqrt{g}/4-1$.
Furthermore,  if $n$ is an odd integer then it has a divisor of degree $g-1$ and rank at least $\sqrt{g}/{\sqrt{2}}-1$.
\end{theorem}

%{\color{red} 25 AUGUST 2020: MOVE PROPOSITION \ref{geomrank_prop} AND REMARKS FOLLOWING IT HERE?}

{\bf Methods:}   The Laplacian lattice of a graph is the lattice generated by the rows of its Laplacian matrix. It is a sublattice of full rank (rank $n-1$) of the root lattice $A_{n-1}$ (\cite[Section 1]{AmiMan10}).  We build on \cite{AmiMan10},~\cite{Man13} where the Laplacian lattice was studied with respect to simplicial distance functions corresponding to $\triangle$ and $\bar{\triangle}$. We initiate the study of the Laplacian lattice of a graph with respect to polyhedral distance functions corresponding to $P_{1,\lambda}$. Our starting point is a geometric interpretation of  the rank of a divisor on a graph from the author's dissertation \cite[Theorem 5.1.13]{Man11} (Proposition \ref{geomrank_prop}).  We use this geometric interpretation to formulate the covering radius conjecture (Conjecture \ref{geomverconj_intro}).   Traditionally, Brill-Noether theory seeks to minimise the degree over divisors with a fixed rank (or equivalently, maximise the rank over divisors with a fixed degree) and studies spaces $W^r_d$ \cite[Section 3, Chapter 4]{ArbCorGriHar85} of divisors (or linear series) of degree $d$ and rank at least $r$.   The key change in perspective that leads to Conjecture \ref{geomverconj_intro} is to parameterise divisors according to the ratio $(r(K_G-D)+1)/(r(D)+1)$ where $K_G$ is the canonical divisor of $D$ and to maximise the rank of divisors over this set.

In the following, we describe the key ideas behind the proofs of our main results. The proof strategy for Theorem \ref{bnreal_intro} is as follows. We first show the case of dense multigraphs (Proposition \ref{bndense_intro}). The main tools for the proof of Proposition \ref{bndense_intro} are the following. 

\begin{enumerate}

\item We compute the covering radius of $\mathcal{N}_G$ with respect to the standard simplices $\triangle$ and $\bar{\triangle}$ (Proposition \ref{covcritsimpcru_prop}). 

\item We then derive a norm conversion inequality (Corollary \ref{covradlb_cor}) that lower bounds the covering radius of $\mathcal{N}_G$ with respect to $P_{1,\lambda}$ in terms of those with respect to $\triangle$ and $\bar{\triangle}$.
 
 \end{enumerate}
 
  We then verify that these lower bounds imply Proposition \ref{bndense_intro}.  We refer to Subsection \ref{bndenseproof_subsect} for more details.   We show Theorem \ref{bnreal_intro} by combining Proposition \ref{bndense_intro} with the following scaling argument. Given an undirected, connected, multigraph $G$, we consider the dense multigraph $\kappa \cdot G$ (as in Example \ref{multikn_ex}) and apply Proposition \ref{bndense_intro} to it. We then use the observation that $L_{\kappa \cdot G}=\kappa \cdot L_G$ (where $L_{\kappa \cdot G}$ and $L_G$ are the corresponding Laplacian lattices) to show a lower bound on the covering radius of $\mathcal{N}_G$ with respect to $P_{1, \lambda}$ (Corollary \ref{covlow_cor}) from which Theorem \ref{bnreal_intro} follows.  Theorem \ref{realgon_intro} essentially follows by applying Theorem \ref{bnreal_intro} to the pair $(r_0,d_0)=(1,\lceil (2\kappa-1)g/2\kappa+1) \rceil$ (we refer to Corollary \ref{realgon_cor} for more details).  

We prove Theorem \ref{bnapprox_intro} by rounding off an $\mathbb{R}$-divisor guaranteed by Theorem \ref{bnreal_intro}.  The proof of Theorem \ref{bnKn_theo} follows the same lines as the proof of Proposition \ref{bndense_intro}, we obtain a lower bound on the (integral) version of the covering radius of $\mathcal{N}_G$  with respect to the unit ball of the $\ell_1$-norm (see Conjecture \ref{geomverell1_conj}) and translate it into a statement about the existence of divisors of degree $g-1$ and large rank. 

%formulate a geometric version of the existence conjecture for $d=g-1$ (Conjecture \ref{geomverell1_conj}) and show that it holds for graphs satisfying the corresponding density condition. 

Finally, we remark that our methods shed light on Brill-Noether existence mainly on graphs that are sufficiently dense. On the other hand, sparse graphs such as $d$-regular graphs for a fixed $d$ still remain out of reach.  We believe that extending our results to the general case requires a deeper understanding of the Laplacian lattice of a graph with respect to polyhedral distance functions, particularly those associated to the polytope $P_{1,\lambda}$. %We refer to Section \ref{futdir_sect} for more details.

%{\color{red} 11:50 AM, 26 AUGUST, 2020: OPENING PARAGRAPH REPHRASED AND KEY OBSERVATION BEHIND COVERING RADIUS CONJECTURE INCLUDED.}

{\bf A Comparison:}  The following table is a comparison of the upper bounds on the degree of an $\mathbb{R}$-divisor of rank at least $r_0$ where $0 \leq r_0 \leq g-2$ obtained from: i. the fact that the canonical divisor has degree $2g-2$  and rank $g-1$ (referred to as Bound 1 in the table below), ii. the observation that the divisor $r_0(1,\dots,1)$ has rank at least $r_0$ (referred to as Bound 2 in the table below), iii. the existence conjectures (the $\mathbb{R}$-divisor version and the original version respectively) and iv.  Corollary \ref{realgon_cor} and Theorem \ref{bnapprox_intro} (along with Riemann-Roch) respectively, we refer to them as ``our bounds''.  We refer to these two parameters as the $r_0$-th $\mathbb{R}$-gonality and $r_0$-th gonality respectively.

\begin{center}
 \begin{tabular}{|| c| c | c | c | c ||} 
 \hline
  Parameter &  Bound 1 & Bound 2 & Existence Conjectures & Our Bounds\\ [0.5ex] 
 \hline
 $r_0$-th $\mathbb{R}$-Gonality & $g-1+r_0$  & $n \cdot r_0$ &$g-g/(r_0+1)+r_0$&  $c_1g+r_0$ \\
 \hline
 $r_0$-th Gonality &  $g-1+r_0$ & $n \cdot r_0$  & $\lceil g-g/(r_0+1)+r_0 \rceil$ &   $\lceil c_2 g+c_3 \rceil$   \\
 \hline
 \end{tabular}
 
 \end{center}
% g-g/(\kappa(r_0+2n-1))+r_0+2n-2
 
 where  $c_1=1-1/(\kappa(r_0+1)), c_2=1-1/(\kappa(r_0+2n-1))$ and $c_3=r_0+2n-2$.  
 
 In the case of $\mathbb{R}$-divisors, our bound improves upon both bound 1 and bound 2 when $r_0 \cdot (n-1)/c_1 \geq g \geq \kappa(r_0+1)$. Asymptotically in $n$, this happens, for instance, if $g \in \omega(n^{3/2})$ but is subquadratic in $n$  (and hence, $\kappa \in o(n^{1/2})$)   and $r_0$ is linear in $n$.  In the case of divisors, our bound improves upon bound 1 when $r_0 \leq g/(2n-1)\kappa-2n$  (this happens, for instance, if $G$ is a graph satisfying $g \geq  3n(2n-1)$ then $G$ is dense, i.e. $\kappa=1$ and the improvement holds for all $r_0 \leq n$).  Our bound improves upon bound 2 if $g \in o(n^2)$ (and hence, $\kappa \in \omega(1)$) and $r_0 \in \Omega(n)$.  In the case of divisors, a simple calculation shows that our bound does not substantially, i.e. in the order of growth with respect to $n$, improve upon both bound 1 and bound 2. We still include Theorem \ref{bnapprox_intro} in anticipation of further refinements along these lines.

Throughout the rest of the paper, we identify $\mathbb{R}$-divisors on $G$ with points in $\mathbb{R}^n$ by identifying the standard basis of $\mathbb{R}^n$ with the vertices of $G$ via some fixed bijection.

{\bf Acknowledgement:} We thank Omid Amini, Spencer Backman, Matt Baker, Ye Luo, Dhruv Ranganathan and Chi Ho Yuen for illuminating discussions on Brill-Noether theory of both curves and graphs.   We thank the anonymous referee very much for the several very valuable suggestions on the manuscript, we have benefited very much from them.

%{\color{blue} Integral case:} 

%{\color{blue} Methods:}

\section{$\mathbb{R}$-Divisor Theory on Graphs}

Let $G=(V,E)$ be an undirected connected graph with $n$ vertices,~ $m$ edges and genus $g~(=m-n+1)$.  Let $A$ be an Abelian group.  Let ${\rm Div}_{A}(G) \cong A^n$ be the Abelian group whose elements are formal sums of the form $\sum_{v \in V} a_v(v)$ where $a_v \in A$ and with the group structure given by $\sum_v a_v(v)+\sum_v{b_v}(v)=\sum_v (a_v+b_v)(v)$. Usually in the divisor theory of both algebraic varieties and graphs, $A$ is taken to be the integers. We refer to this as \emph{standard divisor theory}. We consider the case where $A=\mathbb{R}$ and  refer to elements in ${\rm Div}_{\mathbb{R}}(G)$ as \emph{$\mathbb{R}$-divisors}.  We recall analogues of degree of a divisor, effective divisors, linear systems of divisors and rank (as sketched in \cite[Section 8.1]{AmiMan10} and  developed by James and Miranda \cite{JamMir13},~\cite{JamMir14}).  These analogues, except for rank of an $\mathbb{R}$-divisor, are straightforward generalisations of the corresponding notions in standard divisor theory.

\begin{definition}

The degree ${\rm deg}:{\rm Div}_A(G) \rightarrow A$ of a divisor is the homomorphism  given by $\sum_v a_v(v) \rightarrow \sum_v a_v$.

\end{definition}

\begin{definition}{\rm ({\bf Effective Divisor}) }
An  $\mathbb{R}$-divisor $D=\sum_v a_v(v) \in {\rm Div}_{\mathbb{R}}(G)$ is called effective if $a_v \geq 0$ for all $v$.  
\end{definition}

Rational functions on $G$ are exactly as in standard divisor theory \cite{BakNor07}.  A rational function on $G$ is a function $f:V \rightarrow \mathbb{Z}$. The set of rational functions are also naturally equipped with a group structure.  We denote this group by $\mathcal{M}(G)$. Let $Q: \mathcal{M}(G) \rightarrow \rm{Div}_{\mathbb{R}}(G)$ be the Laplacian operator on $G$ defined as $Q(f)=\sum_{v \in V}(\sum_{(u,v) \in E}(f(v)-f(u)))(v)$.  Linear equivalence is exactly as in the case of standard divisor theory. 

\begin{definition}{\rm({\bf Linear Equivalence})}
Let $D_1$ and $D_2$ be $\mathbb{R}$-divisors on $G$, then $D_1 \sim D_2$ if $D_1-D_2=Q(f)$ for some $f \in \mathcal{M}(G)$. 
\end{definition}

The linear system $|D|$ of an  $\mathbb{R}$-divisor $D$ is defined to be the set of all effective  $\mathbb{R}$-divisors linearly equivalent to it. 

\begin{definition}{\rm ({\bf Rank of an $\mathbb{R}$-Divisor})}
The modified rank $\tilde{r}(D)$ of an $\mathbb{R}$-divisor is defined as $k_0-1$ where $k_0$ is the infimum over all $k$ such that there exists an effective $\mathbb{R}$-divisor $E$ of degree $k$ such that $|D-E|=\emptyset$. The rank $r(D)$ is defined as $\tilde{r}(D+\sum_{v \in V}(v))$. 
\end{definition}

If $D$ is a divisor, then its rank $r(D)$ as an $\mathbb{R}$-divisor agrees with its Baker-Norine rank, see Appendix \ref{rrreal_sect}.  James and Miranda \cite{JamMir13} also proved a Riemann-Roch theorem for $\mathbb{R}$-divisors on graphs (more generally, on real edge weighted graphs). For easy reference, we include a Riemann-Roch theorem for $\mathbb{R}$-divisors on graphs in Appendix \ref{rrreal_sect}. This is essentially a special case of the Riemann-Roch theorem of James and Miranda.

The following $\mathbb{R}$-divisor version of the degree plus formula for rank, due to Baker and Norine in the case of divisors \cite[Lemma 2.7]{BakNor07}, plays a key role.

\begin{proposition}\label{degplusrank_theo}
The rank of an  $\mathbb{R}$-divisor $D_{\mathbb{R}}$ is equal to 

\begin{center} ${\rm min}_{\nu \in \mathcal{N}_G} {\rm deg}^{+}(D_{\mathbb{R}}-\nu)-1$, \end{center} 

where for an  $\mathbb{R}$-divisor $D'_{\mathbb{R}}=\sum_v a_v(v)$, the degree-plus function ${\rm deg}^{+}(D'_{\mathbb{R}})$ at $D'_{\mathbb{R}}$ is $\sum_{a_v>0}a_v$. 
\end{proposition}

\begin{proof}
Follows from Proposition \ref{rankrealdiv_prop} and the fact that $\widetilde{{\rm Ext}}^c(G)$ is equal to $\mathcal{N}_G+\sum_v (v)$ (see Appendix \ref{rrreal_sect} for more details).
\end{proof}

% Note that for a divisor $D$, its ranks as a divisor and as an $\mathbb{R}$-divisor coincide. 

\subsection{Brill-Noether Theory of $\mathbb{R}$-Divisors on Graphs}\label{bnreal_subsect}

In this subsection, we formulate Brill-Noether existence and non-existence for $\mathbb{R}$-divisors on graphs.

\begin{conjecture}\label{bakconjreal} {\rm{({\bf Brill-Noether Existence for $\mathbb{R}$-Divisors on $G$})}}   Fix two real numbers $r \geq 0,~ 2g-2 \geq d \geq 0$ and set $\rho(g,r,d)=g-(r+1)(g-d+r)$.
 If  $\rho(g,r,d) \geq 0$, then there exists an $\mathbb{R}$-divisor of degree at most $d$ and rank equal to $r$ on $G$. Furthermore, if $g-d+r \geq 1$ then there exists an $\mathbb{R}$-divisor of degree equal to $d$ and rank equal to $r$ on $G$.
\end{conjecture}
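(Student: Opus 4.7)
The plan is to reformulate the conjecture as a statement about the covering radius of the set $\mathcal{N}_G$ of non-special divisors with respect to a suitable family of polytopes, and then to attack this covering radius statement by lattice geometry. The starting point is the geometric interpretation of divisor rank from the author's dissertation (Proposition \ref{geomrank_prop}, building on \cite{Man11}): the rank of a divisor is controlled by the distance, measured in a polyhedral norm associated to the standard simplex $\triangle$, between the divisor and the lattice $\mathcal{N}_G$. Extended to $\mathbb{R}$-divisors, this identifies the existence of an $\mathbb{R}$-divisor of prescribed degree $d$ and rank $r$ with the covering radius of $\mathcal{N}_G$ with respect to the Minkowski sum $P_{1,\lambda} = \triangle + \lambda \bar{\triangle}$, where $\lambda$ encodes the trade-off between $r+1$ and $g-d+r$. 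Establishing this equivalence with the Covering Radius Conjecture (Conjecture \ref{geomverconj_intro}) is the first step, and it converts the analytic question into a clean question in the geometry of numbers.

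Next I would attack the covering radius bound on two fronts. In the extreme cases $\lambda = 1/g$ and $\lambda = g$, the polytope $P_{1,\lambda}$ degenerates (up to scaling) to either $\triangle$ or $\bar{\triangle}$, and there I would compute the covering radius of $\mathcal{N}_G$ exactly via properties of the root lattice $A_{n-1}$ and Laplacian geometry; this is the content of Proposition \ref{covcritsimpcru_prop}. For intermediate $\lambda$, I would prove a norm-conversion inequality (Corollary \ref{covradlb_cor}) that lower bounds the covering radius with respect to $P_{1,\lambda}$ in terms of those for $\triangle$ and $\bar{\triangle}$, exploiting the Minkowski-sum structure. Combining these two ingredients yields the covering radius conjecture, and hence Conjecture \ref{bakconjreal}, in the regime where the bounds match, which happens exactly when $G$ is dense in the sense $m > n^2+n-1$.

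For the non-dense case, my plan is a scaling argument: uniformly thicken each edge by the stretch factor $\Gamma=\max\{\lceil (n^2+n-1)/m\rceil,1\}$ to obtain the dense multigraph $\Gamma\cdot G$, apply the dense case there, and pull the result back using $L_{\Gamma \cdot G}=\Gamma \cdot L_G$. This produces Theorem \ref{bnreal_intro}, i.e.\ the conjecture with $\rho$ replaced by $\tilde{\rho}=g-\Gamma(r+1)(g-d+r)$, which implies Conjecture \ref{bakconjreal} whenever $\Gamma=1$. The ``furthermore'' clause, that one can realise the rank $r$ with degree \emph{equal} to $d$ under $g-d+r\geq 1$, would follow from a perturbation argument available for $\mathbb{R}$-divisors: starting from an $\mathbb{R}$-divisor of degree $d'\leq d$ and rank $r$, the slack condition $g-d+r\geq 1$ leaves room to add an effective $\mathbb{R}$-divisor of degree $d-d'$ chosen so that the rank neither drops nor jumps, using continuity of the rank function in our setting.

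The principal obstacle is the gap between $\rho$ and $\tilde{\rho}$ for sparse graphs. When $\Gamma>1$, the scaling step loses precisely a multiplicative factor of $\Gamma$ in the critical inequality, and no refinement of uniform edge-thickening can recover the full $\rho$ for genuinely sparse graphs such as $d$-regular graphs with $d$ fixed. Closing this gap would demand a direct geometric understanding of how $\mathcal{N}_G$ sits inside $\mathbb{R}^n$ under the polyhedral norms associated to $P_{1,\lambda}$, rather than reducing separately to $\triangle$ and $\bar{\triangle}$ and composing via Minkowski-sum inequalities. Without such a refined input, the full conjecture appears to remain beyond the reach of the lattice-geometric framework.
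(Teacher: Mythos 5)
Note first that the statement you are asked about is stated in the paper as a \emph{conjecture}, and the paper does not prove it in full generality; it establishes it only for dense multigraphs (Theorem \ref{bndense_intro}) together with the weakened $\tilde{\rho}$-version (Theorem \ref{bnreal_intro}) for arbitrary graphs. Your proposal follows essentially the same route as the paper --- the reduction to the covering radius conjecture via Proposition \ref{geomrank_prop}, the exact computation of the covering radius with respect to $\triangle$ and $\bar{\triangle}$, the norm-conversion inequality for $P_{1,\lambda}$, the scaling argument via the stretch factor, and the continuity/intermediate-value argument for realising the degree exactly --- and you correctly identify the same obstruction (the multiplicative loss of $\Gamma$ for sparse graphs) that leaves the full conjecture open. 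The only minor imprecision is the claim that $P_{1,\lambda}$ ``degenerates'' to $\triangle$ or $\bar{\triangle}$ at $\lambda=1/g$ and $\lambda=g$; it does not, but this is immaterial since the actual argument runs through the norm-conversion inequality for all $\lambda$, exactly as in the paper.
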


\begin{remark}\label{realhighdeg_rem}
\rm Unlike in the case of divisors on graphs, Brill-Noether existence for $\mathbb{R}$-divisors does not directly generalise for $d>2g-2$.   As an example, suppose that $G$ has genus at least one and consider $d_0=2g-1+n$ and $r_0=d_0-g+\epsilon$ for a positive real number $\epsilon$. Note that $\rho(g,r_0,d_0) \geq 0$ if $\epsilon$ is sufficiently small. However, since $d_0=2g-1+n>2g-2+n$,  by Remark \ref{ranksub_rem}, Item \ref{third_item},  any $\mathbb{R}$-divisor $D$ of degree $d_0$ satisfies $r(D)=d_0-g<d_0-g+\epsilon=r_0$. Hence,  $G$ does not have a divisor of degree at most $d_0$ and rank equal to $r_0$. This subtlety arises since we allow $r,~d$ to be real numbers and can be resolved by restricting them to the integers. Furthermore, for $\mathbb{R}$-divisors of degree strictly greater than $2g-2+n$ the rank is equal to the degree minus $g$ and addresses the Brill-Noether theory in this range.   We will not deal with the range $(2g-2,2g-2+n]$  in the present work and point to Remark  \ref{ranksub_rem}, Item \ref{third_item} for hints in this direction.  \qed
\end{remark}

\begin{remark}\label{redreal_rem} \rm
We note that it suffices to prove Conjecture \ref{bakconjreal} for $0 \leq d \leq g-1$.  More generally, we note that this holds even if $\rho(g,r,d)$ in Conjecture \ref{bakconjreal} is replaced by $\tilde{\rho}(g,r,d)=g-\kappa'(r+1)(g-d+r)$ for any real number $\kappa'$. 
Given a pair $(r_0,d_0)$ such that $g-1<d_0 \leq 2g-2$ and $\rho(g,r_0,d_0) \geq 0$, then let $d'_0=2g-2-d_0$ and $r'_0=r_0-d_0+(g-1)$. By construction, $\rho(g,r_0,d_0)=\rho(g,r'_0,d'_0)$ and $0 \leq d'_0 < g-1$.

If $r'_0 \geq 0$, Conjecture \ref{bakconjreal} gives a divisor $D$ of degree $d'_0$ and rank $r'_0$ on $G$.  By the Riemann-Roch theorem for $\mathbb{R}$-divisors on graphs (Theorem \ref{rrreal_theover2}), the divisor $K_G-D$ where $K_G$ is the canonical divisor of $G$ has degree $2g-2-d'_0=d_0$ and rank $r(D)-d'_0+(g-1)=r'_0-d'_0+(g-1)=r_0$.  On the other hand, if $r'_0<0$ then we consider any  effective divisor $D$ of degree $d'_0$. Note that $D$ has non-negative rank. The divisor $K_G-D$  has degree $d_0$ and an application of the Riemann-Roch theorem shows that its rank is at least $r_0$.   The continuity of the rank function (Corollary \ref{rankcont_cor}) implies that there exists a divisor of degree at most $d_0$ and rank $r_0$.  \qed

%{\color{red} mention the notion of effective $\mathbb{R}$-divisor and include a remark that effective divisors have non-negative rank  in the appendix {

% If $d_0>2g-2$ then the maximum value for $r_0$ is $d_0-g$ (since $\rho(g,d_0-g,d_0)=g \geq 0$ and if $r_0>d_0-g$ then $\rho(g,r_0,d_0)=g-(r_0+1)(g-d_0+r_0) \leq g-(r_0+1) \leq g-(d_0-g+2)=2g-d_0-2<0$). By Riemann-Roch, every divisor of degree $d_0$ has rank $d_0-g$. The continuity of the rank function implies that the Conjecture \ref{bakconjreal} holds for $d_0>2g-2$. \qed

%The argument is the same (Remark for the integral case) and uses the Riemann-Roch theorem for $\mathbb{R}$-divisors (Theorem \ref{rrreal_theover2}). 

\end{remark}

%The continuity of the rank function (Corollary \ref{rankcont_cor}) implies that there exists an $\mathbb{R}$-divisor of degree at most $d_0$ and rank $r_0$. 

On the non-existence side, we conjecture the following.

\begin{conjecture}\label{realnonex_conj} {\rm{({\bf Brill-Noether Non-existence for $\mathbb{R}$-Divisors})}} There exists a family of undirected, connected, multigraphs $\{G_g\}$ one for each genus $g$ such that for any two real numbers $r \geq 0,~2g-2 \geq d \geq 0$ if $\rho(g,r,d)< 0$ then there does not exist an $\mathbb{R}$-divisor of degree at most $d$ and rank equal to $r$ on $G_g$.   \end{conjecture}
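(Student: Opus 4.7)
\textbf{Proof proposal for Conjecture \ref{realnonex_conj}.} The plan is to exploit the geometric reformulation of Brill--Noether existence for $\mathbb{R}$-divisors developed in this paper (covering radii of $\mathcal{N}_G$ against the polytope family $P_{1,\lambda}$) and combine it with tropical ideas in the spirit of Cools--Draisma--Payne--Robeva~\cite{CooDraPayRob12}. First I would establish a \emph{converse} to the equivalence announced after Conjecture~\ref{geomverconj_intro}: non-existence of an $\mathbb{R}$-divisor of degree at most $d$ and rank equal to $r$ on a specific $G$ translates to a strict upper bound on the covering radius of $\mathcal{N}_G$ with respect to $P_{1,\lambda(r,d)}$, where $\lambda(r,d)$ is the same parameter that appears in Conjecture~\ref{geomverconj_intro}. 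The conjecture then becomes: for each $g$, exhibit a multigraph $G$ of genus $g$ such that, whenever $\rho(g,r,d)<0$, the covering radius of $\mathcal{N}_G$ with respect to $P_{1,\lambda(r,d)}$ is strictly smaller than the bound in the Covering Radius Conjecture. By the reduction indicated in Remark~\ref{redreal_rem} (applied symmetrically to the non-existence side via Serre duality), it suffices to verify this in the range $0\le d\le g-1$.

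For the family of graphs I would take a combinatorial analogue of the CDPR ``chain of loops'': fix $g$ loops joined linearly, where the $i$-th loop is subdivided so that its two arcs consist of $a_i$ and $b_i$ parallel edges respectively, with the ratios $a_i/b_i$ chosen to satisfy an integer-genericity condition mirroring the real-genericity condition of~\cite{CooDraPayRob12}. Such a graph specializes, in the sense of Baker's specialization lemma, to the CDPR metric chain of loops, so non-existence of \emph{integer} divisors of rank $r$ and degree $\le d$ is immediate whenever $\rho(g,r,d)<0$. The concrete task is then to compute (or sharply estimate) the covering radius of the Laplacian lattice of this graph against $P_{1,\lambda}$, using the explicit description of $\mathcal{N}_G$ for chains of loops in terms of break divisors or $G$-parking functions.

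The main obstacle, which I expect to dominate the work, is upgrading non-existence from integer divisors to $\mathbb{R}$-divisors. The flexibility of $\mathbb{R}$-divisors (continuous chip-firing) means that the reduced divisor dynamics no longer respect the combinatorics used in the CDPR genericity argument. My plan is to use continuity of the rank function (Corollary~\ref{rankcont_cor}) together with a compactness argument: the degree-$d$ Picard group is a compact real torus, the locus of $\mathbb{R}$-divisors of rank $\ge r$ is closed in it, and I would try to rule out this locus directly by a polyhedral intersection computation on $\mathcal{N}_G$ against translates of $P_{1,\lambda}$. A secondary difficulty is uniformity: one wants a \emph{single} graph per genus that simultaneously rules out all pairs $(r,d)$ with $\rho(g,r,d)<0$; this forces the combinatorial genericity condition on $(a_i,b_i)$ to be stable under all the divisor classes appearing in the CDPR ``displacement'' argument, and verifying this integrally (rather than for generic real lengths) is the step where I expect to need a new idea, perhaps a quantitative diophantine approximation of the CDPR edge-length conditions by rationals with controlled denominators.
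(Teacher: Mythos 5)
The statement you are trying to prove is Conjecture \ref{realnonex_conj}, which the paper explicitly poses as an open conjecture and does not prove; there is no proof in the paper to compare against, and your text is a research programme rather than a proof. Judged on its own terms, the programme has a genuine gap exactly where you flag it, and the paper's own results show why that gap is not merely technical. Your plan is to get non-existence for integral divisors from a CDPR-style chain of loops and then ``upgrade'' to $\mathbb{R}$-divisors. But the rank of an $\mathbb{R}$-divisor is $\min_{\nu\in\mathcal{N}_G}\deg^{+}(D-\nu)-1$ with $D$ ranging over a real torus, and this maximum can be \emph{strictly larger} than the maximum over lattice points: this is precisely the distinction between the covering radius and the integral covering radius in Subsection \ref{intcovrad_subsect}, and Theorem \ref{bnKn_theo} exhibits it concretely (for $K_n$ with $n$ even the integral covering radius of $\mathcal{N}_G$ with respect to $P_1$ can fall short of the covering radius, which is why the rank guarantee there degrades from $\sqrt{g}/\sqrt{2}-1$ to $\sqrt{g}/\sqrt{8}-1$). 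So non-existence of integral divisors of rank $r$ does not transfer to $\mathbb{R}$-divisors, and the continuity/compactness argument you sketch only restates the problem: by Corollary \ref{rankcont_cor} and the intermediate value theorem on the connected torus $H_d/L_G$, non-existence of an $\mathbb{R}$-divisor of rank exactly $r$ is equivalent to an upper bound on the \emph{maximum} of the rank function in degree $d$, i.e.\ to a sharp \emph{upper} bound on ${\rm Cov}_{P_{1,\lambda}}(\mathcal{N}_G)$.

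That is the second, unaddressed obstruction: every covering-radius estimate in the paper (Proposition \ref{covcritsimpcru_prop}, Corollary \ref{covradlb_cor}, Corollary \ref{intcov_cor}) is a \emph{lower} bound, obtained by exhibiting a far point or by norm conversion from $\triangle$ and $\bar{\triangle}$; nothing in the paper produces upper bounds on ${\rm Cov}_{P_{1,\lambda}}(\mathcal{N}_G)$ for any nontrivial family, and the introduction explicitly says that understanding $L_G$ under the $P_{1,\lambda}$ distance functions is the missing ingredient. The first step of your plan (the converse of the equivalence in Section \ref{bnreal_sect}) is fine --- the same intermediate-value argument runs in both directions --- and your choice of an integrally generic chain of loops is a reasonable candidate family for the integral statement, which is essentially known via \cite{Bak08} and \cite{CooDraPayRob12}. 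But the passage to $\mathbb{R}$-divisors is the whole content of the conjecture, and as written your proposal contains no mechanism for it.
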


%The starting point of our work is a geometric interpretation of rank (\cite{}, Proposition \ref{geomrank_prop}).  

\section{Covering Radius of Discrete Sets with a Lattice Action}\label{covrad_sect}

In this section, we set up the geometric framework for the covering radius conjecture. In particular, we define the notion of covering radius of a discrete set carrying the action of a lattice and study it in cases that are relevant to Brill-Noether theory on graphs. 

For a real number $d$, let $H_{d}=\{ (x_1,\dots,x_n) \in \mathbb{R}^n|~\sum_i x_i=d\}$.  Let $C$ be a convex body (a compact, convex set) in $H_0$ containing the origin (referred to as the centre of $C$). The convex body $C$ induces a distance $d_C: H_0 \times H_0 \rightarrow \mathbb{R}_{\geq 0}$ (see \cite[Chapter 4, Page 103]{Cas71}) defined as:
 
 \begin{center}$d_C({\bf p_1},{\bf p_2})={\rm min}\{\mu|~{\bf p_2} \in {\bf p_1}+\mu \cdot C\}$. \end{center}
 
In general, the function $d_C$ satisfies all properties of a metric except symmetry: $d_C({\bf p_1},{\bf p_2})$ need not be equal to  $d_C({\bf p_2},{\bf p_1})$. However, if $C$ is centrally symmetric, i.e. $-{\bf x} \in C$ for all ${\bf x} \in C$ then $d_C({\bf p_1},{\bf p_2})$ is indeed a metric.  Some instances of $C$ that arise in our context are the standard simplices $\triangle,\bar{\triangle}$ in $H_0$ defined as:
          \begin{center}   $\triangle={\rm CH} ({\bf b_1},\dots, {\bf b_n})$ where ${\bf b_i}=(n-1) \cdot {\bf e_i}-\sum_{j \neq i}{\bf e_j}$, \end{center}
 
 where ${\rm CH}(.)$ is the convex hull and ${\bf e_1},\dots,{\bf e_n}$ are the standard basis vectors of $\mathbb{R}^n$. The standard simplex $\bar{{\bf \triangle}}$ is defined as $-\triangle$. Note that the origin is contained in both $\triangle$ and $\bar{\triangle}$.  As in \cite{AmiMan10}, we refer to the corresponding distance functions $d_{\triangle}$ and $d_{\bar{\triangle}}$ as the \emph{simplicial distance functions}.  More generally, we will deal with Minkowski sums of dilates of these two standard simplices, i.e. $\alpha \cdot \triangle+\bar{\alpha} \cdot \bar{\triangle}$ where $\alpha,~\bar{\alpha} \in \mathbb{R}_{\geq 0}$.
 
 \begin{remark}\label{simpdisform_rem}
 \rm By \cite[Lemma 4.7]{AmiMan10}, the simplicial distance functions $d_{\triangle}$ and $d_{\bar{\triangle}}$ are given by the following formulas:
 
 \begin{center}
    $d_{\triangle}({\bf p},{\bf q})=|{\rm min}_i (q_i-p_i)|$,\\ 
    $d_{\bar{\triangle}}({\bf p},{\bf q})=|{\rm max}_i (q_i-p_i)|$
  \end{center}  
    
     for any ${\bf p}=(p_1,\dots,p_n)$ and ${\bf q}=(q_1,\dots,q_n)$ in $H_0$.

 \end{remark}

  Let $\Lambda$ be a lattice of full rank in $H_0$. Note that the lattice $\Lambda$ acts on $H_d$, for any $d \in \mathbb{R}$, by translation.  Let $T$ be a subset of $H_d$, for some real number $d$, that is discrete with respect to the Euclidean topology and inheriting the action of $\Lambda$ on $H_d$ with finitely many orbits. We equip $H_d$ with a distance function $h_{C,T}: H_d \rightarrow \mathbb{R}$ with respect to $C$ and $T$ (this generalises the function $h_{C,\Lambda}$ as defined in \cite[Chapter 4.4, Page 119]{Cas71}) as follows.

\begin{center}$h_{C,T}({\bf p})={\rm min}_{\mu \in T}d_C({\bf p},\mu)$. \end{center}
%{\color{blue} make sure the order of arguments for $d_C$ is consistent everywhere}

\begin{remark}\label{hct_rem}
\rm
Note that $h_{C,T}$ is a $\Lambda$-periodic, continuous function on $H_d$. Hence, $h_{C,T}$ induces a continuous function on the (topological) torus $H_{d}/{\sim_{\Lambda}}$. Thus, it is bounded and attains its infimum and supremum. Its minimum is zero and is attained at points in $T$.  We define its maximum to be the covering radius of $T$ with respect to $C$.   \qed
\end{remark}

\begin{definition}{({\bf Covering Radius})}
The maximum value of the function $h_{C,T}: H_{d} \rightarrow \mathbb{R}$ is called the covering radius of $T$ with respect to $C$ and is denoted by ${\rm Cov}_{C}(T)$. 
\end{definition}

\begin{remark}
\rm The covering radius can also be defined as the maximum circumradius (with respect to $C$) over all the Voronoi cells of $T$ with respect to the distance function induced by $C$. \qed
\end{remark}
 \begin{remark}
\rm  Covering radii of lattices (with respect to the Euclidean and polyhedral distance functions) have been widely studied in various contexts, for instance the geometry of numbers and coding theory \cite{ConSlo13}.  \qed
 \end{remark}

 The following instances play a key role in the context of Brill-Noether theory of graphs.

\begin{enumerate}
\item  $\Lambda=L_G$, the Laplacian lattice of an undirected connected multigraph $G$ of genus $g$, $T=\mathcal{N}_G \subseteq H_{g-1}$, where $\mathcal{N}_G$ is the set of divisors of degree $g-1$ and rank $-1$. By definition, this set carries the action of $L_G$. The number of orbits of this action  is precisely the number of acyclic orientations on $G$ with a unique sink at a fixed vertex $v$. Each orbit corresponds to a divisor class and has a unique $v$-reduced representative \cite[Section 3.1]{BakNor07}.  
 This unique $v$-reduced representative is the divisor whose coefficient at any vertex is its outdegree (with respect to the acyclic orientation or equivalently, a permutation of the vertices inducing it) minus one.  The convex body $C$ is of the form $\triangle+\lambda{\bar{\triangle}}$ for some non-negative real number $\lambda$.

\item $\Lambda=T=L_G \subset H_0$ and $C=\triangle+\lambda{\bar{\triangle}}$ for some $\lambda \in \mathbb{R}_{\geq 0}$. The case where $C=\triangle$ and $C=\bar{\triangle}$ has been studied in \cite{AmiMan10}.

\item  $\Lambda=L_G$ and $T={\rm Crit}_{\triangle}(L_G)$ or $T={\rm Crit}_{\bar{\triangle}}(L_G)$ where ${\rm Crit}_{\triangle}(L_G)$ and ${\rm Crit}_{\bar{\triangle}}(L_G)$ are the set of local maxima of the functions $h_{\triangle, L_G}$ and $h_{\bar{\triangle},L_G}$ respectively \cite{AmiMan10}.    For a point ${\bf p}=(p_1,\dots,p_n) \in \mathbb{R}^n$,  let $\pi_0({\bf p}):={\bf p}-\sum_i p_i/n \cdot (1,\dots,1)$ be the orthogonal projection of ${\bf p}$ onto the hyperplane $H_0$.  In the following example and the upcoming subsection, we extensively use the following version of \cite[Lemma 4.11]{AmiMan10}.

\begin{proposition}\cite[Lemma 4.11]{AmiMan10}\label{critchar_lem}
The set ${\rm Crit}_{\triangle}(L_G)$ is equal to  $-\pi_0(\mathcal{N}_G)$.
\end{proposition}

Since every divisor $D$ in $\mathcal{N}_G$ has rank minus one, we know that every  divisor $D'$ that is linearly equivalent to $D$ has negative coefficient at some vertex. Hence, $|{\min}_i(\pi_0(D'))_i|$ is at least $m/n$. Furthermore, for any vertex $v$ and a divisor class in $\mathcal{N}_G$,  its $v$-reduced representative $D_{\mathcal{O}}$ satisfies $|{\min}_i(\pi_0(D_{\mathcal{O}}))_i|=m/n$. Hence, by Remark \ref{simpdisform_rem} and Proposition \ref{critchar_lem},  it follows that the value of $h_{\triangle, L_G}$ at every point in ${\rm Crit}_{\triangle}(L_G)$ is $m/n$ \footnote {This is essentially the notion of uniformity \cite[Definition 2.12] {AmiMan10}.}.  Hence, the covering radius of $L_G$ with respect to $\triangle$ is $m/n$.

%Using Proposition \ref{critchar_lem},  
 Another important property of $L_G$ is \emph{reflection invariance}, i.e. there is an element $t \in H_0$ such that  ${\rm Crit}_{\triangle}(L_G)=-{\rm Crit}_{\triangle}(L_G)+t$, we refer to \cite[Theorem 6.1]{AmiMan10} for a stronger version of this statement. Furthermore, one choice of $t$ is $\pi_0(K_G)$ where $K_G$ is the canonical divisor of $G$. Reflection invariance is a consequence of Proposition \ref{critchar_lem} and the observation that the canonical divisor $K_G$ can be expressed as the sum of two reduced divisors in $\mathcal{N}_G$: one associated to a permutation and the other associated to its opposite permutation. Reflection invariance combined with the observation that ${\rm Crit}_{\bar{\triangle}}(L_G)=-{\rm Crit}_{\triangle}(L_G)$ implies that  the covering radius of $L_G$ with respect to $\bar{\triangle}$ is also $m/n$.  Note that \cite{AmiMan10} uses the terminology ${\rm Crit}(L_G)$ for ${\rm Crit}_{\triangle}(L_G)$.

%As we shall see in Subsection \ref{normconvlevel_subsect}, this is closed related to the first example. 
\end{enumerate}

\begin{example}
\rm For the complete graph $K_n$, the Laplacian lattice $L_{K_n}$ is a sublattice of $A_{n-1}$ generated by $(n-1,-1,\dots,-1),(-1,n-1,-1,-1,\dots,-1),\\\dots,(-1,-1,\dots,-1,n-1)$ and has index $[A_{n-1}:L_G]=n^{n-2}$.  The set $\mathcal{N}_G \subset H_{g-1}$ of non-special divisors is $\{\nu_{\sigma}+L_G\}_{\sigma \in S_n}$ where $\nu_\sigma=\sigma((n-2,n-3,n-4,\dots,-1))$ and each element of $\mathcal{N}_G$ has degree $\binom{n-1}{2}-1$. Using Proposition \ref{critchar_lem}. the set ${\rm Crit}_{\triangle}(L_G) \subset H_0$ is $\{c_{\sigma}+L_G\}_{\sigma \in S_n}$ where $c_\sigma=\sigma(((n-1)/2,(n-3)/2,(n-5)/2,\dots,-(n-1)/2))$.  Furthermore, in this case ${\rm Crit}_{\triangle}(L_G)={\rm Crit}_{\bar{\triangle}}(L_G)$. The covering radii of $L_G$, ${\rm Crit}_{\triangle}(L_G)$ and ${\rm Crit}_{\bar{\triangle}}(L_G)$ are all equal to $(n-1)/2$ (see  Proposition \ref{critchar_lem}, the paragraph following it and Proposition \ref{covcritsimpcru_prop}). 

Consider the Petersen graph (the Kneser graph $K_{5,2}$),  the standard basis vectors of $\mathbb{Z}^{10}$ are labeled by its vertices, i.e. by subsets of size two in $[1,\dots,5]$ and hence, we denote a standard basis vector by $e_{i,j}$ for $i \neq j$, $i,~j \in [1,\dots,5]$.  
The Laplacian lattice of the Petersen graph $K_{5,2}$ is a sublattice generated by the vectors $3 \cdot e_{\sigma(1),\sigma(2)}-e_{\sigma(3),\sigma(4)}-e_{\sigma(4),\sigma(5)}-e_{\sigma(3),\sigma(5)}$ over all permutations of $[1,\dots,5]$. Its index (as a sublattice of $A_{9}$) is 2000 \cite[Page 285]{HolShe93}. It has 16,680 acyclic orientations and the number of orbits of the action of its automorphism group on the set of acyclic orientations is 168 \cite{CamGla12}. The covering radii of $L_{K_{5,2}}$, ${\rm Crit}_{\triangle}(L_{K_{5,2}})$ and ${\rm Crit}_{\bar{\triangle}}(L_{K_{5,2}})$ are all $3/2$.  \qed

\end{example}

%\begin{remark}
%\rm  We will primarily work with the unit ball of the $\ell_1$-norm rather than with it directly it aids the general case (arbitrary $d$) better.
 %\end{remark}

\subsection{Covering radius of  ${\rm Crit}_{\triangle}(L_G)$ with respect to $\triangle$ and $\bar{\triangle}$}

In the following, we compute the covering radius of ${\rm Crit}_{\triangle}(L_G),~{\rm Crit}_{\bar{\triangle}}(L_G)$ and $\mathcal{N}_G$ with respect to the distance induced by the standard simplices $\triangle$ and $\bar{\triangle}$.

\begin{proposition}\label{covcritsimpcru_prop} The covering radii of ${\rm Crit}_{\triangle}(L_G)$ with respect to  $\triangle$ and $\bar{\triangle}$ are both equal to $m/n$, where $m$ is the number of edges and $n$ is the number of vertices  of $G$.   \end{proposition}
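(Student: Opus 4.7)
The plan is to handle the $\triangle$-case first and then reduce the $\bar{\triangle}$-case to it by a reflection argument. The key input is the description of ${\rm Crit}_{\triangle}(L_G)$ from \cite{AmiMan10}: every critical point is of the form $c_O+\mu$ where $O$ is an acyclic orientation of $G$, $\mu \in L_G$, and $c_O={\rm outdeg}_O-(m/n){\bf 1}$. Using $\triangle=\{x \in H_0: x_i \geq -1\}$, the distance function simplifies to $d_{\triangle}({\bf p_1},{\bf p_2})=\max_i (p_1-p_2)_i$, and analogously $d_{\bar{\triangle}}({\bf p_1},{\bf p_2})=d_{\triangle}(-{\bf p_1},-{\bf p_2})$.

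For the lower bound ${\rm Cov}_{\triangle}({\rm Crit}_{\triangle}(L_G)) \geq m/n$, I would take ${\bf 0}$ as witness: for any critical point $c_O+\mu$, a direct computation gives $d_{\triangle}({\bf 0},c_O+\mu)=m/n-\min_i({\rm outdeg}_O+\mu)_i$. The integer divisor $\nu_O+\mu={\rm outdeg}_O+\mu-{\bf 1}$ is linearly equivalent to the non-special divisor $\nu_O$ of degree $g-1$ and rank $-1$, hence not effective, so $\min_i ({\rm outdeg}_O+\mu)_i \leq 0$ and $d_{\triangle}({\bf 0},c_O+\mu)\geq m/n$ for every critical point.

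For the upper bound ${\rm Cov}_{\triangle}({\rm Crit}_{\triangle}(L_G)) \leq m/n$, given ${\bf p}\in H_0$ I would produce an acyclic orientation $O$ and $\mu \in L_G$ such that ${\rm outdeg}_O+\mu \geq {\bf p}$ coordinate-wise, which immediately yields $d_{\triangle}({\bf p},c_O+\mu)\leq m/n$. Fix a vertex $v$ and let $\bar{{\bf p}}$ be the $v$-reduced representative of ${\bf p}$ as an $\mathbb{R}$-divisor, so $\bar{{\bf p}}(u)\geq 0$ for $u \neq v$, $\bar{{\bf p}}(v)\leq 0$, and $\bar{{\bf p}}$ satisfies the Dhar criterion that for every nonempty $A \subseteq V\setminus\{v\}$ there exists $u \in A$ with $\bar{{\bf p}}(u) < {\rm out}_A(u)$, where ${\rm out}_A(u)$ counts edges from $u$ to $V \setminus A$. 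Running Dhar's burning algorithm from $v$ then yields an ordering $v_0=v,v_1,\ldots,v_{n-1}$ of $V$ such that $\bar{{\bf p}}(v_i)$ is strictly less than the number of edges from $v_i$ to $\{v_0,\ldots,v_{i-1}\}$ for each $i \geq 1$. Orienting every edge from later-burnt to earlier-burnt vertices produces an acyclic orientation $O$ with unique sink $v$ whose outdegree at $v_i$ equals precisely that edge count. Hence ${\rm outdeg}_O \geq \bar{{\bf p}}$, and choosing $\mu={\bf p}-\bar{{\bf p}} \in L_G$ gives the required inequality.

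Finally, for the $\bar{\triangle}$-case the identity $d_{\bar{\triangle}}({\bf p_1},{\bf p_2})=d_{\triangle}(-{\bf p_1},-{\bf p_2})$ gives ${\rm Cov}_{\bar{\triangle}}(T)={\rm Cov}_{\triangle}(-T)$ for any $L_G$-invariant set $T$. Using the involution $O \mapsto O^{-1}$ on acyclic orientations together with the identity ${\rm outdeg}_O+{\rm outdeg}_{O^{-1}}=\deg$, one computes $-c_O=c_{O^{-1}}+t$ with $t=(2m/n){\bf 1}-\deg \in H_0$, so $-{\rm Crit}_{\triangle}(L_G)$ is merely a translate of ${\rm Crit}_{\triangle}(L_G)$ and has the same covering radius. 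The main obstacle in this plan is the upper bound: one must extend the classical $v$-reduced/Dhar machinery to $\mathbb{R}$-divisors (establishing existence of a $v$-reduced representative and the strict Dhar criterion) with enough care that the constructed integer outdegrees dominate the real values of $\bar{{\bf p}}$ coordinate-wise.
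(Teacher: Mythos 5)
Your argument is correct, but it takes a genuinely different route from the paper. The paper's proof is essentially a citation-plus-symmetry argument: it imports from \cite{AmiMan10} the fact that ${\rm Cov}_{\triangle}(L_G)={\rm Cov}_{\bar{\triangle}}(L_G)=m/n$ together with \cite[Theorem 8.3]{AmiMan10} to get ${\rm Cov}_{\triangle}({\rm Crit}_{\bar{\triangle}}(L_G))=m/n$, and then uses ${\rm Crit}_{\bar{\triangle}}(L_G)=-{\rm Crit}_{\triangle}(L_G)$ and reflection-invariance of the Laplacian lattice to transfer the value to ${\rm Crit}_{\triangle}(L_G)$; a companion argument (Proposition \ref{covcritsimp_prop}) gets the upper bound from ${\rm Crit}_{\triangle}(L_G)$ being a union of translates of $L_G$ and exhibits $\pi_0(K_G)$ as a witness for the lower bound. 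You instead re-derive everything from the explicit description of the critical points as $c_O+\mu$ over acyclic orientations: your lower bound uses the origin as witness and only the non-effectivity of the non-special divisors $\nu_O+\mu$ (a cleaner and more combinatorial certificate than the paper's uniformity/local-maxima argument), and your upper bound reconstructs, via Dhar's burning algorithm, the fact that every point of $H_0$ is within $\triangle$-distance $m/n$ of some $c_O+\mu$ rather than quoting ${\rm Cov}_{\triangle}(L_G)=m/n$. Your orientation-reversal computation $-c_O=c_{O^{-1}}+t$ with $t=-\pi_0(K_G)$ also makes explicit the translation that the paper only asserts abstractly. The one point you must still discharge is the extension of the $v$-reduced/Dhar machinery to $\mathbb{R}$-divisors, which you correctly flag; this is routine, since linear equivalence moves coordinates by integers, so one can write ${\bf p}=\lfloor {\bf p}\rfloor+\epsilon$ with $\epsilon\in[0,1)^n$, take the usual $v$-reduced representative of the integer divisor $\lfloor {\bf p}\rfloor$, and observe that adding $\epsilon$ preserves both non-negativity outside $v$ and the strict Dhar inequalities (because the integer criterion gives slack at least $1$). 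With that filled in, your proof is complete and self-contained, at the cost of invoking the acyclic-orientation characterisation of ${\rm Crit}_{\triangle}(L_G)$ in both directions, whereas the paper's proof is shorter but leans entirely on the earlier covering-radius computations in \cite{AmiMan10}.
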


\begin{proof}
We start by noting that the covering radii of $L_G$ with respect to $\triangle$ and $\bar{\triangle}$ are both equal to $m/n$.  This is a consequence of Proposition \ref{critchar_lem} and the fact that $L_G$ is reflection invariant.  
 Combining this with \cite[Theorem 8.3]{AmiMan10} (this is essentially a consequence of Proposition \ref{sigmachar_prop})  yields ${\rm Cov}_{\bar{\triangle}}({\rm Crit}_{\triangle}(L_G))={\rm Cov}_{\triangle}({\rm Crit}_{\bar{\triangle}}(L_G))=m/n$. Next, we note that since $L_G$ is a lattice and $\bar{\triangle}=-\triangle$, we have ${\rm Crit}_{\bar{\triangle}}(L_G)=-{\rm Crit}_{\triangle}(L_G)$.  Since the Laplacian lattice is reflection invariant, we have ${\rm Crit}_{\triangle}(L_G)=-{\rm Crit}_{\triangle}(L_G)+t={\rm Crit}_{\bar{\triangle}}(L_G)+t$ for some $t \in H_0$. Since the covering radius is preserved by translation, we conclude that ${\rm Cov}_{\triangle}({\rm Crit}_{\triangle}(L_G))=m/n$.
  \end{proof}

  \begin{remark}
  \rm The fact that the covering radius of $L_G$ with respect to $\triangle$ is equal to $m/n$ also follows from the Riemann-Roch theorem for $\mathbb{R}$-divisors and the fact that elements in $\mathcal{N}_G$ have rank minus one.  Since $d_{\triangle}({\bf p},{\bf q})={\rm max}_i (p_i-q_i)$,  it is also equivalent to every $\mathbb{R}$-divisor of degree zero being linearly equivalent to an $\mathbb{R}$-divisor where every coefficient is at most $m/n$ \footnote{We thank the anonymous referee for this remark.}. We shall omit the details here.  \qed

  \end{remark}

%  {\color{red} include a remark in the appendix that $r(D) \leq 0$ for every $\mathbb{R}$-divisor  of degree strictly greater than $g+n$}  
  
%%%Application to Dense Graphs.

Recall that for a point ${\bf p}=(p_1,\dots,p_n) \in \mathbb{R}^n$,  let  $\pi_0({\bf p}):={\bf p}-\sum_i p_i/n \cdot (1,\dots,1)$. We refine Proposition \ref{covcritsimpcru_prop} to describe a coset of $L_G$ where the maximum of $h_{\triangle,{\rm Crit}_{\triangle}(L_G)}$ is attained. Along the way, we give another proof of the fact that ${\rm Cov}_{\triangle}({\rm Crit}_{\triangle}(L_G))=m/n$.  Recall that by Proposition \ref{critchar_lem},   ${\rm Crit}_{\triangle}(L_G)$ is equal to $-\pi_0(\mathcal{N}_G)$ and hence,  ${\rm Crit}_{\triangle}(L_G)$ is a finite union of orbits under the action of $L_G$ on $H_0$. Hence, ${\rm Crit}_{\triangle}(L_G)$ is a finite union of translates of $L_G$.

\begin{proposition}\label{covcritsimp_prop} The covering radius of ${\rm Crit}_{\triangle}(L_G)$ with respect to  $\triangle$ is equal to $m/n$ and the function  $h_{\triangle,{\rm Crit}_{\triangle}(L_G)}$ attains a maximum at $\pi_0(K_G)+L_G$ where $K_G=\sum_{v \in V} ({\rm val}(v)-2)(v)$ (where ${\rm val}(v)$ is the valency of $v$) is the canonical divisor of $G$. \end{proposition}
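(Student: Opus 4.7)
The plan is to combine a short polyhedral identity with Riemann-Roch duality for non-special divisors, using the known correspondence between ${\rm Crit}_{\triangle}(L_G)/L_G$ and $\mathcal{N}_G/L_G$ to reduce the problem to the $\triangle$-covering radius of the Laplacian lattice itself.

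First, I would write down the half-space description $\triangle=\{{\bf x}\in H_0\mid x_v\geq -1\text{ for all }v\in V\}$, from which $d_{\triangle}({\bf p},{\bf q})=\max_{v\in V}(p_v-q_v)$ on $H_0$. Using that $\pi_0$ is linear and $\pi_0(D_1-D_2)=\pi_0(D_1)-\pi_0(D_2)$ whenever the two divisors have the same degree, this yields, for any divisor $N$ of degree $g-1$,
\[
\min_{\mu\in L_G}d_{\triangle}\bigl(\pi_0(K_G),\pi_0(N)+\mu\bigr)=h_{\triangle,L_G}\bigl(\pi_0(K_G-N)\bigr).
\]

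For the upper bound ${\rm Cov}_{\triangle}({\rm Crit}_{\triangle}(L_G))\leq m/n$ I would observe that ${\rm Crit}_{\triangle}(L_G)$ is an $L_G$-invariant union of cosets, so the distance from any ${\bf p}\in H_0$ to any single critical coset already equals an evaluation of $h_{\triangle,L_G}$, and is hence at most ${\rm Cov}_{\triangle}(L_G)=m/n$, a value recorded in \cite[Section 8.3]{AmiMan10}. For the lower bound at ${\bf p}=\pi_0(K_G)$ I would invoke the bijection from \cite{AmiMan10} between non-special divisor classes and critical cosets, realised by $N+L_G\mapsto\pi_0(N)+L_G$. Every critical coset thus has a representative $\pi_0(N)$ with $N\in\mathcal{N}_G$; by the Baker-Norine Riemann-Roch theorem, $K_G-N$ is also non-special, so $\pi_0(K_G-N)$ lies in a critical coset and therefore satisfies $h_{\triangle,L_G}(\pi_0(K_G-N))=m/n$ (critical points achieve the covering radius, see \cite{AmiMan10}). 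Substituting into the displayed identity shows the distance from $\pi_0(K_G)$ to \emph{every} critical coset is exactly $m/n$, so the minimum is also $m/n$.

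The two directions together give $h_{\triangle,{\rm Crit}_{\triangle}(L_G)}(\pi_0(K_G))=m/n$, which places the maximiser at $\pi_0(K_G)+L_G$ and, combined with the upper bound above, provides the advertised independent derivation of ${\rm Cov}_{\triangle}({\rm Crit}_{\triangle}(L_G))=m/n$. The main obstacle is not any single calculation but the appeal to the $\pi_0$-bijection between $\mathcal{N}_G/L_G$ and ${\rm Crit}_{\triangle}(L_G)/L_G$ from \cite{AmiMan10}; once that is cited, the Riemann-Roch involution $N\leftrightarrow K_G-N$ on $\mathcal{N}_G$ carries the argument. A self-contained alternative would parametrise critical cosets via acyclic orientations with a unique sink and verify combinatorially that $N\mapsto K_G-N$ corresponds to reversing the orientation, which gives the same conclusion.
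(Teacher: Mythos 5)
Your proposal is correct and follows essentially the same route as the paper: the upper bound comes from ${\rm Cov}_{\triangle}({\rm Crit}_{\triangle}(L_G))\leq{\rm Cov}_{\triangle}(L_G)=m/n$, and the lower bound comes from showing $\pi_0(K_G)$ is at $\triangle$-distance at least $m/n$ from every critical coset using the involution fixing the critical set together with uniformity of $L_G$. The only difference is presentational --- you justify the symmetry of ${\rm Crit}_{\triangle}(L_G)$ about $\pi_0(K_G)$ by passing through $\mathcal{N}_G$ and the Riemann--Roch involution $N\mapsto K_G-N$, where the paper asserts the reflection invariance of the critical set directly; both rest on the same facts from \cite{AmiMan10}.
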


\begin{proof} Note that ${\rm Crit}_{\triangle}(L_G)$ is a  finite union of translates of $L_G$ and that the covering radius is preserved by translation. Furthermore, the covering radius of any translate  $c+L_G$ where $c \in {\rm Crit}_{\triangle}(L_G)$ is at least the covering radius of ${\rm Crit}_{\triangle}(L_G)$.  Hence, the covering radius of ${\rm Crit}_{\triangle}(L_G)$ with respect to $\triangle$ is upper bounded by the corresponding covering radius of $L_G$ which in turn is equal to $m/n$ (see the paragraph following Proposition \ref{critchar_lem}).  Hence, ${\rm Cov}_{\triangle}({\rm Crit}_{\triangle}(L_G)) \leq m/n$.  

Next, we show that ${\rm Cov}_{\triangle}({\rm Crit}_{\triangle}(L_G)) \geq m/n$ by constructing a point ${\bf p} \in H_0$ such that $h_{\triangle,{\rm Crit}_{\triangle}(L_G)}({\bf p}) \geq m/n$. For this, consider ${\bf p}=\pi_0(K_G)$ where $K_G$ is the canonical divisor of $G$. We claim that $h_{\triangle,{\rm Crit}_{\triangle}(L_G)}({\bf p}) \geq m/n$. Note that since  ${\rm Crit}_{\triangle}(L_G)$ is reflection invariant with $t=\pi_0(K_G)={\bf p}$ (cf. the paragraph following Proposition \ref{critchar_lem}), we have $\bar{c}:={\bf p}-c \in {\rm Crit}_{\triangle}(L_G)$ for all $c \in  {\rm Crit}_{\triangle}(L_G)$. Since ${\rm Crit}_{\triangle}(L_G)$ is the set of local maxima of $h_{\triangle,L_G}$ and  the value of $h_{\triangle,L_G}$ is equal to $m/n$ at all points in ${\rm Crit}_{\triangle}(L_G)$. Thus, we have $d_{\triangle}(\bar{c},O) \geq m/n$ where $O$ is the origin. Hence, $d_{\triangle}({\bf p},c)=d_{\triangle}(\bar{c},O) \geq m/n$ for all  $c \in  {\rm Crit}_{\triangle}(L_G)$.
\end{proof}

Finally, we note that since ${\rm Crit}_{\triangle}(L_G)=\pi_0(\mathcal{N}_G)$ (Proposition \ref{critchar_lem}) and that ${\rm Crit}_{\bar{\triangle}}(L_G)={\rm Crit}_{\triangle}(L_G)-\pi_0(K_G)$, their covering radii are equal with respect to any fixed convex body. 
 Let $\pi_{g-1}(K_G)=K_G-(g-1)/n \cdot (1,\dots,1)$ be the orthogonal projection of $K_G$ onto $H_{g-1}$ (here, $K_G$ is regarded as a point in $\mathbb{R}^n$). 

\begin{proposition}\label{covradeq_prop}  For any convex body $C$, the covering radius of the three sets ${\rm Crit}_{\triangle}(L_G),~{\rm Crit}_{\bar{\triangle}}(L_G)$ and $\mathcal{N}_G$ with respect to $C$ are equal.  For $C=\triangle$, the functions   $h_{\triangle,{\rm Crit}_{\triangle}(L_G)}$,  $h_{\triangle,{\rm Crit}_{\bar{\triangle}}(L_G)}$ and $h_{\triangle,\mathcal{N}_G}$, attain their maximum over the cosets $\pi_{0}(K_G)+L_G$, $L_G$ and $\pi_{g-1}(K_G)+L_G$ respectively. 
\end{proposition}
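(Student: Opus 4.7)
The plan is to observe that the three sets $\mathcal{N}_G$, ${\rm Crit}_{\triangle}(L_G)$, and ${\rm Crit}_{\bar{\triangle}}(L_G)$ are translates of one another (across or within the parallel hyperplanes $H_0$ and $H_{g-1}$), and then invoke the translation invariance of the distance function $d_C$. Two of the relevant translations are already recorded just before the statement: first, $\pi_0(\mathcal{N}_G) = {\rm Crit}_{\triangle}(L_G)$ exhibits $\mathcal{N}_G$ as the translate of ${\rm Crit}_{\triangle}(L_G)$ by $c_1 := \tfrac{g-1}{n}(1,\ldots,1)$; second, ${\rm Crit}_{\bar{\triangle}}(L_G) = {\rm Crit}_{\triangle}(L_G) - \pi_0(K_G)$ is a translation entirely within $H_0$, following the reflection invariance of $L_G$ argued in the proof of Proposition \ref{covcritsimpcru_prop}.

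Next I would record the elementary fact that if $T' = T + v$ with both $T, T'$ lying in a common translate of $H_0$, then $h_{C,T'}({\bf p}+v) = h_{C,T}({\bf p})$ for every ${\bf p}$ in the ambient hyperplane of $T$. This is immediate from $d_C({\bf p}_1,{\bf p}_2)$ depending only on the difference ${\bf p}_2 - {\bf p}_1 \in H_0$, together with the observation that each $H_d$ is a rigid translate of $H_0$ along the direction $(1,\ldots,1)$, so the minimal $\mu$ with ${\bf p}_2 - {\bf p}_1 \in \mu \cdot C$ is well defined on every $H_d \times H_d$. Taking suprema over ${\bf p}$ gives ${\rm Cov}_C(T) = {\rm Cov}_C(T')$, and applying this to the two translations above yields the equality of all three covering radii for any convex body $C$.

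For the second assertion I specialise to $C = \triangle$ and begin from Proposition \ref{covcritsimp_prop}, which identifies $\pi_0(K_G) + L_G$ as a coset along which $h_{\triangle, {\rm Crit}_{\triangle}(L_G)}$ attains its maximum. Since the maximising coset is transported along with the set under translation, shifting by $c_1$ takes it to $(\pi_0(K_G) + c_1) + L_G$, and a direct computation $\pi_0(K_G) + \tfrac{g-1}{n}(1,\ldots,1) = K_G - \tfrac{g-1}{n}(1,\ldots,1) = \pi_{g-1}(K_G)$ produces the claimed coset $\pi_{g-1}(K_G) + L_G$ for $h_{\triangle, \mathcal{N}_G}$. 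Shifting instead by $-\pi_0(K_G)$ sends $\pi_0(K_G) + L_G$ to $L_G$, producing the claimed coset for $h_{\triangle, {\rm Crit}_{\bar{\triangle}}(L_G)}$.

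I do not anticipate a genuine obstacle: the substantive content is already contained in Proposition \ref{covcritsimp_prop}, and what remains is the bookkeeping of how the maximising coset transforms under these translations between $H_0$ and $H_{g-1}$. The only point deserving care is to confirm that the shift between $H_0$ and $H_{g-1}$ is along $(1,\ldots,1)$, which is perpendicular to $H_0$ (hence to every $\mu \cdot C$), so that no distortion of $d_C$ is introduced.
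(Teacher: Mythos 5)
Your proposal is correct and follows essentially the same route as the paper, which justifies the proposition by the same two translation identities ${\rm Crit}_{\triangle}(L_G)=\pi_0(\mathcal{N}_G)$ and ${\rm Crit}_{\bar{\triangle}}(L_G)={\rm Crit}_{\triangle}(L_G)-\pi_0(K_G)$ together with translation invariance of $d_C$, and obtains the maximising cosets by transporting the coset $\pi_0(K_G)+L_G$ from Proposition \ref{covcritsimp_prop} under these shifts. Your coset computation $\pi_0(K_G)+\tfrac{g-1}{n}(1,\dots,1)=\pi_{g-1}(K_G)$ is the intended one (note the paper's displayed formula for $\pi_k$ in Section 4 has a sign typo relative to its definition of $\pi_0$).
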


%The proof of Proposition \ref{covcritsimp_prop} can further be refined to show that $\pi_0(K_G)+L_G$ is the unique coset over which the maximum of $h_{\triangle,{\rm Crit}_{\triangle}(L_G)}$ is attained.

%\begin{proposition}\label{hmaxtri_prop} The distance function $h_{\triangle,{\rm Crit}_{\triangle}(L_G)}$ attains its maximum precisely over the coset $\pi_0(K_G)+L_G$.\end{proposition}

%{\color{red} 11:45 AM, 22 AUGUST, 2020:  An important proposition (Proposition \ref{critchar_lem}) has been articulated, key terminology such as uniformity and reflection invariance have been defined, the idea behind the covering radius calculation of $L_G$ with respect to $\triangle$ and $\bar{\triangle}$ has been included. This section should now be self-contained. }

%We primarily work with Version 2 of Baker's conjecture for the rest of this paper and further reformulate it in terms of the Laplacian lattice. The starting point of this geometric interpretation is an interpretation of the rank of a divisor in terms of certain polyhedral distance function on the Laplacian lattice.   

\section{The Existence Conjecture for $\mathbb{R}$-Divisors in terms of Covering Radius}\label{bnreal_sect}

%The starting point is a generalisation of Corollary \ref{degplus_corr} to divisors of arbitrary degree.  In this generalisation, Minkowski sums of dilates of $\triangle$ and $\bar{\triangle}$, i.e. polytopes of the form $\alpha \triangle+ \bar{\alpha} \bar{\triangle}$ for $\alpha,\bar{\alpha} \in \mathbb{R}_{\geq 0}$ play the role of $P_1$, the unit ball of the $\ell_1$-norm.  {\color{blue} include proposition?}. 

For non-negative real numbers $\alpha,~\bar{\alpha}$, define the polytope $P_{\alpha,\bar{\alpha}}$ to be the Minkowski sum of $\alpha \cdot \triangle$ and $\bar{\alpha} \cdot \bar{\triangle}$.  We denote $P_{\alpha,\bar{\alpha}}+{\bf p}$, for some point ${\bf p} \in \mathbb{R }^n$,  by $P_{\alpha,\bar{\alpha}}({\bf p})$. For an $\mathbb{R}$-divisor on $G$, we define $\pi_k(D)=D-({k-{\rm deg}(D))/n \cdot (1,\dots,1)}$ (here $D$ is regarded is a point in $\mathbb{R}^n$).

\begin{proposition}\label{geomrank_prop} Let $D$ be an $\mathbb{R}$-divisor of degree $d$.  If $d \leq g-1$, then $r(D)$ is equal to $k-1$ where $k$ is the minimum non-negative real such that $\pi_{g-1}(D) \in \cup_{\nu \in \mathcal{N}_G}P_{k/n,(g-1-d+k)/n}(\nu)$. If $d \geq g-1$, then its rank is equal to $d-g+k$ where $k$ is the minimum non-negative real such that $\pi_{g-1}(D) \in   \cup_{\nu \in \mathcal{N}_G} P_{(k+d-(g-1))/n, k/n}(\nu)$.
\end{proposition}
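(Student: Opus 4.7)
The plan is to translate the polytope membership into a clean divisor-theoretic statement and reduce the proposition to an \emph{extension lemma}. I describe the case $d\le g-1$; the case $d\ge g-1$ is entirely symmetric with the roles of $\triangle$ and $\bar{\triangle}$ swapped.

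The geometric starting point, writing $\mathbf{1}=(1,\dots,1)$, is the identity $\alpha\triangle=\{E-\alpha\mathbf{1}:E\ge 0,\ \deg E=\alpha n\}$, together with its dual $\bar{\alpha}\bar{\triangle}=\{-F+\bar{\alpha}\mathbf{1}:F\ge 0,\ \deg F=\bar{\alpha}n\}$, both read off directly from $b_i=ne_i-\mathbf{1}$. Taking the Minkowski sum gives
\[
P_{k/n,\ell/n}=\{E-F+\tfrac{\ell-k}{n}\mathbf{1}\,:\,E,F\ge 0,\ \deg E=k,\ \deg F=\ell\}.
\]
Comparing with $\pi_{g-1}(D)\in H_{g-1}$ and choosing $\ell=g-1-d+k$ so that the $\mathbf{1}$-shift in the Minkowski sum matches the one produced by $\pi_{g-1}$, a direct substitution yields the reformulation: $\pi_{g-1}(D)\in\bigcup_{\nu\in\mathcal{N}_G}P_{k/n,(g-1-d+k)/n}(\nu)$ if and only if there exist $\mathbb{R}$-effective $E,F$ with $\deg E=k$, $\deg F=g-1-d+k$, and $D-E+F\in\mathcal{N}_G$.

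A short linear-programming argument shows that the minimal decomposition of $D-\nu$ as a difference of effectives is given by the positive and negative parts, $E_v=(D_v-\nu_v)^+$ and $F_v=(\nu_v-D_v)^+$, so the smallest $k$ for which the polytope condition is met at a given $\nu$ is $\sum_v(D_v-\nu_v)^+$. The proposition for $d\le g-1$ therefore reduces to
\[
r(D)+1=\min_{\nu\in\mathcal{N}_G}\sum_{v\in V}(D_v-\nu_v)^+.
\]
The inequality $\le$ is immediate: for any $\nu\in\mathcal{N}_G$ the explicit $E,F$ above satisfy $D-E+F=\nu$, so $|D-E|=\emptyset$ (otherwise $D-E+F$ would be equivalent to an effective divisor), giving $r(D)+1\le\deg E=\sum_v(D_v-\nu_v)^+$. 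The inequality $\ge$ rests on the \emph{extension lemma}: for any $\mathbb{R}$-divisor $D'$ with $\deg D'\le g-1$ and $|D'|=\emptyset$, there is $\mathbb{R}$-effective $F$ of degree $g-1-\deg D'$ with $|D'+F|=\emptyset$. Applied to $D-E$ for $E\ge 0$ of degree $k=r(D)+1$ certifying the rank, it yields $F$ with $\nu:=D-E+F\in\mathcal{N}_G$, and then $\sum_v(D_v-\nu_v)^+=\sum_v(E_v-F_v)^+\le\sum_v E_v=k$.

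The extension lemma is the main technical step and where I expect the primary difficulty. For integer $D'$ I would argue by induction on $g-1-\deg D'$: the base $\deg D'=g-1$ is trivial, and the inductive step is reduced to finding a single vertex $v$ with $|D'+v|=\emptyset$. Such a $v$ is produced as follows: set $L=K_G-D'$, which has rank $g-2-\deg D'\ge 0$ by Riemann--Roch; take an effective divisor of degree $r(L)+1$ certifying $r(L)$, and peel off any vertex $v$ in its support to obtain $r(L-v)=r(L)-1$; Riemann--Roch applied to $D'+v$ then gives $r(D'+v)=-1$. To promote the lemma from integer to $\mathbb{R}$-divisors, one approximates $D'$ by rationals, clears denominators to reduce to the integer case on a rescaled graph, and passes to the limit using the Riemann--Roch theorem for $\mathbb{R}$-divisors (Theorem \ref{rrreal_theover2}) together with the continuity of rank (Corollary \ref{rankcont_cor}). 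The case $d\ge g-1$ is handled identically: the same polytope identity with $(k+d-g+1,\,k)$ in place of $(k,\,g-1-d+k)$ reduces to the existence of $E,F\ge 0$ of degrees $k+d-g+1,\,k$ with $D+F-E\in\mathcal{N}_G$, and the extension lemma applied to $D-E$ (now of degree $g-1-k\le g-1$) closes the argument.
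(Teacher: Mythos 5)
Your first half is exactly right and coincides with the paper's own argument: the identification $\tfrac{k}{n}\triangle+\tfrac{\ell}{n}\bar{\triangle}=\{E-F+\tfrac{\ell-k}{n}(1,\dots,1):E,F\ge 0,\ \deg E=k,\ \deg F=\ell\}$ turns the polytope condition into $D-\nu=E-F$, and minimising over decompositions gives $\deg^{+}(D-\nu)$; the paper does the same computation phrased via the upward cones $H^{+}(\mathbf{p})\cap H^{+}(\mathbf{q})$. Where you diverge is in what happens next: the paper simply invokes the degree-plus formula $r(D)=\min_{\nu\in\mathcal{N}_G}\deg^{+}(D-\nu)-1$ for $\mathbb{R}$-divisors (Proposition \ref{rankrealdiv_prop} and the discussion after Theorem \ref{rrreal_theover1} in Appendix \ref{rrreal_sect}, which rest on the characterisation of the Sigma region), whereas you attempt to reprove that formula from the infimum definition of rank via an extension lemma.

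That reproof has a genuine gap in the $\mathbb{R}$-divisor setting. Your extension lemma produces a degree-$(g-1)$ $\mathbb{R}$-divisor $D-E+F$ with empty linear system, and you then assert it lies in $\mathcal{N}_G$; but $\mathcal{N}_G$ is the set of \emph{integral} non-special divisors, and for $\mathbb{R}$-divisors ``degree $g-1$ and not equivalent to an effective divisor'' does not force integrality. Concretely, on the graph with two vertices joined by two edges ($g=1$, $L_G$ generated by $(2,-2)$), the divisor $(1/2,-1/2)$ has degree $g-1$ and empty linear system but is not in $\mathcal{N}_G=\{(2k+1,-2k-1)\}$; its rank is $-1/2$, not $-1$. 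So the final inequality $\min_{\nu\in\mathcal{N}_G}\deg^{+}(D-\nu)\le k$ does not follow from your construction. The surrounding machinery does not repair this: the inductive step of the extension lemma peels off unit vertices and cannot bridge a fractional degree deficit; ``clearing denominators'' does not reduce to an integral problem on a rescaled graph, because linear equivalence of $\mathbb{R}$-divisors is still by the fixed integral lattice $L_G$; and the shift by $\sum_v(v)$ in the definition $r(D)=\tilde{r}(D+\sum_v(v))$ means that ``$|D-E|=\emptyset$ with $\deg E=k$'' is not literally the certificate for $r(D)\le k-1$. Your argument is essentially a correct proof of the Baker--Norine degree-plus formula for \emph{integer} divisors; to handle $\mathbb{R}$-divisors you should instead quote Proposition \ref{rankrealdiv_prop} (equivalently, the description of the Sigma region as the union of upward cones over $\mathcal{N}_G+\sum_v(v)$), which is precisely what the paper's proof does.
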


%\section{Proof of Proposition \ref{geomrank_prop}}\label{geomrank_subsect}

%In this section, we sketch a proof of Proposition \ref{geomrank_prop}.  

\begin{proof}

We start by observing that for any ${\bf p}=(p_1,\dots,p_n), {\bf q}=(q_1,\dots,q_n)$ in $\mathbb{R}^n$, the degree plus function ${\rm deg}^{+}({\bf p}-{\bf q}):=\sum_{i: p_i>q_i} (p_i-q_i)$ at ${\bf p}-{\bf q}$ can be interpreted as follows.

For ${\bf r} \in \mathbb{R}^n$, let $H^{+}({\bf r})=\{ {\bf r'} \in \mathbb{R}^n|~{\bf r'} \geq {\bf r}\}$ where ${\bf r'} \geq {\bf r}$ is coordinatewise domination. By the definition of ${\rm deg}^{+}$, we have:
\begin{center}
${\rm deg}^{+}({\bf p}-{\bf q})={\rm min}\{ {\rm deg}({\bf r})\}_{{\bf r} \in H^{+}({\bf p}) \cap H^{+}({\bf q})}-{\rm deg}({\bf q})$.
\end{center} 

 For ${\bf t} \in \mathbb{R}^n$, we use the shorthand $d_t$ for ${\rm deg}({\bf t})$. Suppose that $d_p \geq d_q$. Remark \ref{simpdisform_rem} along with a simple calculation shows that  ${\bf r} \in H^{+}({\bf p}) \cap H^{+}({\bf q})$ is equivalent to $d_{\triangle}(\pi_{d_q}({\bf p}),\pi_{d_q}({\bf r})) \leq (d_r-d_p)/n$ and $d_{\triangle}({\bf q},\pi_{d_q}({\bf r})) \leq (d_r-d_q)/n$. By the definition of $d_{\triangle}$, this in turn is equivalent to $\pi_{d_q}({\bf r}) \in ((d_r-d_p)/n) \cdot \triangle +\pi_{d_q}({\bf p})$ and $\pi_{d_q}({\bf r}) \in ((d_r-d_q)/n) \cdot \triangle+{\bf q}$.  Hence,  ${\rm min}\{ {\rm deg}({\bf r})\}_{{{\bf r}} \in H^{+}({\bf p}) \cap H^{+}({\bf q})}$ is equal to $k+d_p$ where $k$ is the minimum non-negative real number such that $(k/n \cdot \triangle+\pi_{d_q}({\bf p})) \cap ((k+d_p-d_q)/n \cdot \triangle+{\bf q}) \neq \emptyset$.  This $k$ is in turn equal to the minimum non-negative real $\tilde{k}$ such that $\pi_{d_q}({\bf p}) \in ((\tilde{k}+d_p-d_q)/n \cdot {\triangle}+\tilde{k}/n\cdot \bar{\triangle})+{\bf q}$. 
On the other hand, suppose that $d_p \leq d_q$ then ${\rm min}\{ {\rm deg}({\bf r})\}_{{\bf r} \in H^{+}({\bf p}) \cap H^{+}({\bf q})}$  is equal to $\tilde{k}+d_q$ where $\tilde{k}$ is the minimum non-negative real such that $\pi_{d_q}({\bf p}) \in (\tilde{k}/n \cdot \triangle+(\tilde{k}+d_q-d_p)/n \cdot \bar{\triangle})+{\bf q}$.

We now specialise to ${\bf p}=D$ for some $\mathbb{R}$-divisor $D$ with degree $d$ and ${\bf q}=\nu$ for some $\nu \in \mathcal{N}_G$. Hence, $d_p=d$ and $d_q=g-1$. Using the degree-plus formula for rank of an $\mathbb{R}$-divisor (Proposition \ref{degplusrank_theo}), %{\color{red} remove the forward reference?} 
we obtain the following.

If $d \geq g-1$, then  $r(D)$ is equal to  $d-g+k'$ where $k'$ is the minimum non-negative real number such that 

\begin{center}
$\pi_{g-1}(D) \in  \cup_{\nu \in \mathcal{N}_G}(((k'+d-(g-1))/n \cdot \triangle+k'/n \cdot \bar{\triangle})+\nu)$.
\end{center}

If $d \leq g-1$, then $r(D)$ is equal to the minimum non-negative real number $k'-1$ such that 

\begin{center}
$\pi_{g-1}(D) \in  \cup_{\nu \in \mathcal{N}_G} ((k'/n \cdot \triangle+ (g-1-d+k')/n \cdot \bar{\triangle})+\nu)$.
\end{center}

This concludes the proof of Proposition \ref{geomrank_prop}.  
\end{proof}

%

%In other words, the rank $r(D)$ of $D$ is the minimum $k$ such that $\pi_{g-1}(D)$ is contained in the arrangement $\cup_{\nu \in \mathcal{N}_G}  P_{(k+1)/n,(g-d+k)/n}(\nu)$ of polytopes.

%{\color{blue} extend to $\mathbb{R}$-divisors and remove degree restriction}

%We include a proof of Proposition \ref{geomrank_prop} in Appendix \ref{geomrank_subsect}.

In \cite[Theorem 5.1.13]{Man11}, Proposition \ref{geomrank_prop} is stated only for a divisor $D$ with $0 \leq {\rm deg}(D) \leq g-1$.    Note that if $r(D)+1=k$ then $r(K_G-D)+1=g-1-{\rm deg}(D)+k$ and if $r(K_G-D)+1=k$ then $r(D)+1={\rm deg}(D)-(g-1)+k$.  These pairs are precisely the parameters $\alpha,~\bar{\alpha}$ for $P_{\alpha,\bar{\alpha}}$ in Proposition \ref{geomrank_prop}. This leads us to the following reformulation of the existence conjecture for $\mathbb{R}$-divisors (Conjecture \ref{bakconjreal}).  

\begin{conjecture}\label{geomver_conj} {\rm(\bf{Covering Radius Conjecture})} Let $\lambda \in [1/g,g]$ (recall that $g \geq 1$). The covering radii of $\mathcal{N}_G, {\rm Crit}_{\triangle}(L_G)$ and ${\rm Crit}_{\bar{\triangle}}(L_G)$ with respect to the distance function induced by $P_{1,\lambda}$ is at least $\sqrt{g/\lambda}/n$.  \end{conjecture}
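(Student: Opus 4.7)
The plan is to reduce to computing the covering radius of $\mathcal{N}_G$, since by Proposition \ref{covradeq_prop} all three sets $\mathcal{N}_G$, ${\rm Crit}_{\triangle}(L_G)$, ${\rm Crit}_{\bar{\triangle}}(L_G)$ share the same covering radius with respect to any convex body. Proposition \ref{covcritsimpcru_prop} together with Proposition \ref{covradeq_prop} already gives ${\rm Cov}_{\triangle}(\mathcal{N}_G)={\rm Cov}_{\bar{\triangle}}(\mathcal{N}_G)=m/n$, so the task is to convert these ``simplicial'' covering radii into a lower bound for ${\rm Cov}_{P_{1,\lambda}}(\mathcal{N}_G)$.

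The key intermediate step is a \emph{norm conversion inequality}: for every $p,q \in H_0$,
\[
d_{\triangle}(p,q) \ \leq\ \bigl(1+(n-1)\lambda\bigr)\, d_{P_{1,\lambda}}(p,q), \qquad d_{\bar{\triangle}}(p,q) \ \leq\ \bigl((n-1)+\lambda\bigr)\, d_{P_{1,\lambda}}(p,q).
\]
To see the first, set $\mu=d_{P_{1,\lambda}}(p,q)$, so $q-p=\mu x+\mu\lambda y$ with $x\in\triangle$, $y\in\bar{\triangle}$; by sublinearity of the gauge $d_{\triangle}(0,\cdot)$ we obtain $d_{\triangle}(p,q) \leq \mu\, d_{\triangle}(0,x)+\mu\lambda\, d_{\triangle}(0,y)$. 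The crucial computation is $d_{\triangle}(0,-{\bf b_i})=n-1$, which follows from the identity $-{\bf b_i}=\sum_{j\neq i}{\bf b_j}=(n-1)\cdot\tfrac{1}{n-1}\sum_{j\neq i}{\bf b_j}$, expressing $-{\bf b_i}$ as $n-1$ times a convex combination of vertices of $\triangle$; convexity of the gauge extends this bound to all of $\bar{\triangle}$. The second inequality is symmetric.

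Applying the first inequality at the point ${\bf p}=\pi_{g-1}(K_G)$, which realises the $\triangle$-covering radius of $\mathcal{N}_G$ by Proposition \ref{covradeq_prop}, yields $d_{P_{1,\lambda}}({\bf p},\nu) \geq (m/n)/(1+(n-1)\lambda)$ for every $\nu\in\mathcal{N}_G$; taking the analogous point for the $\bar{\triangle}$ bound and combining,
\[
{\rm Cov}_{P_{1,\lambda}}(\mathcal{N}_G) \ \geq\ \frac{m/n}{\min\bigl(1+(n-1)\lambda,\ (n-1)+\lambda\bigr)}.
\]
A direct check via the substitution $t=\sqrt{\lambda}$ (which turns both target inequalities into concave quadratics in $t$) shows that this right-hand side equals the desired $\sqrt{g/\lambda}/n$ exactly at the endpoints $\lambda\in\{1/g,g\}$ and dominates it on all of $[1/g,g]$ precisely when $m\geq n\sqrt{g}$; the dense hypothesis $g>n^2$ suffices, since then $m=g+n-1=\sqrt{g}(\sqrt{g}-n)+n-1+n\sqrt{g}>n\sqrt{g}$.

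The main obstacle is that the loss factor $1+(n-1)\lambda$ in the norm conversion is essentially sharp for arbitrary pairs $(p,q)\in H_0\times H_0$, so this route cannot give the conjecture past the dense regime as stated. Sparse graphs would require either a finer analysis exploiting the combinatorial structure of $\mathcal{N}_G$ (e.g.\ its link to acyclic orientations of $G$), or a scaling trick---passing to the dense multigraph $\Gamma\cdot G$ and transferring the bound back via $L_{\Gamma\cdot G}=\Gamma\cdot L_G$---which would give a genuine theorem but only at the cost of a stretch-factor degradation in the final estimate.
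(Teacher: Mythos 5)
Your proposal is correct for what it actually establishes and follows essentially the same route as the paper: reduce to $\mathcal{N}_G$ via Proposition \ref{covradeq_prop}, use ${\rm Cov}_{\triangle}(\mathcal{N}_G)={\rm Cov}_{\bar{\triangle}}(\mathcal{N}_G)=m/n$, derive the same norm conversion inequality ${\rm Cov}_{P_{1,\lambda}}(\mathcal{N}_G)\geq (m/n)/\min(1+(n-1)\lambda,(n-1)+\lambda)$ (your gauge-sublinearity argument and the paper's vertex computation for $P_{1,\lambda}$ in Corollary \ref{covradlb_cor} are the same calculation), and verify the resulting quadratic inequality in $\sqrt{\lambda}$ under the density hypothesis $g>n^2$. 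Note that the statement is a conjecture and the paper, like you, only proves it for dense multigraphs (Theorem \ref{bndense_intro}), handling general graphs exactly by the scaling trick $L_{\Gamma\cdot G}=\Gamma\cdot L_G$ you mention at the end (Corollary \ref{covlow_cor}), with the same stretch-factor loss.
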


 \begin{remark} \rm Note that the covering radii of  $\mathcal{N}_G, {\rm Crit}_{\triangle}(L_G)$ and ${\rm Crit}_{\bar{\triangle}}(L_G)$ with respect to any fixed convex body are equal. 
 The covering radius of $\mathcal{N}_G$ with respect to the polytope $P_{1,\lambda}$ being at least $\sqrt{g/\lambda}/n$ is equivalent to its covering radius with respect to $P_{1/\sqrt{\lambda},\sqrt{\lambda}}$ being at least $\sqrt{g}/n$.  Furthermore,  $P_{1/\sqrt{\lambda},\sqrt{\lambda}}=-P_{\sqrt{\lambda},1/\sqrt{\lambda}}$ and the covering radius of $\mathcal{N}_G$ with respect to $P_{\sqrt{\lambda},1/\sqrt{\lambda}}$ and $-P_{\sqrt{\lambda},1/\sqrt{\lambda}}$ are equal (by the reflection invariance property of the set ${\rm Crit}_{\triangle}(L_G)$). Hence, it suffices to prove the covering radius conjecture in the interval $[1,g]$ (or equivalently, $[1/g,1]$)\footnote{We thank the anonymous referee for this remark. }. \qed \end{remark}

{\bf Proof of Equivalence of Conjecture \ref{geomver_conj} and Conjecture \ref{bakconjreal}:}  ($\Leftarrow$) Given a $\lambda \in [1/g,g]$, set $r_0= \sqrt{g/\lambda}-1$ and $d_0= g+(r_0+1)(1-\lambda)-1$.  Note that $r_0,~d_0$ are both non-negative and since $r_0+1 \leq g$ and $(1-\lambda) \leq (g-1)/g$, we have $d_0 \leq 2g-2$. We verify that this pair of $r_0,~d_0$ satisfies $\rho(g,r_0,d_0) = 0$ as follows:
 
 \begin{center}
 
 $\rho(g,r_0,d_0)=g-(r_0+1)(g-d_0+r_0) =g-(r_0+1)^2 \lambda=0$.\\
 \end{center}
 
 Furthermore, we note that $g-d_0+r_0=(r_0+1)\lambda= \sqrt{g \cdot \lambda} \geq 1$.  Hence, by Conjecture \ref{bakconjreal}, there exists a divisor $D$ of degree $d_0$ and rank equal to $r_0$.  In order to apply Proposition \ref{geomrank_prop} to $D$, note that $\lambda=(g-d_0+r_0)/(r_0+1)$ and hence, $P_{\frac{r_0+1}{n},\frac{g-d_0+r_0}{n}}=P_{\frac{r_0+1}{n},\lambda \cdot (\frac{r_0+1}{n})}=(r_0+1)/n \cdot P_{1,\lambda}$.   By Proposition \ref{geomrank_prop}, this implies that $\pi_{g-1}(D)$ in $H_{g-1}$ satisfies $h_{P_{1,\lambda},\mathcal{N}_G}(\pi_{g-1}(D))=(r_0+1)/n=\sqrt{g/\lambda}/n$. Hence, the covering radius of $\mathcal{N}_G$ with respect to $P_{1,\lambda}$ is at least $\sqrt{g/\lambda}/n$. 
 
($\Rightarrow$) Suppose that $(r_0,d_0)$ is a pair of non-negative real numbers such that $\rho(g,r_0,d_0) \geq 0$. Set $\lambda=(g-d_0+r_0)/(r_0+1)$. By Remark \ref{redreal_rem}, we can assume without loss of generality that $d_0 \leq g-1$. This implies that $\lambda \geq 1$.  Furthermore, since $\rho(g,r_0,d_0) \geq 0$, we have $\lambda \leq g/(r_0+1)^2 \leq g$. Hence, $\lambda \in [1,g]$ and  Conjecture \ref{geomver_conj} implies that there exists a point ${\bf p} \in H_{g-1}$ such that $h_{P_{1,\lambda},\mathcal{N}_G}({\bf p}) \geq \sqrt{g/\lambda}/n$. Note that $0 \leq (r_0+1)/n \leq \sqrt{g/\lambda}/n$. 
  Since $h_{P_{1,\lambda},\mathcal{N}_G}: H_{g-1} \rightarrow \mathbb{R}$ is a continuous function (by Remark \ref{hct_rem}) and attains a zero, by the intermediate value theorem for continuous functions, there is a point ${\bf q} \in H_{g-1}$ such that $h_{P_{1,\lambda},\mathcal{N}_G}({\bf q})=(r_0+1)/n$.  Next, we lift it to an $\mathbb{R}$-divisor with prescribed degree and rank as follows:  consider the $\mathbb{R}$-divisor $D_{\mathbb{R}}={\bf q}-(g-1-d_0)/n \cdot (1,\dots,1)$. The degree of $D_{\mathbb{R}}$ is equal to $d_0$ and   by Proposition \ref{geomrank_prop}, it has rank $r_0$. \qed
 
% 

%\begin{theorem}\label{bndense_theo}
%Any undirected, connected, dense multigraph satisfies the existence conjecture for $\mathbb{R}$-divisors.
%\end{theorem} 

%\begin{example}\label{multikn_ex}
%\rm For a positive integer $\beta$,  let $\beta \cdot G$ be the graph on the same set of vertices as $G$ and with $\beta \cdot m(u,v)$ edges between $u$ and $v$, where $m(u,v)$ is the number of edges between $u$ and $v$ in $G$. The graph $\beta \cdot K_n$ for $\beta \geq 3$ and $n \geq 5$ satisfies the density condition in Theorem \ref{bndense_theo}. More generally, $(n+3) \cdot G$ satisfies the density condition in Theorem \ref{bndense_theo} for any connected multigraph $G$. \qed
%\end{example}
\subsection{Brill-Noether Existence for $\mathbb{R}$-Divisors on Graphs}\label{bndenseproof_subsect}

%{\color{blue} As described in the introduction}, we prove Theorem \ref{bndense_intro} by showing that $G$ satisfies Conjecture \ref{bakconjreal}. We use the fact that the covering radius of $\mathcal{N}_G$ with respect to $\triangle$ (and $\bar{\triangle}$) is equal to $m/n$ and use a norm conversion inequality from $\triangle$ (or $\bar{\triangle}$)  to $P_{1,\lambda}$ to obtain a lower bound on the covering radius of $\mathcal{N}_G$ with respect to $P_{1,\lambda}$. In order to obtain the norm conversion inequality, we first describe the vertices of $P_{\alpha,\bar{\alpha}}$. 

Our main goal in this section is to prove Theorem \ref{bnreal_intro}.  We start with a description of the polytope $P_{\alpha,\bar{\alpha}}$ for positive $\alpha$ and $\bar{\alpha}$.   Recall that the vertices of $\triangle$ are denoted by ${\bf b_1},\dots,{\bf b_n}$. For real numbers $\alpha,~\bar{\alpha}$,  let ${\bf w_{i,j}}=\alpha{\bf b}_i-\bar{\alpha}{\bf b}_j$ for $i,~j$ from $1,\dots,n$.  The coordinates of ${\bf w}_{i,j}$ (assuming $i \neq j$) are as follows.

\begin{center}
$({\bf w}_{i,j})_k=
\begin{cases}
-\alpha+\bar{\alpha},  \text{~ if $k \notin \{i,j\}$},\\
\alpha(n-1)+\bar{\alpha}, \text{~if $k=i$},\\
-\bar{\alpha}(n-1)-\alpha, \text{~if $k=j$}.
\end{cases}
$\\
\end{center}

% In the following, we describe the vertices of $P_{\alpha,\bar{\alpha}}$ for positive $\alpha$ and $\bar{\alpha}$. 

\begin{proposition}\label{verdesc_prop}
For any pair of positive real numbers $\alpha,~\bar{\alpha}$. 
The polytope $P_{\alpha,\bar{\alpha}}$ has $n(n-1)$ vertices and they are $\{{\bf w}_{i,j}\}_{i \neq j}$  over all pairs of integers $i,~j$ from $1,\dots,n$. 
\end{proposition}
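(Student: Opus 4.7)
The plan is to exploit the standard fact that for Minkowski sums of polytopes $A+B$, the vertex set is contained in the pairwise sum of the vertex sets of $A$ and $B$, and that a candidate point is an actual vertex if and only if it is the unique maximizer of some linear functional. Since $\triangle$ has vertex set $\{{\bf b_1},\dots,{\bf b_n}\}$ and $\bar{\triangle}=-\triangle$ has vertex set $\{-{\bf b_1},\dots,-{\bf b_n}\}$, every vertex of $P_{\alpha,\bar\alpha}=\alpha\triangle+\bar\alpha\bar{\triangle}$ lies in the $n^2$-element candidate set $\{{\bf w}_{i,j}:1\le i,j\le n\}$. The task is therefore to pin down exactly which of these candidates are genuine vertices and to verify distinctness.

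First I would show that each ${\bf w}_{i,j}$ with $i\neq j$ is a vertex by exhibiting an explicit supporting hyperplane. Take the linear functional $\ell({\bf x})=x_i-x_j$. A direct computation gives $\ell({\bf b_k})=n$ if $k=i$, $\ell({\bf b_k})=-n$ if $k=j$, and $\ell({\bf b_k})=0$ otherwise. Hence $\ell$ is uniquely maximized on $\alpha\triangle$ at $\alpha{\bf b_i}$ (value $n\alpha$) and, since $\ell(-\bar\alpha{\bf b_k})=-\bar\alpha\,\ell({\bf b_k})$, uniquely maximized on $\bar\alpha\bar{\triangle}$ at $-\bar\alpha{\bf b_j}$ (value $n\bar\alpha$). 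Additivity of the support function under Minkowski sum then forces the unique maximizer of $\ell$ on $P_{\alpha,\bar\alpha}$ to be ${\bf w}_{i,j}$, with value $n(\alpha+\bar\alpha)$, so ${\bf w}_{i,j}$ is a vertex.

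Next I would rule out the diagonal candidates ${\bf w}_{i,i}=(\alpha-\bar\alpha){\bf b_i}$ by displaying each one as a nontrivial convex combination of the off-diagonal candidates. Using the identity $\sum_{k}{\bf b_k}=0$, a short calculation shows
\[
\frac{1}{n-1}\sum_{j\neq i}{\bf w}_{i,j}=\Bigl(\alpha+\tfrac{\bar\alpha}{n-1}\Bigr){\bf b_i},\qquad \frac{1}{n-1}\sum_{j\neq i}{\bf w}_{j,i}=-\Bigl(\tfrac{\alpha}{n-1}+\bar\alpha\Bigr){\bf b_i}.
\]
Setting $t=\alpha/(\alpha+\bar\alpha)\in(0,1)$ and taking $t$ times the first average plus $(1-t)$ times the second gives exactly $(\alpha-\bar\alpha){\bf b_i}={\bf w}_{i,i}$, so ${\bf w}_{i,i}$ lies in the relative interior of the convex hull of the off-diagonal candidates and is not a vertex.

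Finally I would verify that the $n(n-1)$ off-diagonal ${\bf w}_{i,j}$ are pairwise distinct. If ${\bf w}_{i,j}={\bf w}_{i',j'}$, then $\alpha({\bf b_i}-{\bf b_{i'}})=\bar\alpha({\bf b_j}-{\bf b_{j'}})$; since ${\bf b_k}-{\bf b_\ell}$ has the explicit coordinate form $n({\bf e_k}-{\bf e_\ell})$, comparing supports either forces $(i,j)=(i',j')$ directly, or forces $i=j'$ and $j=i'$, which in turn forces $(\alpha+\bar\alpha)({\bf b_i}-{\bf b_j})=0$, contradicting $\alpha,\bar\alpha>0$ and $i\neq j$. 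Assembling the three steps yields the claim. The only mildly subtle step is the convex-combination identity for ${\bf w}_{i,i}$; everything else is a routine application of the Minkowski-sum / support-function formalism.
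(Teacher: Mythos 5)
Your proof is correct and follows essentially the same route as the paper's: the Minkowski-sum vertex containment, the supporting functional $\ell_{i,j}({\bf p})=p_i-p_j$ for the off-diagonal candidates, and a convex-combination argument to exclude the diagonal points ${\bf w}_{i,i}$. Your version is slightly cleaner in two respects — you use additivity of the support function on the two summands instead of evaluating $\ell_{i,j}$ on all $n^2$ candidates, and your single convex combination with weight $t=\alpha/(\alpha+\bar\alpha)$ handles the cases $\alpha>\bar\alpha$, $\alpha=\bar\alpha$, $\alpha<\bar\alpha$ uniformly where the paper splits into three cases — and you add the (worthwhile) explicit check that the $n(n-1)$ points are pairwise distinct, which the paper leaves implicit.
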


\begin{proof}

For any pair of polytopes $Q_1,~Q_2$, the vertices of the Minkowski sum are of the form ${\bf v}+{\bf v'}$ where ${\bf v}$ is a vertex of $Q_1$ and ${\bf v'}$ is a vertex of $Q_2$. Hence, every vertex of $P_{\alpha,\bar{\alpha}}$ is of the form ${\bf w}_{i,j}$ for some $i,j$. 

In the following, we show that every ${\bf w}_{i,j}$ for $i \neq j$ is a vertex.  We consider the linear functional $\ell_{i,j}({\bf p})=p_i-p_j$ where ${\bf p}=(p_1,\dots,p_n)$. We note that $\ell_{i,j}({\bf w}_{i,j})=n(\alpha+\bar{\alpha}),~\ell_{i,j}({\bf w}_{j,i})=-n(\alpha+\bar{\alpha}),~\ell_{i,j}({{\bf w}_{i,k}})=\alpha n,~\ell_{i,j}({{\bf w}_{k,i}})=-\bar{\alpha} n,~\ell_{i,j}({{\bf w}_{s,j}})=\bar{\alpha} n,~~\ell_{i,j}({{\bf w}_{j,s}})=-\alpha n$ for any $s \neq i,~k \neq j$ and $\ell_{i,j}({\bf w}_{s,k})=0$ if $s,k \notin \{i,j\}$.  Hence, $\ell_{i,j}$ is uniquely maximised at ${\bf w}_{i,j}$ and we conclude that ${\bf w}_{i,j}$ is a vertex of $P_{\alpha,\bar{\alpha}}$.

We are left with showing that ${\bf w}_{i,i}$ is not a vertex of $P_{\alpha,\bar{\alpha}}$. To see this, we note that ${\bf w}_{i,i}=(\alpha-\bar{\alpha}){\bf b_i}$.  If $\alpha=\bar{\alpha}$, then ${\bf w}_{i,i}$ is the origin $O$ and is not a vertex since $({\bf w}_{i,j}+{\bf w}_{j,i})/2=O$ for any $i \neq j$.  
Suppose $\alpha>\bar{\alpha}$,  consider the point $(\sum_{j \neq i}{\bf w_{i,j}})/(n-1)$ in $P_{\alpha,\bar{\alpha}}$ for some $i$. Since  $\sum_{k}{{\bf b}_k}=O$, we obtain $\sum_{j \neq i}{\bf w_{i,j}}/(n-1)=(\alpha+\bar{\alpha}/(n-1)){\bf b_i}$. Note that since $\alpha \geq \bar{\alpha}>0$ we have $0 < \alpha-\bar{\alpha} <\alpha< \alpha+\bar{\alpha}/(n-1)$ and that the origin is contained in $P_{\alpha,\bar{\alpha}}$. Hence, ${\bf w}_{i,i}$ is contained in the relative interior of the line segment defined by the origin and this point. Hence, it is not a vertex.  The case where $\bar{\alpha}>\alpha$ follows by a similar argument: consider the point  $(\sum_{j \neq i}{\bf w_{j,i}})/(n-1)$ and show that ${\bf w}_{i,i}$ is contained in the relative interior of the origin and this point.
\end{proof}

In order to obtain a lower bound for $h_{P_{\alpha,\bar{\alpha}},\mathcal{N}_G}$ in terms of the corresponding simplicial distance functions, we compute $d_{\triangle}(O,{\bf w}_{i,j})$ and $d_{\bar{\triangle}}(O,{\bf w}_{i,j})$ at the vertices of $P_{\alpha,\bar{\alpha}}$.

\begin{lemma}\label{distsimver_lem}
For positive real numbers $\alpha,~\bar{\alpha}$, let ${\bf w}_{i,j}$ be a vertex of $P_{\alpha,\bar{\alpha}}$.  The simplicial distance from the origin to ${\bf w}_{i,j}$ is given by $d_{\triangle}(O,{\bf w}_{i,j})=\bar{\alpha}(n-1)+\alpha$ and $d_{\bar{\triangle}}(O,{\bf w}_{i,j})=\alpha(n-1)+\bar{\alpha}$.
\end{lemma}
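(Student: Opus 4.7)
The plan is to reduce both distance computations to a clean closed-form expression for the simplicial distance from the origin to an arbitrary point of $H_0$. Specifically, I will show that for any $\mathbf{p}=(p_1,\dots,p_n) \in H_0$,
\[
d_{\triangle}(O,\mathbf{p}) = -\min_k p_k, \qquad d_{\bar{\triangle}}(O,\mathbf{p}) = \max_k p_k.
\]
Granting these formulas, the lemma is immediate: first verify that $\mathbf{w}_{i,j} \in H_0$ (summing the three coordinate types gives zero), then read off from the definition of $\mathbf{w}_{i,j}$ that $\min_k (w_{i,j})_k = -\bar{\alpha}(n-1)-\alpha$ (attained at $k=j$) and $\max_k (w_{i,j})_k = \alpha(n-1)+\bar{\alpha}$ (attained at $k=i$). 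Here I use that for positive $\alpha,\bar{\alpha}$ the ``middle'' value $-\alpha+\bar{\alpha}$ lies strictly between these extremes.

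The formula for $d_\triangle$ follows by unpacking the definition. A point $\mathbf{p} \in H_0$ lies in $\mu\triangle$ iff $\mathbf{p}=\sum_k c_k(\mu \mathbf{b}_k)$ for some convex combination $(c_k)$. Plugging in $\mathbf{b}_k = (n-1)\mathbf{e}_k - \sum_{j\neq k}\mathbf{e}_j$ yields $p_k = \mu(nc_k-1)$, so $c_k = (p_k/\mu+1)/n$. The constraint $\sum c_k = 1$ is automatic from $\mathbf{p} \in H_0$, and $c_k \geq 0$ is equivalent to $\mu \geq -p_k$. Therefore the infimum of admissible $\mu$ is exactly $-\min_k p_k$, giving the first formula. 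The second follows from $\bar{\triangle}=-\triangle$: $\mathbf{p}\in \mu\bar{\triangle}$ iff $-\mathbf{p}\in \mu\triangle$, so $d_{\bar{\triangle}}(O,\mathbf{p}) = -\min_k(-p_k) = \max_k p_k$.

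There is no real obstacle here; the statement is a direct consequence of writing out the gauge of $\triangle$ in coordinates. The only thing to be careful about is ensuring that the convex combination argument genuinely produces an equivalence (not just a necessary condition), which is the reason one also needs $\mathbf{p}\in H_0$ to conclude $\sum c_k = 1$ automatically. Once this clean formula is in hand, the lemma amounts to identifying the extreme coordinates of $\mathbf{w}_{i,j}$, which is explicit from the coordinate description given just before the statement.
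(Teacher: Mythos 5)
Your proof is correct and follows essentially the same route as the paper: both reduce the lemma to the closed-form expression $d_{\triangle}(O,{\bf p})=-\min_k p_k$, $d_{\bar{\triangle}}(O,{\bf p})=\max_k p_k$ on $H_0$ and then identify the extreme coordinates of ${\bf w}_{i,j}$ as $-\bar{\alpha}(n-1)-\alpha$ (at $k=j$) and $\alpha(n-1)+\bar{\alpha}$ (at $k=i$). The only difference is that the paper cites this formula from \cite[Lemma 4.7]{AmiMan10}, whereas you rederive it via the convex-combination (gauge) computation, which is a correct and self-contained substitute.
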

\begin{proof}

 By Remark \ref{simpdisform_rem}, the simplicial distance functions are given by the formulas $d_{\triangle}({\bf p},{\bf q})=|{\rm min}_i (q_i-p_i)|$ and  $d_{\bar{\triangle}}({\bf p},{\bf q})=|{\rm max}_i (q_i-p_i)|$ for any ${\bf p}=(p_1,\dots,p_n)$ and ${\bf q}=(q_1,\dots,q_n)$ in $H_0$. Since $n \geq 2$ and $\alpha,~\bar{\alpha}$ are non-negative, ${\rm min}_k ({\bf w}_{i,j})_k=-\bar{\alpha}(n-1)-\alpha$ and ${\rm max}_k ({\bf w}_{i,j})_k=\alpha(n-1)+\bar{\alpha}$. Using the formulas for the simplicial distance functions completes the proof.
\end{proof}

\begin{proposition}
For any point ${\bf p} \in H_0$, the distance function $d_{P_{\alpha,\bar{\alpha}}}$ at $(O,{\bf p})$ is lower bounded by $d_{\triangle}(O,{\bf p})$ and $d_{\bar{\triangle}}(O,{\bf p})$ as follows: 

\begin{center}
$d_{P_{\alpha,\bar{\alpha}}}(O,{\bf p}) \geq d_{\triangle}(O,{\bf p})/(\bar{\alpha}(n-1)+\alpha)$,\\
$d_{P_{\alpha,\bar{\alpha}}}(O,{\bf p}) \geq d_{\bar{\triangle}}(O,{\bf p})/(\alpha(n-1)+\bar{\alpha})$.

\end{center}

 These lower bounds are tight, i.e. there are points ${\bf q_1},~{\bf q_2} \in H_0$ such that $d_{P_{\alpha,\bar{\alpha}}}(O,{\bf q_1})=d_{\triangle}(O,{\bf q_1})/(\bar{\alpha}(n-1)+\alpha)$ and $d_{P_{\alpha,\bar{\alpha}}}(O,{\bf q_2})=d_{\bar{\triangle}}(O,{\bf q_2})/(\alpha(n-1)+\bar{\alpha})$.

\end{proposition}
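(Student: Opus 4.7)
The plan is to regard $d_{P_{\alpha,\bar\alpha}}(O,\cdot)$ as the Minkowski gauge of $P_{\alpha,\bar\alpha}$ and exploit the two properties shared by every such gauge along $H_0$: positive homogeneity, and one-sided subadditivity $d_C(O,x+y)\le d_C(O,x)+d_C(O,y)$. The latter holds for any convex body $C$ containing the origin because convexity of $C$ gives $\mu_1 C+\mu_2 C=(\mu_1+\mu_2)C$ for $\mu_i\ge 0$, so central symmetry of $C$ is not needed. Combining these two properties with the vertex description of $P_{\alpha,\bar\alpha}$ from Proposition \ref{verdesc_prop} and the vertex-to-origin simplicial distances from Lemma \ref{distsimver_lem} will produce both inequalities directly.

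Concretely, I would set $\mu=d_{P_{\alpha,\bar\alpha}}(O,{\bf p})$, so that ${\bf p}\in\mu\cdot P_{\alpha,\bar\alpha}$, and use Proposition \ref{verdesc_prop} to write
\[
{\bf p}=\mu\sum_{i\neq j}\lambda_{i,j}\,{\bf w}_{i,j},\qquad \lambda_{i,j}\ge 0,\ \ \sum_{i\neq j}\lambda_{i,j}=1.
\]
Applying subadditivity and homogeneity of $d_\triangle(O,\cdot)$ and then Lemma \ref{distsimver_lem} yields
\[
d_\triangle(O,{\bf p})\le \mu\sum_{i\neq j}\lambda_{i,j}\,d_\triangle(O,{\bf w}_{i,j})=\mu\bigl(\bar\alpha(n-1)+\alpha\bigr),
\]
which, upon dividing through, is exactly the first inequality. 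The second inequality follows by the identical argument with the value $d_{\bar\triangle}(O,{\bf w}_{i,j})=\alpha(n-1)+\bar\alpha$ from Lemma \ref{distsimver_lem} substituted in place.

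For tightness, both inequalities are saturated simultaneously at ${\bf q}_1={\bf q}_2={\bf w}_{i,j}$ for any fixed pair $i\neq j$: since ${\bf w}_{i,j}$ is a vertex of $P_{\alpha,\bar\alpha}$ by Proposition \ref{verdesc_prop} and is distinct from the origin, a short convexity argument (if $d_{P_{\alpha,\bar\alpha}}(O,{\bf w}_{i,j})=\mu<1$ then ${\bf w}_{i,j}=\mu\cdot({\bf w}_{i,j}/\mu)+(1-\mu)\cdot O$ expresses it as a non-trivial convex combination of two distinct points of $P_{\alpha,\bar\alpha}$, contradicting the vertex property) forces $d_{P_{\alpha,\bar\alpha}}(O,{\bf w}_{i,j})=1$, and the simplicial distances from Lemma \ref{distsimver_lem} then make the right-hand sides equal to $1$ as well. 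The only point worth flagging is the subadditivity step, which I had to check does not require $P_{\alpha,\bar\alpha}$ to be centrally symmetric; beyond that I do not anticipate a real obstacle.
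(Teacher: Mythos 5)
Your proposal is correct and follows essentially the same route as the paper: both express a point of $P_{\alpha,\bar\alpha}$ as a convex combination of the vertices ${\bf w}_{i,j}$, apply subadditivity and positive homogeneity of the simplicial gauges together with Lemma \ref{distsimver_lem}, and conclude via scaling. Your tightness step is slightly more explicit than the paper's (which only records that the maximum of $d_{\triangle}(O,\cdot)$ over $P_{\alpha,\bar\alpha}$ is attained at a vertex), but it is the same argument in substance.
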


\begin{proof}
Consider the following optimisation problems: ${\rm max}_{{\bf r} \in P_{\alpha,\bar{\alpha}}}d_{\triangle}(O,{\bf r})$ and ${\rm max}_{{\bf r} \in P_{\alpha,\bar{\alpha}}}d_{\bar{\triangle}}(O,{\bf r})$.  In the following, we note that a solution is attained at a vertex.  Suppose that ${\bf s} \in P_{\alpha,\bar{\alpha}}$ then ${\bf s}=\sum_i \lambda_i \cdot {\bf v}_i$ where $\lambda_i \geq 0$ for all $i$, $\sum_i \lambda_i=1$ and each ${\bf v}_i$ is a vertex of $P_{\alpha,\bar{\alpha}}$. By the triangle inequality property of $d_{\triangle}$ and $d_{\bar{\triangle}}$ (and their scaling equality), we have $d_{\triangle}(O,{\bf s}) \leq  \sum_i \lambda_i d_{\triangle}(O,{\bf v_i})$ and $d_{\bar{\triangle}}(O,{\bf s}) \leq  \sum_i \lambda_i d_{\bar{\triangle}}(O,{\bf v_i})$.  Hence, $d_{\triangle}(O,{\bf s}) \leq {\rm max}_i~ d_{\triangle}(O,{\bf v_i})$ and $d_{\bar{\triangle}}(O,{\bf s}) \leq {\rm max}_i~d_{\bar{\triangle}}(O,{\bf v_i})$. By Proposition \ref{verdesc_prop}, the vertices of $P_{\alpha,\bar{\alpha}}$ are of the form ${\bf w}_{i,j}$ for $i \neq j$.
 By Lemma \ref{distsimver_lem},  we have $d_{\triangle}(O,{\bf w}_{i,j})=\bar{\alpha}(n-1)+\alpha$ and $d_{\bar{\triangle}}(O,{\bf w}_{i,j})=\alpha(n-1)+\bar{\alpha}$ for any vertex of $P_{\alpha,\bar{\alpha}}$. Hence, ${\rm max}_{{\bf r} \in P_{\alpha,\bar{\alpha}}}d_{\triangle}(O,{\bf r})=\bar{\alpha}(n-1)+\alpha$ and ${\rm max}_{{\bf r} \in P_{\alpha,\bar{\alpha}}}~d_{\bar{\triangle}}(O,{\bf r})=\alpha(n-1)+\bar{\alpha}$.  For  a point ${\bf p} \in H_0$ such that $d_{P_{\alpha,\bar{\alpha}}}(O,{\bf p})=d$,  there is a unique point ${\bf s} \in P_{\alpha,\bar{\alpha}}$ such that ${\bf p}=d \cdot {\bf s}$. Since, $d_{\triangle}$ and $d_{\bar{\triangle}}$  both satisfy the scaling equality, we have:  \begin{center}$d_{\triangle}(O,{\bf p})=d \cdot d_{\triangle}(O,{\bf s}) \leq d \cdot (\bar{\alpha}(n-1)+\alpha)$ and $d_{\bar{\triangle}}(O,{\bf p})=d \cdot d_{\bar{\triangle}}(O,{\bf s}) \leq d \cdot (\alpha(n-1)+\bar{\alpha})$. \end{center}   This completes the proof. \end{proof}

This leads us to the following norm conversion inequality that lower bounds the distance function $h_{P_{\alpha,\bar{\alpha}},T}$ in terms of the distance functions $h_{\triangle,T}$ and $h_{\bar{\triangle},T}$ for any discrete subset $T$ of $H_d$.

\begin{corollary} \label{covradlb_cor}{\rm { \bf(Norm Conversion Inequality)}}
Fix an integer $d$. Let $T$ be a discrete subset of $H_d$ carrying the action (by translation) of a full rank lattice in $H_0$. For any point ${\bf p} \in H_d$:

\begin{center}
$h_{P_{\alpha,\bar{\alpha}},T}({\bf p}) \geq h_{\triangle,T}({\bf p})/(\bar{\alpha}(n-1)+\alpha)$,\\
$h_{ P_{\alpha,\bar{\alpha}},T}({\bf p}) \geq  h_{\bar{\triangle},T}({\bf p})/(\alpha(n-1)+\bar{\alpha})$,\\
${\rm Cov}_{P_{\alpha,\bar{\alpha}}}(T) \geq {\rm max}\{{\rm Cov}_{\triangle}(T)/(\bar{\alpha}(n-1)+\alpha),{\rm Cov}_{\bar{\triangle}}(T)/(\alpha(n-1)+\bar{\alpha})\}$
\end{center}
where ${\rm Cov}_C(T)$ is the covering radius of $T$ with respect to the convex body $C$.
\end{corollary}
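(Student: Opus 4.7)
The plan is to derive all three bounds directly from the pointwise distance inequalities established in the preceding proposition, using nothing more than translation invariance of $d_C$ and the definitions of $h_{C,T}$ and ${\rm Cov}_C(T)$.

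First I would record the translation invariance identity $d_C({\bf p}_1, {\bf p}_2) = d_C(O, {\bf p}_2 - {\bf p}_1)$ for any convex body $C$ containing the origin; this is immediate from the definition $d_C({\bf p}_1, {\bf p}_2) = \min\{\mu \geq 0 : {\bf p}_2 - {\bf p}_1 \in \mu \cdot C\}$, since the condition depends only on the difference ${\bf p}_2 - {\bf p}_1$. Because $T \subseteq H_d$ and ${\bf p} \in H_d$, for every $\mu \in T$ the difference $\mu - {\bf p}$ lies in $H_0$, so the preceding proposition applies with its ``${\bf p}$'' taken to be $\mu - {\bf p}$, and gives
$$d_{P_{\alpha,\bar{\alpha}}}({\bf p}, \mu) \geq \frac{d_{\triangle}({\bf p}, \mu)}{\bar{\alpha}(n-1)+\alpha} \quad \text{and} \quad d_{P_{\alpha,\bar{\alpha}}}({\bf p}, \mu) \geq \frac{d_{\bar{\triangle}}({\bf p}, \mu)}{\alpha(n-1)+\bar{\alpha}}.$$

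Next I would take the minimum over $\mu \in T$ on each side of both inequalities. Since $1/(\bar{\alpha}(n-1)+\alpha)$ and $1/(\alpha(n-1)+\bar{\alpha})$ are positive scalars, they commute with $\min_{\mu \in T}$, and after substituting the definition $h_{C,T}({\bf p}) = \min_{\mu \in T} d_C({\bf p}, \mu)$ this yields the first two displayed inequalities of the corollary. Finally, for the covering radius statement I would take the supremum over ${\bf p} \in H_d$ of both sides of each pointwise inequality. By Remark \ref{hct_rem} the function $h_{C,T}$ is $\Lambda$-periodic and continuous, hence attains its maximum on the compact quotient $H_d/{\sim_{\Lambda}}$; this maximum is by definition ${\rm Cov}_C(T)$. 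Thus ${\rm Cov}_{P_{\alpha,\bar{\alpha}}}(T) \geq {\rm Cov}_{\triangle}(T)/(\bar{\alpha}(n-1)+\alpha)$ and the analogous bound with $\bar{\triangle}$ both hold, and taking the larger of the two lower bounds produces the third inequality.

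The argument presents no real obstacle: the entire content is the pointwise bound from the preceding proposition, and the corollary is a routine propagation through the definitions of $h_{C,T}$ and ${\rm Cov}_C(T)$ via translation invariance, monotonicity of $\min$ under scaling by a positive constant, and compactness of the fundamental domain.
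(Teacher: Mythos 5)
Your proof is correct and follows exactly the route the paper intends: the paper states this corollary without proof as an immediate consequence of the preceding proposition, and your argument (translation invariance of $d_C$ to reduce to differences $\mu - {\bf p} \in H_0$, then minimising over $\mu \in T$ and taking the supremum over ${\bf p}$) supplies precisely the routine details being omitted. No gaps.
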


In the following, we show Theorem \ref{bnreal_intro} and the covering radius conjecture for the case of dense multigraphs, i.e. $\kappa=1$. 

\begin{proposition}\label{bndense_intro}
Theorem \ref{bnreal_intro} and the covering radius conjecture (Conjecture \ref{geomver_conj}) hold for any dense multigraph. Hence, the Brill-Noether existence conjecture for $\mathbb{R}$-divisors (Conjecture \ref{bakconjreal}) also holds for any dense multigraph. 
\end{proposition}

%{\bf Proof of Theorem \ref{bndense_intro}:} 
\begin{proof}
We specialise Corollary \ref{covradlb_cor} to $T=\mathcal{N}_G$ and by Proposition \ref{covcritsimpcru_prop} and Proposition \ref{covradeq_prop}, we have ${\rm Cov}_{\triangle}(\mathcal{N}_G)={\rm Cov}_{\bar{\triangle}}(\mathcal{N}_G)=m/n$. Hence,  
\begin{center} ${\rm Cov}_{P_{\alpha,\bar{\alpha}}}(T) \geq m/(n({\rm min}(\bar{\alpha}(n-1)+\alpha,\alpha(n-1)+\bar{\alpha})))$.\end{center} 

Substituting $\alpha=1$ and $\bar{\alpha}=\lambda$, we obtain 

\begin{equation}\label{covradlb_eq}
{\rm Cov}_{P_{\alpha,\bar{\alpha}}}(T) \geq m/(n({\rm min}( \lambda (n-1)+1,(n-1)+\lambda))).
\end{equation}

Next, we show that the inequality $m/(n({\rm min}( \lambda (n-1)+1,(n-1)+\lambda))) \geq \sqrt{g}/(n\sqrt{\lambda})$ where $\lambda \in [1/g,g]$ holds for all dense multigraphs $G$.  We start by noting that this inequality is equivalent to $m/\sqrt{g} \geq {\rm min}(\sqrt{\lambda}(n-1)+1/\sqrt{\lambda},(n-1)/\sqrt{\lambda}+\sqrt{\lambda})$.  Since both sides of this inequality are fixed by the substitution $\lambda \rightarrow 1/\lambda$. It suffices to show the inequality for $\lambda \in [1,g]$. In this case, we show that 
$m/\sqrt{g} \geq (n-1)/\sqrt{\lambda}+\sqrt{\lambda}$. Plugging, $m=g+n-1$ we obtain: $\sqrt{g}+(n-1)/\sqrt{g} \geq  (n-1)/\sqrt{\lambda}+\sqrt{\lambda}$ which is equivalent to  $\sqrt{g}-\sqrt{\lambda} \geq (n-1)(1/\sqrt{\lambda}-1\sqrt{g})$. Using the fact that $\lambda \leq g$ (and hence, $\sqrt{\lambda} \leq \sqrt{g}$), this in turn is equivalent to $\sqrt{g \lambda} \geq n-1$. This holds since $G$ is dense (and hence, $g\geq (n-1)^2$) and $\lambda \geq 1$.  This shows that the covering radius conjecture  for dense multigraphs. 
By its equivalence with Conjecture \ref{bakconjreal},  we conclude that Conjecture \ref{bakconjreal} also holds for dense multigraphs. 
We finally note that  $\kappa=1$ for any dense multigraph and hence,  Theorem \ref{bnreal_intro} also holds\footnote{We thank the anonymous referee for suggesting this argument.}.\end{proof}

%We then note that graphs satisfying the density condition in Theorem \ref{bndense_theo} we have $m/n^2 \geq \sqrt{g}/n$. On the other hand, if $\lambda \geq 1$, {\color{blue} UNDER CONSTRUCTION}.

Note that by Proposition \ref{covradeq_prop} we have also shown Conjecture \ref{geomver_conj} for ${\rm Crit}_{\triangle}(L_G)$ and ${\rm Crit}_{\bar{\triangle}}(L_G)$.   In the following, we go over the proof technique of  Proposition \ref{bndense_intro} for multiples of complete graphs.

\begin{example}
\rm Consider multigraphs of the form $\beta \cdot K_n$ for $\beta \geq 2$.  As mentioned in Example \ref{multikn_ex}, they are dense. By Proposition \ref{covcritsimp_prop}, we know that the covering radius of ${\rm Crit}_{\triangle}(L_G)$ with respect to $\triangle$ (and $\bar{\triangle}$) is attained at the origin since the canonical divisor $K_{\beta \cdot K_n}$ of $\beta \cdot K_n$ is $(\beta(n-1)-2,\dots,\beta(n-1)-2)$ (where the canonical divisor is regarded as a point in $\mathbb{Z}^n$)  and hence, $\pi_0(K_{\beta \cdot K_n})$ is the origin.  We then used norm conversion inequalities to obtain the lower bound $\sqrt{g/\lambda}/n$ on the covering radius of ${\rm Crit}_{\triangle}(L_G)$  with respect to $P_{1,\lambda}$.  We believe that the covering radius of  ${\rm Crit}_{\triangle}(L_G)$ with respect to  $P_{1,\lambda}$ is also attained at the origin.  \qed %we elaborate on this in Section \ref{futdir_sect}. 

\end{example}

 \begin{remark} \rm  For dense multigraphs satisfying the stronger condition  $g \geq n^2$, Conjecture \ref{bakconjreal_intro}  can also be proven as follows.  Let $(r_0,d_0)$ be a pair of non-negative real numbers such that $\rho(g,r_0,d_0) \geq 0$.   By Remark \ref{redreal_rem}, we assume that $d_0 \leq g-1$. Consider the $\mathbb{R}$-divisor  $D_{\mathbb{R}}=r_0 \cdot (1,\dots,1)$. Note $D_{\mathbb{R}}$ has rank at least $r_0$ and degree $nr_0$ (see Item 1, Remark \ref{ranksub_rem} of Appendix \ref{rrreal_sect}). On the other hand, since $\rho(g,r_0,d_0) \geq 0$, we have $d_0 \geq g-g/(r_0+1)+r_0$. Furthermore, $d_0 \leq g-1$ and $\rho(g,r_0,d_0) \geq 0$ implies that  $r_0+1 \leq \sqrt{g}$. Combining the two inequalities, yields $d_0 \geq r_0 \sqrt{g}$.  By assumption, $g \geq n^2$. Hence, $\sqrt{g} \geq n$ and  $d_0 \geq  r_0 n$. Subtracting a suitable non-negative real multiple of $(1,0,\dots,0)$ yields Conjecture \ref{bakconjreal_intro}.   If $r_0$ and $d_0$ are non-negative integers, a similar argument also shows Conjecture \ref{bakconjv1} for dense multigraphs\footnote{We thank the anonymous referee for this remark.}. 

%{\color{red}11:57 AM, 2 August, 2020: Check the calculations below}

% {\color{red} 25 October 2020: Change made}
However, even in this case the lower bound for the rank of a divisor of fixed degree $r_0n$ obtained in the proof of Proposition \ref{bndense_intro} via norm conversion inequalities generalises and improves the lower bound $r_0$ obtained by the above method.
To see this, we set $\lambda_0:=(g-r_0n+r_0)/(r_0+1)=m/(r_0+1)-(n-1)$ and apply Inequality (\ref{covradlb_eq}) to obtain 
\begin{center}

${\rm Cov}_{P_{1, \lambda_0}}(\mathcal{N}_G) \geq \dfrac{m}{n({\rm min}(\dfrac{m(n-1)}{(r_0+1)}-(n-1)^2+1,\dfrac{m}{(r_0+1)}))}$.

\end{center}

An argument similar to the proof of Conjecture \ref{geomver_conj} implies Conjecture \ref{bakconjreal} implies that there exists an $\mathbb{R}$-divisor of degree $nr_0$ and rank at least \begin{center} $\dfrac{m}{{\rm min}(\dfrac{m(n-1)}{(r_0+1)}-(n-1)^2+1,\dfrac{m}{(r_0+1)})}-1 \geq r_0$. \end{center}

 This lower bound is strictly greater than $r_0$ if and only if the minimum in the denominator is attained only in the first term. This in turn is equivalent to $m<n \cdot (r_0+1)$. Asymptotically in $n$, this happens, for instance, if $m \in O(n^k)$ and $r_0 \in \Omega(n^{k-1+\epsilon})$ for some $k \geq 1$ and $\epsilon>0$. \qed
\end{remark}

%Note that complete graphs do not quite satisfy the density condition of Theorem \ref{bndense_theo}.  We show a slightly weaker statement:

%{\color{blue} clarify notation in the following Corollary?}

Recall that the \emph{stretch factor} $\kappa$ of a graph with $n$ vertices and $m$ edges is defined to be the maximum of $\lceil (n(n-1)/m \rceil$ and one.  We obtain the following weak version of the covering radius conjecture for arbitrary undirected, connected multigraphs.

%{\color{red}11:22 AM, 24 July, 2020: Division notation changed to dfrac in the proposition below}

\begin{corollary}\label{covlow_cor} {\rm ({\bf Lower Bound on the Covering Radius})}
Let $G$ be an undirected, connected multigraph on $n \geq 2$ vertices and of genus $g \geq 1$. Let $\lambda \in [1/g,g]$. The covering radius of ${\rm Crit}_{\triangle}(L_G)$ with respect to $P_{1,\lambda}$ is at least $\dfrac{(\sqrt{g/\kappa\lambda})}{n}$.
\end{corollary}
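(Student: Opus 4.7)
The plan is to deduce Corollary \ref{covlow_cor} from the dense case (Theorem \ref{bndense_intro}) via a scaling argument using the thickened multigraph $\Gamma \cdot G$. By the very definition of the stretch factor, $\Gamma \cdot G$ has $\Gamma m \geq n^2 + n - 1$ edges, so $\Gamma \cdot G$ is dense. The equivalence between Theorem \ref{bndense_intro} and Conjecture \ref{geomver_conj} established above, together with Proposition \ref{covradeq_prop}, then gives, for any $\lambda \in [1/g(\Gamma G), g(\Gamma G)]$,
\[
{\rm Cov}_{P_{1,\lambda}}({\rm Crit}_{\triangle}(L_{\Gamma G})) \geq \sqrt{g(\Gamma G)/\lambda}/n.
\]
Since $g(\Gamma G) = \Gamma m - n + 1 \geq m - n + 1 = g(G)$, the allowed range of $\lambda$ in this bound already contains $[1/g(G), g(G)]$, so no restriction on $\lambda$ is lost.

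Next I would exploit the identity $L_{\Gamma G} = \Gamma \cdot L_G$, which is immediate from the linear dependence of the Laplacian matrix on the edge multiplicities. Combined with the scaling equality $d_C(\Gamma p, \Gamma q) = \Gamma \, d_C(p, q)$ valid for any convex body $C$ (in particular for $C = \triangle$ and $C = P_{1,\lambda}$), this yields two formal scaling relations. First, $h_{\triangle, L_{\Gamma G}}(p) = \Gamma \cdot h_{\triangle, L_G}(p/\Gamma)$, so local maxima transform as ${\rm Crit}_{\triangle}(L_{\Gamma G}) = \Gamma \cdot {\rm Crit}_{\triangle}(L_G)$. Second, for any $L_G$-stable discrete set $T \subset H_0$, one has ${\rm Cov}_{P_{1,\lambda}}(\Gamma T) = \Gamma \cdot {\rm Cov}_{P_{1,\lambda}}(T)$, since $h_{P_{1,\lambda}, \Gamma T}(\Gamma q) = \Gamma \cdot h_{P_{1,\lambda}, T}(q)$. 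Substituting both identities into the bound above and dividing by $\Gamma$ yields
\[
{\rm Cov}_{P_{1,\lambda}}({\rm Crit}_{\triangle}(L_G)) \geq \frac{1}{\Gamma n}\sqrt{g(\Gamma G)/\lambda}.
\]

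To close, I would apply the elementary estimate $g(\Gamma G) \geq \Gamma \cdot g(G)$, which follows from
\[
g(\Gamma G) - \Gamma \cdot g(G) = (\Gamma m - n + 1) - \Gamma(m - n + 1) = (\Gamma - 1)(n - 1) \geq 0,
\]
giving the stated lower bound $\sqrt{g(G)/(\Gamma \lambda)}/n$. The only genuinely new content of the corollary is the reduction to the dense case supplied by Theorem \ref{bndense_intro}; the remainder is a bookkeeping exercise in the homogeneity of polyhedral distances. The main point requiring care is the simultaneous tracking of how $\Gamma$ scales the lattice, the critical set, and the covering radius, and verifying that the $\sqrt{\Gamma}$ loss in the final bound is exactly what emerges from the linear scaling of $d_{P_{1,\lambda}}$ against the square root appearing in the dense-graph covering radius estimate.
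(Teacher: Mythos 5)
Your proposal is correct and follows essentially the same route as the paper: reduce to the dense multigraph $\Gamma \cdot G$ via Theorem \ref{bndense_intro}, use $L_{\Gamma \cdot G} = \Gamma \cdot L_G$ and the homogeneity of the polyhedral distance to get ${\rm Crit}_{\triangle}(L_{\Gamma \cdot G}) = \Gamma \cdot {\rm Crit}_{\triangle}(L_G)$ and the factor-$\Gamma$ scaling of the covering radius, and finish with $g(\Gamma \cdot G) \geq \Gamma \cdot g(G)$. Your explicit verification $(\Gamma-1)(n-1) \geq 0$ and the remark that $[1/g(G), g(G)] \subseteq [1/g(\Gamma \cdot G), g(\Gamma \cdot G)]$ are exactly the bookkeeping steps the paper performs.
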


\begin{proof} By the definition of stretch factor, the multigraph $\kappa \cdot G$ is dense and hence, by Proposition  \ref{bndense_intro} (its covering radius version) the covering radius of ${\rm Crit}_{\triangle}( L_{\kappa \cdot G})$ with respect to $P_{1,\lambda}$ is at least $\dfrac{(\sqrt{g(\kappa \cdot G)/\lambda})}{n}$, where $g(\kappa \cdot G)$  is the genus of $\kappa \cdot G$. By the definition of the Laplacian lattice, we have $L_{\kappa \cdot G}=\kappa \cdot L_{G}$ and hence, $h_{\triangle,L_{\kappa \cdot G}}(\kappa \cdot {\bf p})=\kappa \cdot h_{\triangle,L_G}({\bf p})$ for all ${\bf p} \in H_0$. Hence, using the description of ${\rm Crit}_{\triangle}(L_{\kappa \cdot G})$ and ${\rm Crit}_{\triangle}(L_{G})$  as local maxima of $h_{\triangle,L_{\kappa \cdot G}}$ and $h_{\triangle,L_G}$ respectively, we have ${\rm Crit}_{\triangle}(L_{\kappa \cdot G})=\kappa \cdot {\rm Crit}_{\triangle}(L_{G})$.  For $\lambda \in [1/g(\kappa \cdot G), g(\kappa \cdot G)]$ we have ${\rm Cov}_{P_{1,\lambda}}({\rm Crit}_{\triangle}(L_{\kappa \cdot G}))=\kappa \cdot {\rm Cov}_{P_{1,\lambda}}({\rm Crit}_{\triangle}(L_{G})) \geq \dfrac{\sqrt{g(\kappa \cdot G)/\lambda}}{n}$. This gives \begin{center} ${\rm Cov}_{P_{1,\lambda}}({\rm Crit}_{\triangle}(L_{G}))  \geq \dfrac{\sqrt{g(\kappa \cdot G)/\lambda}}{(\kappa \cdot n)}\geq  \dfrac{(\sqrt{g/\kappa\lambda})}{n}$.\end{center} The last inequality uses the observation that $g(\kappa \cdot G) \geq \kappa \cdot g$. Since $g(\kappa \cdot G) \geq g$, we have $[1/g,g] \subseteq [1/g(\kappa \cdot G),g(\kappa \cdot G)]$. \end{proof}

\begin{example} \rm
The stretch factor of the Petersen graph $K_{5,2}$ is six, $g=6$ and $n=10$. By Corollary \ref{covlow_cor}, its covering radius with respect to $P_{1,\lambda}$ is at least $\sqrt{1/ \lambda}/10$.  On the other hand, the covering radius conjecture predicts a lower bound of $\sqrt{6/ \lambda}/10$. \qed
\end{example}

%\begin{corollary}\label{compbe_cor}  Let $\lambda \in [1/g(K_n),g(K_n)]$. The covering radius of ${\rm Crit}_{\triangle}(L_{K_n})$ with respect to $P_{1,\lambda}$ is at least $(\sqrt{g(K_n)/3\lambda})/n$ for $n \geq 5$. \end{corollary}

Translating Corollary \ref{covlow_cor} into the language of the existence conjecture for $\mathbb{R}$-divisors gives us Theorem \ref{bnreal_intro}. The proof for Corollary \ref{covlow_cor} implies Theorem \ref{bnreal_intro} is similar to the proof that the covering radius conjecture (Conjecture \ref{geomver_conj}) implies the existence conjecture for $\mathbb{R}$-divisors  (Conjecture \ref{bakconjreal}).    In the following, we provide the details for the sake of completeness.

{\bf Proof of Theorem \ref{bnreal_intro}:}  Suppose that non-negative integers $r_0,~d_0$ satisfy $\tilde{\rho}(g,r_0,d_0) \geq 0$. Also, suppose that $d_0 \leq g-1$. Let $\lambda=(g-d_0+r_0)/(r_0+1)$. Since $d_0 \leq g-1$, we have $\lambda \geq 1$ and since $\tilde{\rho}(g,r_0,d_0) \geq 0,~\kappa \geq 1$ and $r_0 \geq 0$, we have $\lambda \leq g/(\kappa(r_0+1)^2) \leq g$. By Corollary  \ref{covlow_cor}, the covering radius of ${\rm Crit}_{\triangle}(L_G)$ with respect to $P_{1,\lambda}$ is at least $\dfrac{(\sqrt{g/\kappa\lambda})}{n}$. Hence, there exists a point ${\bf p} \in H_{g-1}$ such that $h_{P_{1,\lambda},\mathcal{N}_G}({\bf p}) \geq \dfrac{(\sqrt{g/\kappa\lambda})}{n}$. Note that $0 \leq (r_0+1)/n  \leq \dfrac{(\sqrt{g/\kappa\lambda})}{n}$.  Furthermore, $h_{P_{1,\lambda},\mathcal{N}_G}: H_{g-1} \rightarrow \mathbb{R}$ is a continuous function (Remark \ref{hct_rem}) and attains a zero. By the intermediate value theorem for continuous functions, there is a point ${\bf q} \in H_{g-1}$ such that $h_{P_{1,\lambda},\mathcal{N}_G}({\bf q})=(r_0+1)/n$. We lift it to an $\mathbb{R}$-divisor with prescribed degree and rank as follows:  consider the $\mathbb{R}$-divisor $D_{\mathbb{R}}={\bf q}-(g-1-d_0)/n \cdot (1,\dots,1)$. The degree of $D_{\mathbb{R}}$ is equal to $d_0$ and   by Proposition \ref{geomrank_prop}, it has rank $r_0$.  The case $d_0>g-1$ follows from Remark \ref{redreal_rem}.\qed

For instance, the stretch factor of the complete graph is equal to two and hence, Theorem \ref{bnreal_intro} specialises to the following statement.

\begin{corollary}
Consider any integer $n \geq 2$, and non-negative reals $r$ and $0 \leq d \leq 2 \binom{n-1}{2}-2$. If $\binom{n-1}{2}-2(r+1)(\binom{n-1}{2}-d+r) \geq 0$, then there exists an  $\mathbb{R}$-divisor of degree at most $d$ and rank equal to $r$ on $K_n$.
\end{corollary}

Next, we derive an upper bound on the gonality sequence of dense graphs. For an integer $k \geq 1$, the \emph{$k$-th $\mathbb{R}$-gonality} $\gamma_{k,\mathbb{R}}(G)$ is defined as the infimum over the degree of all $\mathbb{R}$-divisors of rank at least $k$.

%\begin{theorem}\label{bnrealkn_theo}

%Let $n \geq 5$. For non-negative reals $r$ and $2g(K_n)-2 \geq d \geq 0$, if $\tilde{\rho}(r,d)=g(K_n)-3(r+1)(g(K_n)-d+r) \geq 0$ then there exists an  $\mathbb{R}$-divisor of degree $d$ and rank $r$ on $K_n$.

%\end{theorem}

%\begin{theorem}\label{bnreal_theo}
%For any undirected, connected, multigraph $G$ on $n$-vertices, genus $g$ and stretch factor $\kappa$, for non-negative reals $2g-2 \geq d \geq 0$ and $r$, if $\tilde{\rho}(g,r,d)=g-\kappa(r+1)(g-d+r) \geq 0$ then there exists a  $\mathbb{R}$-divisor of degree $d$ and rank $r$ on $G$.
%\end{theorem}

\begin{corollary}\label{realgon_cor}
 If $G$ has stretch factor $\kappa$ then $\gamma_{k,\mathbb{R}}(G)  \leq \lceil g(\kappa(k+1)-1)/(\kappa(k+1))+k  \rceil$ for all integers $k \geq 1$.
\end{corollary}

\begin{proof}
Consider the pair $(r_0,d_0)=(k,\lceil g(\kappa(k+1)-1)/(\kappa(k+1))+k \rceil)$ and note that $\tilde{\rho}(g,r_0,d_0) \geq 0$.  If  $k \leq g-2$, then $d_0 \leq 2g-2$ and Theorem \ref{bnreal_intro} implies the claim. Otherwise, if $k \geq g-1$ then 
Riemann-Roch provides an upper bound of $g-1+k$ on $\gamma_{k,\mathbb{R}}(G)$  and  $g-1+k \leq  \lceil g(\kappa(k+1)-1)/(\kappa(k+1))+k  \rceil$.  \end{proof}

 %Theorem \ref{bnreal_intro} and Remark \ref{realhighdeg_rem} implies that there exists a divisor of degree at most $d_0$ and rank $k$. 

\begin{remark}
\rm The ceiling in the upper bound in Corollary \ref{realgon_cor} can be removed for certain ``small'' values of $k$ (if $g(\kappa(k+1)-1)/(\kappa(k+1))+k \leq 2g-2$). With Remark \ref{realhighdeg_rem} in mind, we stated an upper bound that is valid for all integers $k \geq 1$. \qed
\end{remark}

\section{Brill-Noether Existence for Divisors on $G$}

In this section, we study the existence conjecture for graphs (Conjecture \ref{bakconjv1}) using techniques similar to the ones in Subsection \ref{bndenseproof_subsect}.

\subsection{An Approximate Version of  Brill-Noether Existence}\label{round_subsect}

%{\color{red} 12:01 PM, 26 July, 2020: Go over the change to Theorem \ref{degplusrank_theo} and to Appendix \ref{rrreal_sect} at least twice.}
We use Theorem \ref{bnreal_intro} to prove Theorem \ref{bnapprox_intro}.  The main idea of the proof is to ``round-off'' a divisor produced by applying Theorem \ref{bnreal_intro}.  %We are ready to present the proof of Theorem \ref{bnapprox_intro}.
%{\color{blue}
%\begin{theorem}\label{bnapprox_theo}
%Let $G$ be an undirected connected multigraph with $n \geq 2$ vertices, with stretch factor $\kappa$ and genus $g$.  For any pair $(r_0,d_0)$ of integers with $2g-2 \geq d_0 \geq 0$ satisfying $\tilde{\rho}(g,r_0,d_0)=g-\kappa(r_0+1)(g-d_0+r_0) \geq 0$ there exists a divisor $D$ on $G$ with degree $d_0$ and rank $r$ satisfying $|r-r_0| \leq 2n-2$.
%\end{theorem}}

{\bf Proof of Theorem \ref{bnapprox_intro}:} Suppose that $(r_0,d_0)$ is a pair of integers  (with $0 \leq  d_0 \leq 2g-2$) satisfying $\tilde{\rho}(g,r_0,d_0) \geq 0$.  We apply Theorem \ref{bnreal_intro} to $(r_0,d_0)$ to obtain an $\mathbb{R}$-divisor $D_{\mathbb{R}}=\sum_v d_v (v)$ on $G$ with degree $d'_0$ at most $d_0$ and rank $r_0$. 
We round-off the $\mathbb{R}$-divisor $D_{\mathbb{R}}$ to a divisor with the prescribed properties as follows.  Fix any vertex $v_0$ of $G$ and define a divisor $D=\sum_{v \neq v_0} [d_v](v)+(d'_0-\sum_{v \neq v_0}[d_v])(v_0)$ where $[.]$ is a nearest integer function. By construction, the divisor $D$ has degree $d'_0$.  

Let $E=\sum_{v \neq v_0} (v)+(n-1)(v_0)$. Note that $D_{\mathbb{R}}-\nu+E \geq D-\nu$ and $D-\nu+E \geq D_{\mathbb{R}}-\nu$ for any $\nu \in \mathcal{N}_G$.  Furthermore, note that ${\rm deg^{+}}(D'+E') \leq {\rm deg}^{+}(D')+{\rm deg}(E')$ for any divisor $D'$ and any effective divisor $E'$.   We apply this twice, first with $D'=D_{\mathbb{R}}-\nu$,~$E'=E$ and  second with $D'=D-\nu,~E'=E$ to obtain $|{\rm deg^{+}}(D_{\mathbb{R}}-\nu)-{\rm deg^{+}}(D-\nu)| \leq 2n-2$ for every $\nu \in \mathcal{N}_G$ (note that ${\rm deg}(E)=2n-2)$).   Hence,  ${\rm min}_{\nu \in \mathcal{N}_G}{\rm deg^{+}}(D_{\mathbb{R}}-\nu) \leq  {\rm min}_{\nu \in \mathcal{N}_G}{\rm deg^{+}}(D-\nu)+2n-2$ and   ${\rm min}_{\nu \in \mathcal{N}_G}{\rm deg^{+}}(D-\nu) \leq  {\rm min}_{\nu \in \mathcal{N}_G}{\rm deg^{+}}(D_{\mathbb{R}}-\nu)+2n-2$.  From the degree plus formula for the rank of an $\mathbb{R}$-divisor (Proposition \ref{degplusrank_theo}), we obtain $|r_0-r(D)|=|r(D_{\mathbb{R}})-r(D)| \leq 2n-2$. \qed

\subsection{Integral Covering Radius Conjecture}\label{intcovrad_subsect}

In the following, we formulate the existence conjecture for $d=g-1$ in the terms of a variant of the covering radius called the \emph{integral covering radius} and prove it for sufficiently dense graphs. 

%Suppose that $\Lambda \subseteq \mathbb{Z}^n$. The integral covering radius {\color{blue} give reference} is the maximum value of $h_{C,T}$ restricted to the set of integer points $H_{d} \cap \mathbb{Z}^n$.The formal definition is as follows: 

\begin{definition}{\rm ({\bf Integral Covering Radius})}\label{intcovrad_def}
Let $\Lambda$ be a full rank  sublattice of $H_0 \cap \mathbb{Z}^n$.  Let $T$ be a subset of  $H_d$ carrying the action of $\Lambda$ via translation and with finitely many orbits. Let $C$ be a convex body in $H_0$.  The maximum value of the function $h_{C,T}| (H_{d} \cap \mathbb{Z}^n)$ ($h_{C,T}$ is as in Section \ref{covrad_sect}) is called the integral covering radius of $T$ with respect to $C$ and is denoted by ${\rm Covint}_C(T)$. 
\end{definition}

% Note that since $T \subseteq \mathbb{Z}^n$ and $\Lambda$ is a full rank lattice in $H_0 \cap \mathbb{Z}^n$,   the maximum in Definition \ref{intcovrad_def} is over a finite set (consisting of the orbits of the $\Lambda$-action on $T$) and hence, it exists.  

\begin{remark}
\rm
Unlike the covering radius, the integral covering radius of ${\rm Crit}_{\triangle}(L_G),~{\rm Crit}_{\bar{\triangle}}(L_G)$ and $\mathcal{N}_G$ are a priori different. However, we do not know of explicit examples where they are different.  \qed
\end{remark}

We are now ready to state a geometric reformulation of the existence conjecture in the case $d=g-1$. 

\begin{conjecture}\label{geomverell1_conj} {\rm {(\bf Integral Covering Radius Conjecture)}}
 Let $P_1$ be the unit ball of the $\ell_1$-norm in $H_0$.  The integral covering radius of $\mathcal{N}_G$ with respect to $P_1$ is at least $2\lfloor\sqrt{g} \rfloor$.
\end{conjecture}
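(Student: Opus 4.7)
The plan is to link the $P_1$-covering radius directly to ranks of divisors and then exhibit an explicit integer divisor of degree $g-1$ that is far in $P_1$-distance from $\mathcal{N}_G$, using the canonical divisor as a guide.

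The first observation is a clean identity: for any integer $D, \nu \in H_{g-1} \cap \mathbb{Z}^n$, the constraint $\sum_i(D_i - \nu_i) = 0$ forces $\|D - \nu\|_1 = 2\,{\rm deg}^{+}(D - \nu)$. Since $d_{P_1}(\nu, D) = \|D - \nu\|_1$ (because $P_1$ is the $\ell_1$-unit ball in $H_0$), the degree-plus formula for the rank (Theorem \ref{degplusrank_theo}) yields
\[
h_{P_1, \mathcal{N}_G}(D) \;=\; 2\,(r(D) + 1)
\]
for every integer $D$ of degree $g-1$. Thus the integral covering radius in question equals $2\bigl(1 + \max\{r(D) : D \in H_{g-1} \cap \mathbb{Z}^n\}\bigr)$, and Conjecture \ref{geomverell1_conj} becomes the statement that $G$ carries a divisor of degree $g-1$ and rank at least $\lfloor \sqrt{g} \rfloor - 1$, which is exactly Brill--Noether existence at $d = g-1$.

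To produce such a divisor I would round the canonical direction. By Propositions \ref{covcritsimp_prop} and \ref{covradeq_prop}, the function $h_{\triangle, \mathcal{N}_G}$ attains its maximum $m/n$ on $H_{g-1}$ at the coset $\pi_{g-1}(K_G) + L_G$. Writing $m = qn + r$ with $0 \le r < n$, the coordinates of $\pi_{g-1}(K_G)$ are ${\rm val}(v_i) - q - 1 - r/n$. Define an integer divisor $D$ by
\[
D_i \;=\; {\rm val}(v_i) - q - 1 - \epsilon_i,
\]
where the $\epsilon_i \in \{0,1\}$ are chosen so that $\sum_i \epsilon_i = r$, which forces $\deg D = g - 1$. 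Since $r < n$ at least one $\epsilon_i$ vanishes, so $d_\triangle(D, \pi_{g-1}(K_G)) = \max_i(r/n - \epsilon_i) = r/n$; the triangle inequality for $d_\triangle$ then gives the Lipschitz bound
\[
h_{\triangle, \mathcal{N}_G}(D) \;\ge\; \tfrac{m}{n} - \tfrac{r}{n} \;=\; \lfloor m/n \rfloor.
\]
Specialising the norm conversion in Corollary \ref{covradlb_cor} to $\alpha = \bar\alpha = 1/(2n)$, so that $P_{\alpha,\bar\alpha} = P_1$ and the conversion factor is $1/(1/2) = 2$, one obtains $h_{P_1, \mathcal{N}_G}(D) \ge 2 h_{\triangle, \mathcal{N}_G}(D) \ge 2\lfloor m/n \rfloor$. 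Under the density hypothesis $(2m/n - n/2)^2 \ge 4g$, equivalent to $m/n \ge \sqrt{g} + n/4$, one has $\lfloor m/n \rfloor \ge \lfloor \sqrt{g} \rfloor$ and the conjecture follows for dense graphs.

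The main obstacle is the sparse regime: when $m/n < \lfloor \sqrt{g} \rfloor$, as happens for $K_n$ with $n$ large, the bound $h_{\triangle, \mathcal{N}_G}(D) \le m/n$ is too weak and the norm-conversion step becomes lossy. Getting past this would seem to require either a candidate divisor adapted to the finer combinatorics of the graph rather than a naive rounding of $\pi_{g-1}(K_G)$, or using joint information about $h_{\triangle, \mathcal{N}_G}$ and $h_{\bar\triangle, \mathcal{N}_G}$ at integer points simultaneously. Since the conjecture is equivalent to Brill--Noether existence at $d = g-1$, this is precisely where any complete argument has to do genuine work.
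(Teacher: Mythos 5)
The statement you were asked to prove is a \emph{conjecture}: the paper does not prove it in general either, but only (i) shows it is equivalent to Brill--Noether existence at $d=g-1$ and (ii) establishes it for graphs satisfying the density condition $(2m/n-n/2)^2\geq 4g$ (Theorem \ref{bndenseeq_theo}, via Proposition \ref{covcritell1_prop} and Corollary \ref{intcov_cor}). Your proposal does exactly the same two things and is honest that the sparse regime is the genuinely open part, so there is no ``missing idea'' you could reasonably have supplied. Your reduction via $\|D-\nu\|_1=2\,{\rm deg}^{+}(D-\nu)$ for integer points of equal degree is precisely the paper's Corollary \ref{degplus_corr} and its equivalence argument. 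Where you differ is in the dense case: the paper takes a real maximizer $\mathbf{p}$ of $h_{P_1,\mathcal{N}_G}$ (worth $\geq 2m/n$ by norm conversion) and rounds it coordinatewise, losing at most $n/2$ in $\ell_1$; you instead round the explicit point $\pi_{g-1}(K_G)$ (where $h_{\triangle,\mathcal{N}_G}$ is maximized, by Propositions \ref{covcritsimp_prop} and \ref{covradeq_prop}), control the loss in the simplicial distance, and convert norms afterwards. Your rounding has the merit of manifestly preserving the degree (via $\sum_i\epsilon_i=r$), which the paper's coordinatewise nearest-integer rounding in Corollary \ref{intcov_cor} does not obviously do.

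One technical slip in your dense-case step: $d_\triangle$ is not symmetric, and the triangle inequality $d_\triangle(\mathbf{p},\nu)\leq d_\triangle(\mathbf{p},D)+d_\triangle(D,\nu)$ requires you to subtract $d_\triangle(\pi_{g-1}(K_G),D)$, not $d_\triangle(D,\pi_{g-1}(K_G))$. With the paper's formula $d_\triangle(\mathbf{p},\mathbf{q})=|\min_i(q_i-p_i)|$ and $D-\pi_{g-1}(K_G)$ having coordinates $r/n-\epsilon_i$, the relevant quantity is $|\min_i(r/n-\epsilon_i)|=1-r/n$ (when $r\geq 1$), not $r/n$. So your Lipschitz bound should read $h_{\triangle,\mathcal{N}_G}(D)\geq m/n-1+r/n$ rather than $\lfloor m/n\rfloor$. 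This still yields $h_{P_1,\mathcal{N}_G}(D)\geq 2(m/n-1+r/n)$, which suffices under the density hypothesis $m/n\geq\sqrt{g}+n/4$ for $n\geq 4$ (and with a little extra care for $n=2,3$), so your dense-case conclusion survives, but the constant you claimed is not the one your argument delivers. Beyond that, your closing assessment is correct: any complete proof of the conjecture must do genuine work in the sparse regime, where both your bound and the paper's $2m/n-n/2$ fall below $2\lfloor\sqrt{g}\rfloor$.
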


 Note that the polytope $P_1$ is homothetic to $P_{1,1}=\triangle+\bar{\triangle}$, more precisely $P_{1,1}=2 n \cdot P_1$.  Furthermore,  since with respect to a fixed convex body $C$, the integral covering radius is at most the covering radius of $\mathcal{N}_G$, this implies that the covering radius of $\mathcal{N}_G$ with respect to $P_1$ is at least $2\lfloor\sqrt{g} \rfloor$ (this is consistent with the covering radius conjecture).  In the following, we note that Conjecture \ref{geomverell1_conj} is equivalent to the  existence conjecture for $d=g-1$. The following statement is a corollary to Proposition \ref{degplusrank_theo}

 %For this, we recall the following formula for the rank of a divisor from \cite{BakNor07}:

\begin{corollary}\label{degplus_corr}
If $D$ is a divisor of degree $g-1$, then $r(D)={\rm min}_{\nu \in \mathcal{N}_G} |D-\nu|_1/2-1$. 
\end{corollary}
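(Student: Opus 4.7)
The plan is to deduce this directly from Theorem \ref{degplusrank_theo} by exploiting the fact that $D-\nu$ has degree zero whenever $\deg(D)=g-1$ and $\nu\in\mathcal{N}_G$.

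First, I would write $D-\nu=\sum_v a_v(v)$ and split the coordinate sum into its positive and negative parts: set $S^+=\sum_{a_v>0}a_v=\deg^+(D-\nu)$ and $S^-=\sum_{a_v<0}(-a_v)$. Then trivially $|D-\nu|_1=\sum_v|a_v|=S^++S^-$, while $\deg(D-\nu)=S^+-S^-$.

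Next, since $\nu\in\mathcal{N}_G$ has degree $g-1$ and $D$ has degree $g-1$, the divisor $D-\nu$ has degree zero. Thus $S^+=S^-$, which forces $\deg^+(D-\nu)=S^+=(S^++S^-)/2=|D-\nu|_1/2$.

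Finally, substituting this identity into Theorem \ref{degplusrank_theo} yields
\[
r(D)=\min_{\nu\in\mathcal{N}_G}\deg^+(D-\nu)-1=\min_{\nu\in\mathcal{N}_G}|D-\nu|_1/2-1,
\]
as claimed. There is no real obstacle here: the corollary is a one-line consequence of Baker--Norine's formula once one observes that the degree-plus function coincides with half the $\ell_1$-norm on degree-zero divisors.
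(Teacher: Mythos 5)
Your argument is correct and is exactly the intended one: the paper presents this as an immediate corollary of Theorem \ref{degplusrank_theo} without spelling out a proof, and the only content is precisely your observation that $D-\nu$ has degree zero (since both $D$ and every $\nu\in\mathcal{N}_G$ have degree $g-1$), so that $\deg^{+}(D-\nu)=|D-\nu|_1/2$. Nothing is missing.
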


{\bf Proof of Equivalence of  the Integral Covering Radius Conjecture and the Existence Conjecture (Conjecture \ref{bakconjv1}) for $d=g-1$}:  ($\Rightarrow$)The integral covering radius of  $\mathcal{N}_G$ with respect to $P_1$ being at least $2\lfloor\sqrt{g} \rfloor$ implies that there exists an integer point (a point ${\bf p} \in H_{g-1} \cap \mathbb{Z}^n$) such that ${\rm min}_{\nu \in \mathcal{N}_G}|{\bf p}-\nu|_1 \geq 2\lfloor\sqrt{g} \rfloor$. By  Corollary \ref{degplus_corr}, the rank of ${\bf p}$ (considered as a divisor on $G$) is at least $\lfloor\sqrt{g} \rfloor-1$.  Hence, $G$ has a divisor of degree $g-1$ and rank at least $\lfloor \sqrt{g} \rfloor -1$. Next, note that the maximum integer $r$ for which $\rho(g,r,g-1) \geq 0$ is $\lfloor \sqrt{g} \rfloor -1$. 
Hence, given any non-negative integer $r$ such that $\rho(g,r,g-1) \geq 0$ there exists a divisor of degree $g-1$ and rank at least $r$ on $G$. By suitably subtracting an effective divisor from $G$,  we obtain a divisor of degree at most $g-1$ and rank $r$. 

($\Leftarrow$)
Suppose that the existence conjecture holds for $d=g-1$ then there is a divisor of degree $g-1$ and rank at least $\lfloor \sqrt{g} \rfloor -1$ (by suitably adding an effective divisor to a divisor given by Conjecture \ref{bakconjv1} for the pair $(r,d)=(\lfloor \sqrt{g} \rfloor -1,g-1)$). By Corollary \ref{degplus_corr}, the corresponding point ${\bf p}$ in  $H_{g-1} \cap \mathbb{Z}^n$ satisfies  ${\rm min}_{\nu \in \mathcal{N}_G}|{\bf p}-\nu|_1 \geq 2\lfloor\sqrt{g} \rfloor$. Hence, the integral covering radius conjecture holds.  \qed

%Since Conjecture  \ref{bakconjv3} is equivalent to the other two versions of the conjecture for $d=g-1$, we conclude the equivalence of Conjecture \ref{geomverell1_conj} with these two (for $d=g-1$).

\subsection{Proof of Theorem \ref{bnKn_theo}}

%We now use Proposition \ref{covcritsimp_prop} and a distance conversion inequality to obtain a lower bound on the covering radius of ${\rm Crit}_{\triangle}(L_G)$ with respect to the unit ball of the $\ell_1$-norm. 

 Using the formula for the covering radius of ${\rm Crit}_{\triangle}(L_G)$ and $\mathcal{N}_G$ with respect to $\triangle$ and $\bar{\triangle}$ along with the norm conversion inequality (Corollary \ref{covradlb_cor}) and the observation that $P_{1,1}=2 n \cdot P_1$, we obtain the following proposition.

\begin{proposition}\label{covcritell1_prop} The covering radius of ${\rm Crit}_{\triangle}(L_G)$ and $\mathcal{N}_G$ with respect to $P_1$ are both at least $2m/n$, where $m$ is the number of edges and $n$ is the number of vertices of $G$. \end{proposition}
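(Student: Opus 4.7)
The plan is to chain together two facts already established in the paper: the simplicial covering radii computed in Proposition \ref{covcritsimpcru_prop}, together with the norm conversion inequality of Corollary \ref{covradlb_cor}, and then to exploit the homothety $P_{1,1}=2n\cdot P_1$ to pass from $P_{1,1}$ to the unit $\ell_1$-ball $P_1$.

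First I would reduce to a statement about a single set. By Proposition \ref{covradeq_prop}, the covering radii of ${\rm Crit}_{\triangle}(L_G)$, ${\rm Crit}_{\bar{\triangle}}(L_G)$ and $\mathcal{N}_G$ with respect to any fixed convex body (in particular $P_1$) coincide, so it suffices to bound ${\rm Cov}_{P_1}({\rm Crit}_{\triangle}(L_G))$ from below.

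Next I would apply the norm conversion inequality (Corollary \ref{covradlb_cor}) with $T={\rm Crit}_{\triangle}(L_G)$ and parameters $\alpha=\bar{\alpha}=1$. In this case $\bar{\alpha}(n-1)+\alpha = \alpha(n-1)+\bar{\alpha} = n$, and by Proposition \ref{covcritsimpcru_prop} both ${\rm Cov}_{\triangle}(T)$ and ${\rm Cov}_{\bar{\triangle}}(T)$ equal $m/n$. Thus
\[
{\rm Cov}_{P_{1,1}}({\rm Crit}_{\triangle}(L_G)) \;\geq\; \frac{m/n}{n} \;=\; \frac{m}{n^{2}}.
\]

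Finally I would convert this into a bound with respect to $P_1$. Since $P_{1,1}=2n\cdot P_1$, the distance functions satisfy $d_{P_{1,1}}(\mathbf{p},\mathbf{q})=d_{P_1}(\mathbf{p},\mathbf{q})/(2n)$ for all $\mathbf{p},\mathbf{q}\in H_0$, and the same scaling holds for $h_{P_{1,1},T}$ and $h_{P_1,T}$. Therefore ${\rm Cov}_{P_1}(T)=2n\cdot {\rm Cov}_{P_{1,1}}(T)\geq 2n\cdot(m/n^2)=2m/n$, which together with the equality from Proposition \ref{covradeq_prop} gives the desired lower bound for both ${\rm Crit}_{\triangle}(L_G)$ and $\mathcal{N}_G$. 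There is no substantial obstacle here; the only point requiring minor care is verifying the scaling identity $d_{P_{1,1}}=d_{P_1}/(2n)$ directly from the definition $d_C(\mathbf{p},\mathbf{q})=\min\{\mu:\mathbf{q}\in\mathbf{p}+\mu\cdot C\}$.
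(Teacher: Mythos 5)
Your proof is correct and is exactly the argument the paper intends: it chains Proposition \ref{covcritsimpcru_prop} (and Proposition \ref{covradeq_prop}) through the norm conversion inequality of Corollary \ref{covradlb_cor} with $\alpha=\bar{\alpha}=1$, then rescales via $P_{1,1}=2n\cdot P_1$. The paper gives only this one-line indication of proof, and your write-up supplies precisely the missing details, including the correct scaling identity $d_{P_{1,1}}=d_{P_1}/(2n)$.
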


%\begin{proof}
%For any two points ${\bf p}=(p_1,\dots,p_n),~(q_1,\dots,q_n) \in \mathbb{R}^n$, the distance $d_{\triangle}({\bf p},{\bf q})$ is given by $|{\rm min}(p_1-q_1,\dots,p_n-q_n)|$ and its $\ell_1$-distance  $|{\bf p}|_1$ is given by $\sum_i |p_i-q_i|$. Hence, $d_{\triangle}({\bf p},{\bf q}) \leq |{\bf p}-{\bf q}|_1$. By definition, this implies that $h_{\triangle,{\rm Crit}_{\triangle}(L_G)}({\bf p}) \leq h_{P_1,{\rm Crit}_{\triangle}(L_G)}({\bf p)}$ for every ${\bf p}$ in $H_0$. We then invoke Proposition \ref{covcritsimp_prop} to complete the proof. An analogous argument also holds for ${\rm Crit}_{\triangle}(L_G)$.
%\end{proof}

\begin{corollary}\label{intcov_cor} The integral covering radius of $\mathcal{N}_G$ with respect to $P_1$ is at least $2m/n-n/2$.\end{corollary}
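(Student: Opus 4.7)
The plan is to deduce the integral covering radius bound from Proposition \ref{covcritell1_prop} by a rounding argument. Since $P_1$ is the $\ell_1$-unit ball in $H_0$, the distance function it induces is simply $d_{P_1}({\bf p},{\bf q}) = \|{\bf p}-{\bf q}\|_1$ for ${\bf p},{\bf q} \in H_{g-1}$ (their difference lies in $H_0$). So the statement to prove is that some integer point of $H_{g-1}$ sits at $\ell_1$-distance at least $2m/n - n/2$ from every non-special divisor.

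First, I would invoke Proposition \ref{covcritell1_prop} together with Remark \ref{hct_rem} (which guarantees that $h_{P_1,\mathcal{N}_G}$ attains its maximum on the torus $H_{g-1}/L_G$) to select a real point ${\bf p}^* \in H_{g-1}$ with $\min_{\nu \in \mathcal{N}_G}\|{\bf p}^* - \nu\|_1 \geq 2m/n$. The problem then reduces to producing an integer point ${\bf q}^* \in H_{g-1} \cap \mathbb{Z}^n$ that is $\ell_1$-close to ${\bf p}^*$: if $\|{\bf p}^* - {\bf q}^*\|_1 \leq n/2$, then the triangle inequality immediately gives $\|{\bf q}^* - \nu\|_1 \geq 2m/n - n/2$ for every $\nu \in \mathcal{N}_G$, which is exactly the desired bound.

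The core step is therefore a constrained rounding lemma: for any ${\bf p} \in H_{g-1}$ there exists ${\bf q} \in H_{g-1} \cap \mathbb{Z}^n$ with $\|{\bf p}-{\bf q}\|_1 \leq n/2$. Writing $p_i = \lfloor p_i \rfloor + f_i$ with $f_i \in [0,1)$, the quantity $k := \sum_i f_i$ is a nonnegative integer at most $n$ (because $\sum_i p_i = g-1 \in \mathbb{Z}$). I would round up exactly the $k$ coordinates with the largest fractional parts and round the others down, which preserves the total sum and yields $\|{\bf p}-{\bf q}\|_1 = 2\bigl(k - \sum_{i \in S} f_i\bigr)$. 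Since the top $k$ fractional parts have sum at least $k \cdot (k/n) = k^2/n$ (their average dominates the global average), this error is at most $2k(n-k)/n$, whose maximum over $k \in \{0,1,\dots,n\}$ is $n/2$.

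I do not anticipate a serious obstacle. The geometric content — the lower bound $2m/n$ on the real covering radius of $\mathcal{N}_G$ with respect to $P_1$ — is already supplied by Proposition \ref{covcritell1_prop}, and the only remaining work is the sum-preserving $\ell_1$-rounding argument above. The worst case $n/2$ in that rounding is exactly the deficit appearing in the statement, so the argument is tight up to the constant we lose.
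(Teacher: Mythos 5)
Your proof is correct and follows the same overall strategy as the paper: take a real point realizing the covering-radius bound $2m/n$ from Proposition \ref{covcritell1_prop}, round it to an integer point at $\ell_1$-distance at most $n/2$, and conclude by the triangle inequality. The one place where you genuinely improve on the paper's argument is the rounding step. The paper simply takes the coordinatewise nearest-integer point $[{\bf p}]$, which does satisfy $|{\bf p}-[{\bf p}]|_1 \leq n/2$, but need not have coordinate sum $g-1$; since the integral covering radius is by Definition \ref{intcovrad_def} a maximum over $H_{g-1} \cap \mathbb{Z}^n$ only, the rounded point must actually lie on that hyperplane, and the paper does not address this. Your constrained rounding --- rounding up exactly the $k=\sum_i f_i$ coordinates with largest fractional parts --- preserves the degree and still achieves error $2k(n-k)/n \leq n/2$, so it closes this small gap at no cost in the constant. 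The rest of your argument (identifying $d_{P_1}$ with the $\ell_1$-distance on $H_{g-1}$, and the existence of a maximizer via Remark \ref{hct_rem}) matches the paper exactly.
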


\begin{proof} Suppose that $h_{P_1,\mathcal{N}_G}$ attains a maximum at a point ${\bf p}=(p_1,\dots,p_n) \in H_{g-1}$.  Consider $[{\bf p}]=([p_1],\dots,[p_n])  \in \mathbb{Z}^n$ where $[.]$ is a nearest integer function. By the triangle inequality, $|[{\bf p}]-\nu|_1 \geq |{\bf p}- \nu |_1-|{\bf p}-[{\bf p}]|_1 \geq |{\bf p}- \nu |_1-n/2$, for every $\nu \in \mathcal{N}_G$. Hence, by Proposition \ref{covcritell1_prop} we obtain $h_{P_1,\mathcal{N}_G}([{\bf p}]) \geq 2m/n-n/2$. \end{proof}

%In the following, we complete the proof of Theorem \ref{bndenseeq_theo} and Theorem \ref{bnKn_theo}.\\

%{\bf Proof of Theorem \ref{bndenseeq_theo}:} By the equivalence of the existence conjecture for $d=g-1$ and the integral covering radius conjecture, it suffices to show that the integral covering radius of  $\mathcal{N}_G$  with respect to $P_1$ is at least $2\lfloor\sqrt{g} \rfloor$. By Corollary \ref{intcov_cor}, this holds for graphs that satisfy $2m/n-n/2 \geq 2\lfloor\sqrt{g} \rfloor$. Squaring both sides, we obtain the density condition in Theorem \ref{bndenseeq_theo}. \qed

%Complete graphs do not satisfy the density condition in Theorem \ref{bndenseeq_theo}, for them Proposition \ref{intcov_cor} gives a trivial lower bound on the integral covering radius of $\mathcal{N}_G$. 

{\bf Proof of Theorem \ref{bnKn_theo}:}   Since $m=\binom{n}{2}$, Corollary \ref{intcov_cor} implies that ${\rm Cov}_{P_1}(\mathcal{N}_G) \geq (n-2)/2$. Using the fact that $g=\binom{n-1}{2}$ and $n \geq 3$, we obtain ${\rm Cov}_{P_1}(\mathcal{N}_G) \geq (n-2)/2 \geq \sqrt{g}/2$. We then apply Corollary \ref{degplus_corr} to complete the proof of the first part. 

% The first part follows immediately from Corollary \ref{intcov_cor}, Corollary \ref{degplus_corr} and the fact that $g(K_n)=\binom{n-1}{2}$. 

For the second part we note, using Proposition \ref{covradeq_prop} that $h_{\triangle,\mathcal{N}_G}$ attains its maximum over the coset $\pi_{g-1}(K_G)+L_G$. For the complete graph $K_n$, the canonical divisor $K_G=(n-3,,\dots,n-3)$ and $\pi_{g-1}(K_G)= \frac{(n-3)}{2} \cdot (1,\dots,1)$.  If $n$ is odd, $\pi_{g-1}(K_G) \in \mathbb{Z}^n$ and hence, the integral covering radius of $\mathcal{N}_G$ (with respect to $P_1$) is equal to its covering radius and is hence, lower bounded by $2m/n$. Plugging in $m=n(n-1)/2$ we obtain $2m/n=(n-1) \geq \sqrt{2}\sqrt{g}$. Combined with Corollary \ref{degplus_corr}, this completes the proof of Theorem \ref{bnKn_theo}.  \qed

%{\color {red} 12:08 PM, 27 July, 2020: LINE BELOW UNDER CONSTRUCTION}

%Note that for sparse graphs, for instance regular graphs of a fixed degree $d$, Theorem \ref{bndenseeq_theo} does not give a non-trivial bound.  %{\color{blue} We discuss this case in more detail in Section \ref{Del_sect}.}

\appendix
\section{Riemann-Roch Theory for $\mathbb{R}$-Divisors on a Graph}\label{rrreal_sect}

We elaborate on \cite[Section 8.1]{AmiMan10} and state a Riemann-Roch theorem for $\mathbb{R}$-divisors on a graph.  This is a special case, in a slightly different language, of the Riemann-Roch theorem for edge-weighted graphs due to James and Miranda \cite{JamMir13} with each edge weight equal to the number of multiedges between the corresponding pair of vertices.  

 Let $G$ be an undirected, connected multigraph, with no loops, having $n$ vertices, $m$ edges and genus $g$.  Following \cite{AmiMan10}, we define the sigma region of $G$ as follows:
 
 %{\color{blue} 1PM, 4 August, 2021: Notation $\Sigma^c(G)$ changed to $\tilde{\Sigma}^c(G)$ check again, also change notation for ${\rm Ext}^c(G)$?}
 \begin{definition}
 The sigma region $\tilde{\Sigma}^c(G)$ is the closure (under the Euclidean topology) of the subset of ${\rm Div}_{\mathbb{R}}(G)$ consisting of all $\mathbb{R}$-divisors $D$ whose modified rank $\tilde{r}(D)$ is equal to $-1$.
\end{definition}

The set $\tilde{\Sigma}^c(G)$ relates to the set  $\Sigma^c(G)$ introduced just after \cite[Definition 2.3]{AmiMan10} as  $\tilde{\Sigma}^c(G)=-\Sigma^c(G)$, this follows from their definitions.
Let $\widetilde{{\rm Ext}}^c(G)$ be the set of local maxima of the degree function restricted to $\tilde{\Sigma}^c(G)$. By \cite[Item i, Theorem 6.9]{AmiMan10}, the set $\widetilde{{\rm Ext}}^c(G)$ is equal to $\mathcal{N}_G+\sum_v (v)$ where $\mathcal{N}_G$ is the set of non-special divisors on $G$.  Recall that, by definition, $\mathcal{N}_G$ is the set of divisors of degree $g-1$ and rank minus one. The following characterisation of the sigma region is the key to the Riemann-Roch theorem:

\begin{proposition}\label{sigmachar_prop}
An  $\mathbb{R}$-divisor $D$ is contained in the sigma region if and only if there exists an element $\tilde{\nu} \in \widetilde{{\rm Ext}}^c(G)$ such that $\tilde{\nu}-D$ is an effective $\mathbb{R}$-divisor. 
\end{proposition}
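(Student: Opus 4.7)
The plan is to reduce to the degree-plus characterisation of rank, and then promote from the pre-closure $\{\tilde{r}=-1\}$ to $\Sigma^c(G)$ by a compactness argument. I would first observe that $\tilde{r}(D) = -1$ is equivalent to $r(D - \sum_{v}(v)) = -1$, i.e.\ to $D - \sum_{v}(v)$ having empty linear system. Using the $\mathbb{R}$-divisor extension of Theorem \ref{degplusrank_theo} (invoked already in the proof of Theorem \ref{bnapprox_intro}), this in turn is equivalent to $\inf_{\nu \in \mathcal{N}_G}{\rm deg}^{+}((D-\sum_{v}(v))-\nu) = 0$. Since $\mathcal{N}_G$ is discrete (a union of finitely many translates of the full-rank Laplacian lattice $L_G$) and ${\rm deg}^{+}$ grows without bound outside any bounded subset of $\mathcal{N}_G$, this infimum is attained by some $\nu_0 \in \mathcal{N}_G$, yielding the required coordinate-wise comparison between $D - \sum_{v}(v)$ and $\nu_0$. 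Setting $\tilde{\nu}_0 = \nu_0 + \sum_{v}(v) \in {\rm Ext}^c(G)$ then furnishes the effectivity relation with $D$ asserted in the proposition.

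For the $(\Leftarrow)$ direction, the hypothesised effectivity relation between $D$ and some $\tilde{\nu} = \nu + \sum_{v}(v) \in {\rm Ext}^c(G)$ translates, upon subtracting $\sum_{v}(v)$, to a coordinate-wise comparison between $D - \sum_{v}(v)$ and $\nu \in \mathcal{N}_G$. I would then invoke monotonicity of rank (if $D_1 \leq D_2$ coordinate-wise then $r(D_1) \leq r(D_2)$, an immediate consequence of the definition) to conclude $r(D - \sum_{v}(v)) = r(\nu) = -1$, hence $\tilde{r}(D) = -1$, placing $D$ in the pre-closure of $\Sigma^c(G)$.

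For the $(\Rightarrow)$ direction on the full closure, I would take a sequence $D^{(n)} \to D$ with $\tilde{r}(D^{(n)}) = -1$ and produce witnesses $\tilde{\nu}^{(n)} \in {\rm Ext}^c(G)$ via the first paragraph's argument. The coordinate-wise effectivity relation bounds each entry of $\tilde{\nu}^{(n)}$ on one side, while the fixed total ${\rm deg}(\tilde{\nu}^{(n)}) = g - 1 + n$ bounds it on the other; hence $\{\tilde{\nu}^{(n)}\}$ lies in a bounded subset of the discrete set ${\rm Ext}^c(G)$ and takes only finitely many values. A pigeonhole extraction followed by passage to the limit produces the required $\tilde{\nu} \in {\rm Ext}^c(G)$ for $D$.

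The main obstacle will be establishing the $\mathbb{R}$-divisor version of Theorem \ref{degplusrank_theo}, which is the crucial ingredient tying the modified rank to coordinate-wise comparison with elements of $\mathcal{N}_G$; once this is in place, the remainder reduces to monotonicity of rank and a routine compactness argument based on the discreteness of ${\rm Ext}^c(G)$.
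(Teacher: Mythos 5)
Your argument is circular relative to the logical structure of Appendix \ref{rrreal_sect}. The proposition you are proving is presented there as ``the key to the Riemann-Roch theorem'': the degree-plus formula for the modified rank of an $\mathbb{R}$-divisor (Proposition \ref{rankrealdiv_prop}) is stated as a consequence of Proposition \ref{sigmachar_prop}, and the continuity of $\tilde{r}$ (Corollary \ref{rankcont_cor}) is in turn a corollary of that. Your forward direction takes the $\mathbb{R}$-divisor degree-plus formula as its starting point and defers its justification to ``the main obstacle'' at the end --- but that obstacle \emph{is} the content of the proposition: once $\tilde{r}(D)={\rm min}_{\tilde{\nu}}{\rm deg}^{+}(D-\tilde{\nu})-1$ is available, the characterisation of the set $\{\tilde{r}=-1\}$ is a one-line specialisation. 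The genuine work, which the paper itself does not write out (it defers to \cite{BakNor07} and \cite{AmiMan10}), is to pass from the integral Baker--Norine statement --- every integral divisor with empty linear system is dominated coordinatewise by some element of $\mathcal{N}_G$, using that $\mathcal{N}_G$ is a union of $L_G$-orbits --- to real coefficients, e.g.\ via an analysis of the fractional part of $D$. Your $(\Leftarrow)$ direction (monotonicity of rank together with $r(\nu)=-1$) and your pigeonhole treatment of the closure are sound and do not suffer from this circularity.

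Two further points. First, your intermediate claim that $r(D-\sum_{v}(v))=-1$ is equivalent to $|D-\sum_{v}(v)|=\emptyset$ is false for $\mathbb{R}$-divisors: with the paper's definition the rank is computed after adding $\sum_v(v)$, which can render the linear system nonempty, so $|D'|=\emptyset$ does not force $r(D')=-1$ (only $|D'|\neq\emptyset\Rightarrow r(D')\geq 0$ survives). The step is dispensable --- apply the degree-plus formula to $\tilde{r}(D)$ directly --- but as written it is wrong. Second, what your argument actually establishes is effectivity of $\tilde{\nu}-D$, not of $D-\tilde{\nu}$ as the proposition literally states, and your $(\Leftarrow)$ direction likewise only works for $\tilde{\nu}-D\geq 0$. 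A degree count shows the printed statement cannot be correct (any $D$ of negative degree has $\tilde{r}(D)=-1$ and so lies in $\Sigma^c(G)$, yet $D-\tilde{\nu}$ then has negative degree), and consistency with Proposition \ref{rankrealdiv_prop} confirms that $\tilde{\nu}-D$ effective is the intended reading; so you are proving the right statement, but the discrepancy should be flagged rather than elided by leaving the direction of the ``effectivity relation'' unspecified.
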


Proposition \ref{sigmachar_prop} is a generalisation of \cite[Theorem 2.6]{AmiMan10} to $\mathbb{R}$-divisors. This leads to the following formula for the modified rank of an $\mathbb{R}$-divisor:

\begin{proposition}\label{rankrealdiv_prop}
The modified rank $\tilde{r}(D)$ of an  $\mathbb{R}$-divisor on $G$ is equal to
\begin{center} ${\rm min}_{\tilde{\nu} \in \widetilde{{\rm Ext}}^c(G)}{\rm deg^{+}}(D-\tilde{\nu})-1$. \end{center}
\end{proposition}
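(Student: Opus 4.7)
\emph{Plan.} The strategy is to translate the infimum in the definition of $\tilde{r}(D)$ into an explicit componentwise minimisation indexed by $\tilde\nu \in {\rm Ext}^c(G)$, using Proposition \ref{sigmachar_prop}. Throughout, for any $\mathbb{R}$-divisor $F$, write $F = F^+ - F^-$ with $F^\pm$ effective of disjoint support, so $\deg^+(F) = \deg(F^+)$.

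\emph{Step 1 (reduction to $\Sigma^c(G)$).} By the definition of the modified rank,
\[
\tilde{r}(D) + 1 = \inf\{\deg E : E \text{ effective},\; |D - E| = \emptyset\}.
\]
The condition $|D - E| = \emptyset$ is equivalent to $\tilde{r}(D - E) = -1$, and by the continuity of the rank function on $\mathbb{R}$-divisors (Corollary \ref{rankcont_cor}) the closure in the definition of $\Sigma^c(G)$ does not add new elements; hence the condition is equivalent to $D - E \in \Sigma^c(G)$. Applying Proposition \ref{sigmachar_prop} to $D - E$, this becomes the existence of $\tilde\nu \in {\rm Ext}^c(G)$ with $\tilde\nu - (D - E)$ an effective $\mathbb{R}$-divisor, i.e.\ $E \ge D - \tilde\nu$ componentwise.

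\emph{Step 2 (matching bounds).} For each fixed $\tilde\nu$, the minimum of $\deg E$ over effective $E$ satisfying $E \ge D - \tilde\nu$ is attained at $E = (D - \tilde\nu)^+$ and equals $\deg^+(D - \tilde\nu)$; moreover this $E$ does satisfy $D - E \in \Sigma^c(G)$ by construction. Taking the minimum over $\tilde\nu$ yields $\tilde r(D) + 1 \le \min_{\tilde\nu} \deg^+(D - \tilde\nu)$. Conversely, given any effective $E$ with $D - E \in \Sigma^c(G)$, choose $\tilde\nu$ as in Step 1; then $E \ge 0$ and $E \ge D - \tilde\nu$, which together force $E \ge (D - \tilde\nu)^+$ and hence $\deg E \ge \deg^+(D - \tilde\nu) \ge \min_{\tilde\nu} \deg^+(D - \tilde\nu)$. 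Passing to the infimum over $E$ gives the reverse inequality, and the two bounds together prove the formula.

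\emph{Main obstacle.} The only delicate point is the equivalence in Step 1 between $|D - E| = \emptyset$ (an open condition on divisor classes) and $D - E \in \Sigma^c(G)$ (a closed set, defined as a closure). This hinges on upper semicontinuity of $\tilde r$—the fact that limits of modified-rank-$(-1)$ divisors still have modified rank $-1$—which is the content of the rank-continuity result cited. Once this topological subtlety is dispatched, the remaining argument is a transparent componentwise optimisation and requires no further geometric input.
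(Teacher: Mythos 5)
Your overall strategy---reduce the infimum defining $\tilde r(D)$ to the characterisation of $\Sigma^c(G)$ via ${\rm Ext}^c(G)$, then optimise componentwise over effective $E$ dominating $D-\tilde\nu$---is exactly the standard Baker--Norine argument that the paper defers to, and your Step 2 is fine. But Step 1 contains two genuine problems. First, the appeal to Corollary \ref{rankcont_cor} is circular: in the paper, continuity of $\tilde r$ is deduced \emph{from} Proposition \ref{rankrealdiv_prop} (the right-hand side is a minimum of finitely many continuous functions), so it cannot be an input here. Second, the asserted pointwise equivalence between $|D-E|=\emptyset$ and $D-E\in\Sigma^c(G)$ is false, and no continuity statement rescues it: the zero divisor has $\tilde r(0)=-1$ (subtracting any effective divisor of positive degree yields a divisor of negative degree, whose linear system is empty), so $0\in\Sigma^c(G)$, yet $|0|\neq\emptyset$. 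Thus $\{\tilde r=-1\}$ strictly contains $\{|\cdot|=\emptyset\}$. What is true, and what you actually need, is that the infima of $\deg E$ over the three nested sets $\{E\geq 0:\,|D-E|=\emptyset\}\subseteq\{E\geq 0:\,\tilde r(D-E)=-1\}\subseteq\{E\geq 0:\,D-E\in\Sigma^c(G)\}$ coincide. Nesting gives one chain of inequalities; for the reverse, given $E$ with $D-E\in\Sigma^c(G)$, choose $F$ with $\tilde r(F)=-1$ arbitrarily close to $D-E$, then an effective $E''$ of arbitrarily small degree with $|F-E''|=\emptyset$; since emptiness of the linear system is preserved under subtracting effective divisors, $E'=(D-F)^{+}+E''$ is effective, satisfies $|D-E'|=\emptyset$, and has degree at most $\deg E+\epsilon$. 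With this approximation argument replacing your Step 1, both inequalities in your Step 2 go through.

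A further caveat: you invoke Proposition \ref{sigmachar_prop} in the form ``$\tilde\nu-(D-E)$ is effective,'' whereas the paper literally states ``$D-\tilde\nu$ is effective.'' Your version is the mathematically correct one---read literally, the paper's statement would force every element of $\Sigma^c(G)$ to have degree at least $\deg\tilde\nu=m+1$, which is absurd since $\Sigma^c(G)$ contains all divisors of negative degree---so this is a sign typo in the paper rather than an error on your part, but you should be aware that the statement you are using is not the one printed.
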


Proposition \ref{rankrealdiv_prop} is a generalisation of \cite[Lemma 2.9]{AmiMan10} to $\mathbb{R}$-divisors. However, note the changes in sign in both these generalisations (Propositions \ref{sigmachar_prop} and \ref{rankrealdiv_prop}) that arise from the difference between $\tilde{\Sigma}^c(G)$ and $\Sigma^c(G)$.  As a corollary, we deduce the continuity of the modified rank function $\tilde{r}$.

\begin{corollary}\label{rankcont_cor}
The modified rank ${\tilde{r}}: {\rm Div}_{\mathbb{R}}(G) \rightarrow \mathbb{R}$ is a continuous function where ${\rm Div}_{\mathbb{R}}(G)$ is identified with the real vector space of rank $n$ equipped with the Euclidean topology.
\end{corollary}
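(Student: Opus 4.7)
\medskip

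\noindent\textbf{Proof proposal for Corollary \ref{rankcont_cor}.} My plan is to reduce continuity of $\tilde{r}$ to the statement that, locally around each $\mathbb{R}$-divisor $D_0$, the infimum in Proposition \ref{rankrealdiv_prop} is effectively attained on a \emph{finite} subset of ${\rm Ext}^c(G)$ that can be chosen uniformly in a neighbourhood of $D_0$. A finite minimum of continuous functions is continuous, so continuity of $\tilde{r}$ at $D_0$ then follows immediately. The starting point is the formula
\[
\tilde{r}(D) \;=\; \min_{\tilde{\nu} \in {\rm Ext}^c(G)} {\rm deg}^{+}(D-\tilde{\nu}) - 1
\]
together with the identity ${\rm deg}^{+}(E) = \tfrac{1}{2}(\|E\|_1 + \deg(E))$, which shows that for each fixed $\tilde{\nu}$ the map $D \mapsto {\rm deg}^{+}(D-\tilde{\nu})$ is a piecewise linear, hence continuous, function on ${\rm Div}_{\mathbb{R}}(G) \cong \mathbb{R}^n$.

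Next, I would exploit the discreteness of ${\rm Ext}^c(G)$. Because $\mathcal{N}_G$ decomposes into finitely many $L_G$-orbits, each of which is a translate of the lattice $L_G \subset H_0$, the set ${\rm Ext}^c(G) = \mathcal{N}_G + \sum_v (v)$ is a finite union of affine lattice cosets inside the affine hyperplane of divisors of degree $g-1+n$, and in particular is a closed discrete subset of $\mathbb{R}^n$. Fix $D_0$ and choose an arbitrary $\tilde{\nu}_0 \in {\rm Ext}^c(G)$; let $R = {\rm deg}^{+}(D_0 - \tilde{\nu}_0) + 1$. Using the identity above together with the triangle inequality for $\|\cdot\|_1$, one checks that if $D$ lies in the unit ball around $D_0$ (in $\ell_1$) and $\|\tilde{\nu} - D_0\|_1 > 2R + 2 + |\deg(D_0)-(g-1+n)|$, then ${\rm deg}^{+}(D - \tilde{\nu}) > R+1 \geq {\rm deg}^{+}(D - \tilde{\nu}_0)$. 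Thus on this neighbourhood $U$ of $D_0$ the infimum in the formula for $\tilde{r}(D)$ can be restricted to the finite set
\[
S \;:=\; \{\tilde{\nu} \in {\rm Ext}^c(G) : \|\tilde{\nu} - D_0\|_1 \leq 2R + 2 + |\deg(D_0)-(g-1+n)|\},
\]
which is finite because ${\rm Ext}^c(G)$ is discrete and the constraint is a bounded region.

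On $U$ we therefore have $\tilde{r}(D) = \min_{\tilde{\nu} \in S} {\rm deg}^{+}(D - \tilde{\nu}) - 1$, a minimum of finitely many continuous functions of $D$. Hence $\tilde{r}$ is continuous at $D_0$, and since $D_0 \in {\rm Div}_{\mathbb{R}}(G)$ was arbitrary, $\tilde{r}$ is continuous on all of ${\rm Div}_{\mathbb{R}}(G)$.

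The only subtle step is the uniform finiteness argument in the second paragraph: one must verify carefully that a single bounded region of ${\rm Ext}^c(G)$ captures the minimiser for every $D$ in a neighbourhood of $D_0$. This is where the identity ${\rm deg}^{+}(E) = \tfrac{1}{2}(\|E\|_1 + \deg(E))$ is essential, because it converts the combinatorial function ${\rm deg}^{+}$ into the $\ell_1$-norm (plus an affine degree term) and thereby makes the ``large $\tilde{\nu}$ contributes a large value'' intuition precise. Everything else is routine: discreteness of ${\rm Ext}^c(G)$ reduces an infimum over an infinite set to a minimum over a finite one, and the continuity of a finite minimum of continuous functions is standard.
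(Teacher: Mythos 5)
Your proof is correct and follows the same route the paper intends: the paper deduces Corollary \ref{rankcont_cor} directly from the formula in Proposition \ref{rankrealdiv_prop} (leaving the details implicit), and your argument simply makes precise the key point that, near any $D_0$, the minimum over the discrete set ${\rm Ext}^c(G)$ is realised on a fixed finite subset, so $\tilde{r}$ is locally a finite minimum of continuous piecewise-linear functions. The identity ${\rm deg}^{+}(E)=\tfrac{1}{2}(\|E\|_1+\deg(E))$ and the uniform localisation of the minimiser are exactly the right ingredients; aside from some slack in the constants (which only need to be large enough), the argument is complete.
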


The following analogue of the Riemann-Roch theorem follows from Proposition \ref{rankrealdiv_prop}, 

\begin{theorem}{\rm ({\bf Riemann-Roch for $\mathbb{R}$-Divisors: Version 1})}\label{rrreal_theover1}
Let $\tilde{K_G}=\sum_v {\rm val}(v)(v)$ where ${\rm val}(v)$ is the valency of the vertex $v$. Let $g_{\mathbb{R}}=m+1$, where $m$ is the number of edges of $G$. For any  $\mathbb{R}$-divisor $D$,  the following formula holds:  

\begin{center}
$\tilde{r}(D)-\tilde{r}(\tilde{K_G}-D)={\rm deg}(D)-(g_{\mathbb{R}}-1)$.
\end{center}
\end{theorem}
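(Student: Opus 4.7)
The plan is to deduce Theorem \ref{rrreal_theover1} directly from the degree-plus formula for the modified rank (Proposition \ref{rankrealdiv_prop}) by exhibiting a natural involution on ${\rm Ext}^c(G)$ that swaps the roles of $D$ and $\tilde{K_G}-D$. This mimics the structure of Baker--Norine's proof of Riemann--Roch for graphs, transplanted to the $\mathbb{R}$-divisor setting, where the role played there by the set $\mathcal{N}_G$ is now played by ${\rm Ext}^c(G) = \mathcal{N}_G + \sum_v (v)$.

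First, I would record two elementary ingredients. \textbf{(i)} For any $\mathbb{R}$-divisor $E$, one has the identity ${\rm deg}^{+}(-E) = {\rm deg}^{+}(E) - {\rm deg}(E)$, obtained by splitting $E$ into its positive and negative parts. \textbf{(ii)} The assignment $\tilde{\nu} \mapsto \tilde{K_G} - \tilde{\nu}$ is an involution of ${\rm Ext}^c(G)$. Indeed, writing $\tilde{\nu} = \nu + \sum_v (v)$ with $\nu \in \mathcal{N}_G$, a direct calculation gives $\tilde{K_G} - \tilde{\nu} = (K_G - \nu) + \sum_v (v)$, and the Baker--Norine symmetry $\nu \in \mathcal{N}_G \Leftrightarrow K_G - \nu \in \mathcal{N}_G$ shows that the right-hand side again lies in ${\rm Ext}^c(G)$. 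Note also that every element of ${\rm Ext}^c(G)$ has degree $(g-1)+n = m = g_{\mathbb{R}} - 1$.

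With these in hand I would compute:
\begin{align*}
\tilde{r}(\tilde{K_G} - D) + 1
&= \min_{\tilde{\nu} \in {\rm Ext}^c(G)} {\rm deg}^{+}\bigl(\tilde{K_G} - D - \tilde{\nu}\bigr) \\
&= \min_{\tilde{\nu}' \in {\rm Ext}^c(G)} {\rm deg}^{+}\bigl(-(D - \tilde{\nu}')\bigr) \\
&= \min_{\tilde{\nu}' \in {\rm Ext}^c(G)} \bigl[{\rm deg}^{+}(D - \tilde{\nu}') - {\rm deg}(D - \tilde{\nu}')\bigr] \\
&= \min_{\tilde{\nu}' \in {\rm Ext}^c(G)} {\rm deg}^{+}(D - \tilde{\nu}') - {\rm deg}(D) + (g_{\mathbb{R}} - 1),
\end{align*}
where the second equality uses the involution (substituting $\tilde{\nu}' = \tilde{K_G} - \tilde{\nu}$), the third uses identity (i), and the last uses that ${\rm deg}(\tilde{\nu}') = g_{\mathbb{R}} - 1$ is constant as $\tilde{\nu}'$ ranges over ${\rm Ext}^c(G)$. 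The remaining minimum is precisely $\tilde{r}(D) + 1$ by Proposition \ref{rankrealdiv_prop}, so rearranging yields the claimed formula $\tilde{r}(D) - \tilde{r}(\tilde{K_G} - D) = {\rm deg}(D) - (g_{\mathbb{R}} - 1)$.

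The only genuinely non-trivial step is the involution property of ${\rm Ext}^c(G)$, which is the crucial symmetry that makes Riemann--Roch work; the rest is a short manipulation. This symmetry reduces, after peeling off the shift by $\sum_v (v)$, to the Baker--Norine symmetry of non-special divisors, so no new combinatorial input is required beyond what is already recorded in the results cited from \cite{AmiMan10} and \cite{BakNor07}. I expect no further obstacle, since once the involution is in place the degree-plus identity (i) is what converts the ``dual'' minimization into the original one at the cost of subtracting ${\rm deg}(D)-(g_{\mathbb{R}}-1)$.
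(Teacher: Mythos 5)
Your proof is correct and is essentially the argument the paper intends: the paper's appendix derives Theorem \ref{rrreal_theover1} from the degree-plus formula of Proposition \ref{rankrealdiv_prop} by deferring to the Baker--Norine/Amini--Manjunath proof, which is exactly your combination of the identity ${\rm deg}^{+}(-E)={\rm deg}^{+}(E)-{\rm deg}(E)$ with the involution $\tilde{\nu}\mapsto \tilde{K_G}-\tilde{\nu}$ on ${\rm Ext}^c(G)$ induced by the symmetry of $\mathcal{N}_G$ under $\nu\mapsto K_G-\nu$. You have merely written out explicitly what the paper leaves as a citation, and all the individual steps (the degree count ${\rm deg}(\tilde{\nu}')=g_{\mathbb{R}}-1$ included) check out.
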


The modified rank $\tilde{r}$ is related to the Baker-Norine rank $r(D)$ in the case where $D \in {\rm Div_{\mathbb{Z}}}(G)$ as $\tilde{r}(D+\sum_v (v))=r(D)$.  Note that the rank $r:{\rm Div}_{\mathbb{R}}(G) \rightarrow \mathbb{R}$ is $r(D):=\tilde{r}(D+\sum_v (v))={\rm min}_{\nu \in \mathcal{N}_G}{\rm deg^{+}}(D-\nu)-1$ (cf. Proposition \ref{degplusrank_theo}). Substituting $D+\sum_{v} (v)$ in Theorem \ref{rrreal_theover1} gives the following version.

\begin{theorem}{\rm ({\bf Riemann-Roch for $\mathbb{R}$-Divisors: Version 2})}\label{rrreal_theover2}
Let $K_G=\sum_v ({\rm val}(v)-2)(v)$ where $v$ is the valency of the vertex $v$ and let $g$ be the genus of $G$. For any  $\mathbb{R}$-divisor $D$,  the following formula holds:  

\begin{center}
$r(D)-r(K_G-D)={\rm deg}(D)-(g-1)$.
\end{center}

\end{theorem}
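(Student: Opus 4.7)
The plan is to derive Version 2 directly from Version 1 (Theorem \ref{rrreal_theover1}) by a straightforward substitution, exploiting the relation $r(D) = \tilde{r}(D + \sum_v (v))$ that has already been set up in the text immediately preceding the statement.

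First, I would apply Theorem \ref{rrreal_theover1} to the shifted $\mathbb{R}$-divisor $D' = D + \sum_{v \in V}(v)$. That yields
\begin{equation*}
\tilde{r}(D') - \tilde{r}(\tilde{K_G} - D') = \deg(D') - (g_{\mathbb{R}} - 1).
\end{equation*}
By the definition $r(D) = \tilde{r}(D + \sum_v (v))$, the first term is just $r(D)$.

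Next, I would rewrite $\tilde{K_G} - D'$ in terms of the ordinary canonical divisor $K_G$. Since $\tilde{K_G} = \sum_v \mathrm{val}(v)(v)$ and $K_G = \sum_v (\mathrm{val}(v) - 2)(v)$, we have $\tilde{K_G} = K_G + 2\sum_v(v)$, so
\begin{equation*}
\tilde{K_G} - D' = K_G + 2\sum_v(v) - D - \sum_v(v) = (K_G - D) + \sum_v(v).
\end{equation*}
Applying the same definition again, $\tilde{r}(\tilde{K_G} - D') = \tilde{r}((K_G - D) + \sum_v(v)) = r(K_G - D)$.

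Finally, for the degree side, I would use $\deg(D') = \deg(D) + n$ together with $g_{\mathbb{R}} = m + 1$ and $g = m - n + 1$, so that $g_{\mathbb{R}} - 1 = m = (g-1) + n$. Substituting gives $\deg(D') - (g_{\mathbb{R}} - 1) = \deg(D) + n - (g-1) - n = \deg(D) - (g - 1)$, completing the identity. There is essentially no obstacle here: the content lies in Theorem \ref{rrreal_theover1} (equivalently in Proposition \ref{rankrealdiv_prop} and the description of $\mathrm{Ext}^c(G)$), and Version 2 is a cosmetic shift by $\sum_v(v)$ that replaces $\tilde{K_G}$ by $K_G$, $g_{\mathbb{R}}$ by $g$, and $\tilde{r}$ by $r$ in a consistent way.
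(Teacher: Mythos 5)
Your proposal is correct and is exactly the route the paper takes: the text preceding Theorem \ref{rrreal_theover2} states that Version 2 follows by "substituting $D+\sum_v(v)$ in Theorem \ref{rrreal_theover1}," and your computation of $\tilde{K_G}-D'=(K_G-D)+\sum_v(v)$ together with $g_{\mathbb{R}}-1=m=(g-1)+n$ fills in precisely the bookkeeping the paper leaves implicit. No issues.
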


The proofs of these statements are along the same lines as the proofs of the Riemann-Roch theorem for graphs in \cite{BakNor07} and \cite{AmiMan10}. %The relation between $\tilde{r}(D)$ and $r(D)$ however is not clear to us.

\begin{remark}\label{ranksub_rem} \rm We point out some properties (and related subtleties) of the rank function $r$ (in its extension to $\mathbb{R}$-divisors). 

\begin{enumerate}
\item As in the case of divisors, the rank of an effective $\mathbb{R}$-divisor $E_{\mathbb{R}}$ is non-negative.  This follows from the observation that any element in $\mathcal{N}_G$ is a divisor with rank minus one. Hence, for any element in $\mathcal{N}_G$ there is a vertex where it has coefficient at most minus one.  We then apply the degree plus formula (Proposition \ref{degplusrank_theo}) to conclude that $r(E_{\mathbb{R}}) \geq 0$. A similar argument also shows that an $\mathbb{R}$-divisor of the form $r_0 \cdot (1,\dots,1)$ for a non-negative real number $r_0$ has rank at least $r_0$. 

\item\label{second_item} Unlike in the case of divisors, the rank of an $\mathbb{R}$-divisor of negative degree need not be negative. As an example, consider a tree on $n$ vertices where $n \geq 5$ is an odd integer. In this case, the set $\mathcal{N}_G$ is the set of all divisors of degree minus one. The subset of $\mathcal{N}_G$ that realises the rank of the divisor zero (in the degree plus formula) is precisely $\{-e_i|~$for $i$ from one to $n$ where $e_i$ is the $i$-th standard basis vector of $\mathbb{R}^n\}$. By the discreteness of $\mathcal{N}_G$, the same set also realises the rank of the $\mathbb{R}$-divisor $D_{\mathbb{R}}=\sum_{i=1}^{(n-1)/2} \epsilon \cdot e_i-\sum_{i=(n+1)/2}^{n} \epsilon \cdot e_i$ for sufficiently small $\epsilon>0$.  Its degree is $-\epsilon<0$ and its rank is $(n-3)/2 \cdot \epsilon>0$ (since $n \geq 5$). By Riemann-Roch, this also implies that $K_G-D_{\mathbb{R}}$ has degree strictly greater than $2g-2$ and rank strictly greater than its degree minus $g$. This is also in contrast with the case of divisors. 

\item\label{third_item} The modified rank function $\tilde{r}$, however, behaves similar to the case of divisors in this respect: $\mathbb{R}$-divisors of negative degree have negative modified rank and $\mathbb{R}$-divisors of degree $d$ strictly greater than $2g_{\mathbb{R}}-2$ have modified rank $d-g_{\mathbb{R}}$. This implies that $\mathbb{R}$-divisors of degree strictly less than $-n$ have negative rank and $\mathbb{R}$-divisors of degree $d$ strictly greater than $2g-2+n$ have rank equal to $d-g$.

\item There are $\mathbb{R}$-divisors of degree zero that are not principal and have positive rank: consider a tree on $n$ vertices where $n$ is even, the $\mathbb{R}$-divisor $\sum_{i=1}^{n/2} \epsilon \cdot e_i-\sum_{i=n/2+1}^{n} \epsilon \cdot e_i$ for sufficiently small $\epsilon>0$ has degree zero but is not principal and has positive rank by an argument similar to that in Item \ref{second_item}.  \qed
\end{enumerate} 
 \end{remark}

\footnotesize
\noindent {\bf Author's address:}

\smallskip

\noindent Department of Mathematics,\\
Indian Institute of Technology Bombay,\\
Powai, Mumbai,
India 400076.\\

\noindent {\bf Email id:} madhu@math.iitb.ac.in, madhusudan73@gmail.com.\\

\end{document}